\newtheorem{theorem}{Theorem}[section]
\newtheorem{thm}{Theorem}[section]
\newtheorem{lem}[theorem]{Lemma}
\newtheorem{prop}[theorem]{Proposition}
\newtheorem{convention}[theorem]{Convention}
\theoremstyle{definition}
\newtheorem{defn}[theorem]{Definition}
\newtheorem{example}[theorem]{Example}
\newtheorem{cor}[theorem]{Corollary}
\theoremstyle{remark}
\newtheorem{remark}[theorem]{Remark}
\numberwithin{equation}{section}
\newcommand\NN{\mathbb N}
\newcommand\RR{\mathbb R}
\newcommand\ZZ{\mathbb Z}
\newcommand\cA{\mathcal{A}}
\def\cc{\curvearrowright}
\def\fh{{\mathfrak{v}}}
\def\sW{{\mathscr{W}}}
\def\sX{{\mathscr{X}}}
\def\sY{{\mathscr{Y}}}
\def\sZ{{\mathscr{Z}}}
\def\N{{\mathbb{N}}}
\begin{document}
\title{The type and stable type of the boundary of a Gromov hyperbolic group}

\author{Lewis Bowen\footnote{supported in part by NSF grant DMS-0968762, NSF CAREER Award DMS-0954606 and BSF grant 2008274} }







\maketitle

\begin{abstract}
Consider an ergodic non-singular action $\Gamma \cc B$ of a countable group on a probability space. The type of this action codes the asymptotic range of the Radon-Nikodym derivative, also called the {\em ratio set}. If $\Gamma \cc X$ is a pmp (probability-measure-preserving) action, then the ratio set of the product action $\Gamma \cc B\times X$ is contained in the ratio set of $\Gamma \cc B$. So we define the {\em stable ratio set} of $\Gamma \cc B$ to be the intersection over all pmp actions $\Gamma \cc X$ of the ratio sets of $\Gamma \cc B\times X$. By analogy, there is a notion of {\em stable type} which codes the stable ratio set of $\Gamma \cc B$. This concept is crucially important for the identification of the limit in pointwise ergodic theorems established by the author and Amos Nevo. 

Here, we establish a general criteria for a nonsingular action of a countable group on a probability space to have stable type $III_\lambda$ for some $\lambda >0$. This is applied to show that the action of a non-elementary Gromov hyperbolic group on its boundary with respect to a quasi-conformal measure is not type $III_0$ and, if it is weakly mixing, then it is not stable type $III_0$. 
\end{abstract}

\tableofcontents

\section{Introduction}

\subsection{Ratio set}
Let $\Gamma$ be a countable group and $\Gamma \cc (B,\nu)$ a nonsingular action on a standard probability space. The {\em ratio set} of the action, denoted $RS(\Gamma \cc (B,\nu)) \subset [0,\infty]$, is defined as follows. A real number $r\ge 0$ is in $RS(\Gamma \cc (B,\nu)$ if and only if for every positive measure set $A \subset B$ and $\epsilon>0$ there is a subset $A' \subset A$ of positive measure and an element $g\in \Gamma \setminus\{e\}$ such that 
\begin{itemize}
\item $gA' \subset A$,
\item $| \frac{d\nu\circ g}{d\nu}(b)-r| < \epsilon$ for every $b \in A'$.
\end{itemize}
The extended real number $+\infty \in RS(\Gamma \cc (B,\nu))$ if and only if for every positive measure set $A \subset B$ and $n>0$ there is a subset $A' \subset A$ of positive measure and an element $g\in \Gamma \setminus\{e\}$ such that 
\begin{itemize}
\item $gA' \subset A$,
\item $ \frac{d\nu\circ g }{d\nu}(b) > n$ for every $b \in A'$.
\end{itemize}
The ratio set is also called the {\em asymptotic range} or {\em asymptotic ratio set}. By Proposition 8.5 of \cite{FM77}, if the action $\Gamma \cc (B,\nu)$ is ergodic then $RS(\Gamma \cc (B,\nu))$ is a closed subset of $[0,\infty]$. Moreover, $RS(\Gamma \cc (B,\nu)) \setminus \{0,\infty\}$ is a multiplicative subgroup of $\RR_{>0}$. Since 
$$\frac{d\nu\circ g^{-1}}{d\nu}(gb) = \left( \frac{d\nu\circ g}{d\nu}(b) \right)^{-1},$$
the number $0$ is in the ratio set if and only if $\infty$ is in the ratio set. So if $\Gamma \cc (B,\nu)$ is ergodic and non-atomic then the possibilities for the ratio set and the corresponding type classification are:
\begin{displaymath}
\begin{array}{c|c}
\textrm{ ratio set } & \textrm{ type }\\
\hline
\{1\} & II\\
\{0,1,\infty\} & III_0\\
\{0,\lambda^n, \infty: ~n\in \ZZ\} & III_\lambda\\
 \lbrack 0,\infty \rbrack & III_1
 \end{array}\end{displaymath}
 where $\lambda \in (0,1)$. For a very readable review, see \cite{KW91}. Section 8 of \cite{FM77} discusses these concepts in the more general setting of cocycles taking values in an arbitrary locally compact group. 

\subsection{Stable ratio set}
Observe that if $\Gamma \cc (X,\mu)$ is a probability-measure-preserving (pmp) action then the ratio set of the product action satisfies $RS(\Gamma \cc (B\times X, \nu \times \mu)) \subset RS(\Gamma \cc (B,\nu))$. Therefore, it makes sense to define the {\em stable} ratio set of $\Gamma \cc (B,\nu)$ by 
$$SRS(\Gamma \cc (B,\nu)) = \bigcap RS(\Gamma \cc ( B\times X,\nu \times \mu))$$
where the intersection is over all pmp actions $G \cc (X,\mu)$. 

We say that $\Gamma \cc (B,\nu)$ is {\em weakly mixing} if for any ergodic pmp action $\Gamma \cc (X,\mu)$, the product action $\Gamma \cc (B\times X,\nu\times\mu)$ is ergodic. In this case, $SRS(\Gamma \cc (B,\nu))$ is a closed subset of $[0,\infty]$ and $SRS(\Gamma \cc (B,\nu)) \setminus \{0,\infty\}$ is a multiplicative subgroup of $\RR_{>0}$. So the possibilities for the stable ratio set are the same as those for the ratio set and we define the stable type according. For example, if $SRS(\Gamma \cc (B,\nu)) =\{0,\lambda^n, +\infty: ~n\in \ZZ\}$ for some $\lambda \in (0,1)$ then we say the stable type of the action is $III_\lambda$.

\subsection{The main result}

The main result of this paper is:
\begin{thm}\label{thm:main}
Let $(\Gamma,d)$ be a non-elementary uniformly quasi-geodesic Gromov hyperbolic group. Then the stable ratio set of the action of $\Gamma$ on its Gromov boundary with respect to any quasi-conformal measure contains an element of the form $e^T$ for some $0<T<\infty$. Therefore, it does not have type $III_0$. If it is weakly mixing then it is not stable type $III_0$.
\end{thm}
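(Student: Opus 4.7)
My plan is to apply the \emph{general criterion} (developed earlier in this paper) that reduces membership in the stable ratio set to an explicit recurrence/distortion property of the base action. That property will be verified for the boundary action using two ingredients: the pointwise description of the Radon–Nikodym derivative provided by quasi-conformality, and the north–south dynamics of a loxodromic element.

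Quasi-conformality of $\nu$ of dimension $D$ yields a Busemann cocycle $\beta_\xi(\cdot,\cdot)$ and a uniformly bounded additive error so that
\[
\log \frac{d\nu \circ g}{d\nu}(\xi) = D\,\beta_\xi(g^{-1}, e) + O(1)
\]
uniformly in $g \in \Gamma$ and $\xi \in \partial \Gamma$. Since $\Gamma$ is non-elementary, pick a loxodromic $h \in \Gamma$ with fixed points $\xi^\pm$ and translation length $\tau > 0$. For fixed $m$, a standard comparison between the geodesic ray towards $\xi^+$ and the axis of $h$ shows that $\beta_\xi(h^{-m}, e) = m\tau + o_m(1)$ uniformly on shrinking neighborhoods of $\xi^+$, so $\frac{d\nu \circ h^m}{d\nu}(\xi) = e^{-Dm\tau}(1+o(1))$ on such neighborhoods. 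Fix $m$ large and set $T := Dm\tau$; the target will be $e^{-T} \in SRS$, from which $e^T \in SRS$ follows by the inversion symmetry of the ratio set.

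To verify the criterion, given $\epsilon > 0$ and arbitrary $A \subset \partial \Gamma$ of positive measure, I will exhibit $g \in \Gamma$ and $A' \subset A$ with $gA' \subset A$, $\nu(A')>0$, and $\frac{d\nu \circ g}{d\nu}$ within $\epsilon$ of $e^{-T}$ on $A'$. Quasi-conformal measures are doubling in any visual metric on $\partial \Gamma$, so a Lebesgue density theorem holds; pick a $\nu$-density point $\xi_0 \in A$. Using density of the orbit $\Gamma \xi^+$ (a standard consequence of non-elementarity), find $\gamma \in \Gamma$ with $\gamma \xi^+$ close to $\xi_0$, and replace $h$ by $h_\gamma := \gamma h \gamma^{-1}$, whose translation length is still $\tau$ and whose attracting fixed point is $\gamma \xi^+$. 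Choose a small neighborhood $U$ of $\gamma \xi^+$ small enough that (i) $h_\gamma^m U \subset U$, (ii) the Busemann estimate above (transported by $\gamma$) localizes $\frac{d\nu \circ h_\gamma^m}{d\nu}$ to within $\epsilon$ of $e^{-T}$ on $U$, and (iii) $\nu(A \cap U)/\nu(U) > 1-\epsilon'$ by density. Then $A' := (A \cap U) \cap h_\gamma^{-m}(A)$ has $\nu(A') > 0$ (by attraction and density), satisfies $h_\gamma^m A' \subset A$, and carries the right Radon–Nikodym derivative by (ii).

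The principal obstacle is supplying the quantitative uniformity needed so that the general criterion delivers membership in $SRS$ rather than merely $RS$. I expect the criterion to require that $g$ be available with arbitrarily large word length and that the ratio $\nu(A')/\nu(A)$ admit a lower bound independent of $A$; both can be arranged by varying $m$ and $\gamma$ in the construction above, since shrinking $U$ while enlarging $m$ preserves (i)–(iii) and forces $|g|$ to infinity. Translating these statements into the precise hypotheses of the criterion—so that taking products with an arbitrary pmp action does not corrupt the estimate—is the technical heart of the argument. Once $e^{\pm T} \in SRS \subseteq RS$ is established, the action is not of type $III_0$ because $RS$ is a closed multiplicative subgroup (mod $\{0,\infty\}$) and $e^T \ne 1$; under weak mixing, $SRS$ has the same subgroup structure, so the stable type is likewise not $III_0$.
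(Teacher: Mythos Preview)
Your argument has a genuine gap at exactly the point you flag as the ``principal obstacle'': the passage from the ratio set to the \emph{stable} ratio set. What you describe as ``the general criterion'' is simply the definition of $e^{-T}\in RS(\Gamma\cc(\partial\Gamma,\nu))$, and your north--south construction indeed gives that (modulo the $O(1)$ quasi-conformality error, which prevents you from naming a specific $T$ but still yields some nontrivial $T$ by compactness). However, the paper's actual criterion, Theorem~\ref{thm:AvEP}, is not of this form at all: it requires an \emph{admissible family} of kernels $\Upsilon_n:\Gamma\times B\times B\to[0,1]$ satisfying the integral bounds of Definition~\ref{defn:zeta}. The whole point of that machinery is that such a family can be extended to $\Gamma\times(B\times K)\times(B\times K)$ for any pmp $\Gamma\cc(K,\kappa)$ (this is the content of the proof of Theorem~\ref{thm:AvEP} via Proposition~\ref{prop:model}), which is what delivers membership in $SRS$.

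Your loxodromic approach cannot be made to fit this framework, and more seriously it fails outright for $SRS$. Given an arbitrary ergodic pmp action $\Gamma\cc(X,\mu)$ and a positive-measure $A\subset\partial\Gamma\times X$, your construction produces a \emph{single} element $g=h_\gamma^m$ whose action on the $\partial\Gamma$ factor is controlled near $\gamma\xi^+$; but its action on the $X$ factor is a fixed, unknown pmp transformation, and there is no reason $g(A\cap(U\times V))$ should meet $A$ in positive measure. The suggested fixes (large $|g|$, lower bounds on $\nu(A')/\nu(A)$) do not address this: no amount of control on the boundary factor constrains what $g$ does to the $X$-slice. The paper's construction (Lemma~\ref{lem:admissible}) avoids this by using, for each $\xi$, an exponentially large set $Y_n(\xi)$ of group elements rather than a single loxodromic power; the averaging over $Y_n(\xi)$ is what makes the admissible-family bounds (item~4 of Definition~\ref{defn:zeta}) hold and hence survive the product with $(X,\mu)$.
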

See \S \ref{sec:Gromov} for the definitions of uniformly quasi-geodesic Gromov hyperbolic group, Gromov boundary and quasi-conformal measure. It appears to be unknown whether the action of $\Gamma$ on its boundary is always weakly mixing. 

By comparison, in \cite{INO08} it is proven that the Poisson boundary of a random walk on a Gromov hyperbolic group induced by a nondegenerate measure on $\Gamma$ of finite support is never of type $III_0$. In \cite{Su78, Su82}, Sullivan proved that the recurrent part of an action of a discrete conformal group on the sphere $\mathbb{S}^d$ relative to the Lebesgue measure is type $III_1$. Spatzier \cite{Sp87} showed that if $\Gamma$ is the fundamental group of a compact connected negatively curved manifold then the action of $\Gamma$ on the sphere at infinity of the universal cover is also of $III_1$. The types of harmonic measures on free groups were computed by Ramagge and Robertson \cite{RR97} and Okayasu \cite{Ok03}. 

In \cite{BN2} it shown that if $\Gamma$ is an irreducible lattice in a connected semisimple Lie group $G$ which has trivial center and no compact factors, then the action of $\Gamma$ on $(G/P,\nu)$ is stable type $III_1$ (where $P<G$ is a minimal parabolic subgroup and $\nu$ is a probability measure in the unique $G$-invariant measure class). I do not know of any other results on stable type.

\subsection{The Maharam extension and ergodic theorems}

The main reason for the interest in type and stable type is because of its connection with the  {\em Maharam extension} (of $\Gamma \cc (B,\nu)$) which is  the action of $\Gamma$ on $B\times \RR$ given by
$$g(b,t) = \left(gb, t - \log\left(\frac{d\nu\circ g}{d\nu}(b)\right)\right).$$
This action preserves the measure $\nu\times \theta$ where $\theta$ is the measure on $\RR$ given by $d\theta(t)=e^t dt$. Even if $\Gamma \cc (B,\nu)$ is ergodic, the action $\Gamma \cc (B\times \RR,\nu\times\theta)$ might not be ergodic. The type of an action quantifies how ergodic its Maharam extension is.

To be precise, the real line also acts on $B\times \RR$ by $s(b,t)=(b,t+s)$. This action does not preserve $\nu\times \theta$ but it does preserve its measure class. Moreover this action commutes with the $\Gamma$-action. Therefore, it descends to an action on the space $(Z,\zeta)$ of ergodic components of $\Gamma \cc (B\times \RR,\nu\times\theta)$, also called the {\em Mackey range} of the Radon-Nikodym cocycle. The action $\RR \cc (Z,\zeta)$ is trivial if and only if $\Gamma \cc (B,\nu)$ has type $III_1$. $\RR \cc (Z,\zeta)$ is equivalent to the action of $\RR$ on the circle $\RR/(-\log \lambda) \ZZ$  for some $\lambda \in (0,1)$ if and only if the type is $III_\lambda$. In all other cases, the type is $III_0$ (assuming $\Gamma \cc (B,\nu)$ is ergodic). 

Stable type was introduced in recent work \cite{BN2} (see also \cite{BN2b}) which proves general pointwise ergodic theorems for pmp actions of a countable group $\Gamma$. Roughly speaking, the idea is to consider an amenable action $\Gamma \cc (B,\nu)$. From a F\o lner sequence for a reduction of the Maharam extension of this action, one can construct a pointwise convergent sequence of probability measures on $\Gamma$. That is, a sequence $\{\pi_i\}_{i=1}^\infty$ of probability measures on $\Gamma$ is constructed such that if $\Gamma \cc (X,\mu)$ is any pmp action and $f \in L^p(X,\mu)$ (for $p>1$) then the averages $\pi_i f : = \sum_{g\in \Gamma} \pi_i(g) f \circ g^{-1}$ converge pointwise almost everywhere. However, in order to show that $\pi_i f$ converges to a $\Gamma$-invariant function, one needs to assume that $\Gamma \cc (B,\nu)$ has stable type $III_\lambda$ for some $\lambda \in (0,1]$. 

Amos Nevo and I are currently preparing an article on pointwise ergodic theorems for Gromov hyperbolic groups using the main results of this paper and \cite{BN2,BN2b}. 

\subsection{Organization and discussion of proofs}

The classification of ratio sets given above was obtained under the hypothesis that $\Gamma \cc (B,\nu)$ is ergodic. We remove this assumption in \S \ref{sec:nonergodic}. This easily proven result is helpful for establishing a general criterion (Theorem \ref{thm:AvEP}) for proving that a given nonsingular action $\Gamma \cc (B,\nu)$ contains a number $x$ in its stable range with $x \notin \{0,1,\infty\}$. The criterion is that there exists a sequence of integral kernels $\Upsilon_n:\Gamma \times B \times B \to [0,1]$ satisfying various conditions. Roughly speaking the idea is that if for every $b \in B$ there exists $b' \in B$ that is close to $b$ and $g\in \Gamma$ such that the logarithmic derivatives $\log\frac{d\nu \circ g}{d\nu}(b), \log\frac{d\nu \circ g}{d\nu}(b')$ are uniformly bounded but also separated from each other then there should exist a number in the ratio set corresponding to their difference. To put this rough idea into effect, instead of a sequence of maps from $B$ to $\Gamma \times B$, it is more convenient to consider a sequence of maps from $B$ to the space of probability measures on $\Gamma \times B$. 

A curious feature of this criterion is that it works entirely with $(B,\nu)$. One does not have to consider pmp actions $\Gamma \cc (X,\mu)$ explicitly. This is reflected in the proof - which first establishes the existence of an interesting number $x$ in the ratio set (by interesting, we mean a number not contained in $\{0,1,\infty\}$) and then shows that $x$ must also be contained in the stable ratio set. The latter is accomplished by showing that any sequence of integral kernels satisfying the appropriate conditions for $\Gamma \cc (B,\nu)$ can be extended to a sequence of integral kernels for the product action $\Gamma \cc (B\times X,\nu\times \mu)$. 

After the general criterion is established, it remains to verify its conditions for Gromov hyperbolic groups. In \S \ref{sec:Gromov}, we review hyperbolic groups, quasi-conformal measures and set notation. We give an explicit family of integral kernels in \S \ref{sec:automatic} and proceed to verify that they satisfy the conditions of Theorem \ref{thm:AvEP}. This part of the proof uses estimates on the cardinalities of the intersections of spheres and horospheres and a variety of quasi-identities from hyperbolic geometry. Theorem \ref{thm:main} follows from Theorem \ref{thm:AvEP}, Corollary \ref{cor:AvEP} and Lemma \ref{lem:admissible}.

\subsection{An example}
We close this introduction with an example showing that the action of the free group on its Gromov boundary with the usual conformal measure enjoys the curious property that its type is not equal to its stable type. 

\begin{example}[Free groups]
Let $F_r = \langle s_1,\ldots, s_r\rangle$ be the free group of rank $r \ge 2$. Let $S=\{s_1,\ldots, s_r, s_1^{-1},\ldots s_r^{-1}\}$. The boundary of $F_r$, denoted $\partial F_r$ is the set of all half-infinite sequences $\xi=(\xi_1,\xi_2,\ldots) \in S^\N$ with $\xi_{i+1} \ne \xi_i^{-1}$ for all $i\ge 1$. There is an obvious Markov measure $\nu$ on $\partial F_r$ determined by the following. For every $(\xi_1,\ldots, \xi_n) \in S^n$ satisfying $\xi_{i+1}\ne \xi_i^{-1}$ for all $1\le i \le n-1$, 
$$\nu( \{ \xi' \in \partial F_r:~ \xi'_i = \xi_i ~\forall 1\le i \le n\}) = (2r)^{-1}(2r-1)^{-n+1}.$$
The {\em reduced form} of an element $g\in F_r$ is the expression  $g=t_1\cdots t_n$ with $t_i \in S$ and $t_{i+1}\ne t_i^{-1}$ for all $i$. It is unique. There is a natural action of $F_r$ on $\partial F_r$ by 
$$(t_1\cdots t_n)\xi := (t_1,\ldots,t_{n-k},\xi_{k+1},\xi_{k+2}, \ldots)$$ where $t_1,\ldots, t_n \in S$,  $t_1\cdots t_n$ is in reduced form and $k$ is the largest number $\le n$ such that $\xi_i^{-1} = t_{n+1-i}$ for all $i\le k$.  Observe that if $g=t_1 \cdots t_n$ then the Radon-Nikodym derivative satisfies
$$\frac{d\nu \circ g}{d\nu}(\xi) = (2r-1)^{2k-n}.$$
From this it is not difficult to show that the action of $F_r$ on its boundary is type $III_\lambda$ with $\lambda = (2r-1)^{-1}$. However, the stable type of the action is different. Indeed, consider the action of $F_r$ on $\ZZ/2\ZZ$ in which every generator in $S$ acts nontrivially. This action preserves the uniform probability measure on $\ZZ/2\ZZ$. If $A \subset B \times \{ 0\} \subset B \times \ZZ/2\ZZ$ then for any $a\in A$, any element $g \in F_r$ with $ga \in A$ has even word length. Therefore, the Radon-Nikodym derivative $\frac{d\nu \circ g}{d\nu}(a)$ is an integral power of $(2r-1)^{-2}$. This shows that the stable ratio set does not contain $(2r-1)^{-1}$. It is a nontrivial fact (proven implicitly in \cite{BN1}) that the action $\Gamma \cc (B,\nu)$ has stable type $III_\tau$ with $\tau = (2r-1)^{-2} = \lambda^2$. 
\end{example}

{\bf Acknowledgements.}
I am grateful to Chris Connell for showing me how to prove Lemma \ref{lem:vol2}. This paper owes a debt of gratitude to Amos Nevo for many conversations about many previous versions of these results and their proofs, which have considerably simplified over several years.

\section{The ratio set of a non-ergodic action}\label{sec:nonergodic}

In the literature, ratio sets and type are usually only discussed for ergodic actions. The next result determines the ratio set of a non-ergodic action in terms of its ergodic components. Although will use this in Theorem \ref{thm:AvEP}, it is not necessary if one assumes the action $\Gamma \cc (B,\nu)$ is ergodic.

\begin{thm}\label{thm:nonergodic}
Let $\Gamma \cc (B,\nu)$ be a non-singular action on a standard probability space. Then an element $t\in [0,\infty]$ is in the ratio set of $\Gamma \cc (B,\nu)$ if and only if $t$ is in the ratio set of almost every ergodic component of $\Gamma \cc (B,\nu)$.
\end{thm}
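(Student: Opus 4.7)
The plan is to disintegrate both the measure and the ratio-set condition over the ergodic decomposition $\nu = \int_Z \nu_z \, d\zeta(z)$, using the two basic facts that $\Gamma$ preserves ergodic components (so the ergodic-components map $\pi: B \to Z$ satisfies $\pi \circ g = \pi$) and that $\frac{d\nu \circ g}{d\nu}$ agrees $\nu_z$-a.e.\ with $\frac{d\nu_z \circ g}{d\nu_z}$. Throughout I write $E_g^\epsilon = \{b : |\frac{d\nu \circ g}{d\nu}(b) - t| < \epsilon\}$, with $E_g^{(n)} = \{b : \frac{d\nu \circ g}{d\nu}(b) > n\}$ playing the analogous role for $t = \infty$.

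For the backward direction I argue directly. Given a Borel $A \subset B$ with $\nu(A) > 0$ and $\epsilon > 0$, the set $\{z : \nu_z(A) > 0\}$ has positive $\zeta$-measure; for $\zeta$-almost every such $z$ the hypothesis produces some $g_z \in \Gamma \setminus \{e\}$ with $\nu_z(A \cap g_z^{-1}A \cap E_{g_z}^\epsilon) > 0$. Countability of $\Gamma$ then supplies a single $g$ that works on a positive-$\zeta$-measure set of components, and integrating yields $\nu(A \cap g^{-1}A \cap E_g^\epsilon) > 0$, so $t$ lies in the ratio set of $\Gamma \cc (B,\nu)$.

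For the forward direction I argue by contraposition: assume $Z_{\mathrm{bad}} := \{z : t \notin RS(\Gamma \cc (B, \nu_z))\}$ has positive $\zeta$-measure and produce a global witness $(A^*, n_0)$ showing $t \notin RS(\Gamma \cc (B, \nu))$. I first reformulate ``level-$n$ failure of $t \in RS$'' as: the Borel relation $R_n := \bigcup_{g \ne e}\{(b, gb) : b \in E_g^{1/n}\} \subseteq B \times B$ admits a positive-measure ``independent'' set, meaning a Borel $A$ with $\nu(A \cap g^{-1}A \cap E_g^{1/n}) = 0$ for every $g \ne e$. Each $z \in Z_{\mathrm{bad}}$ supplies some $n_z$ and an $R_{n_z}$-independent Borel set $A_z$ with $\nu_z(A_z) > 0$; restricting $Z_{\mathrm{bad}}$ to a positive-$\zeta$-measure subset $Z_1$ lets me fix $n_z = n_0$. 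A measurable-selection argument then produces a Borel family $z \mapsto A_z$ on $Z_1$; the global set $A^* := \{b : \pi(b) \in Z_1,\ b \in A_{\pi(b)}\}$ is Borel with $\nu(A^*) = \int_{Z_1} \nu_z(A_z) \, d\zeta(z) > 0$, and since $R_{n_0}$ respects the ergodic decomposition (because $\pi \circ g = \pi$), we get
$$
\nu(A^* \cap g^{-1}A^* \cap E_g^{1/n_0}) = \int_{Z_1} \nu_z(A_z \cap g^{-1}A_z \cap E_g^{1/n_0}) \, d\zeta(z) = 0
$$
for every $g \ne e$, contradicting $t \in RS(\Gamma \cc (B, \nu))$.

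The main obstacle is the measurable-selection step in the forward direction: extracting from the pointwise-existential hypothesis on bad witnesses a $\zeta$-measurable assignment $z \mapsto A_z$ of positive-$\nu_z$-measure $R_{n_0}$-independent sets. I expect to handle this either via a selection theorem for positive-measure independent sets in Borel graphs on a standard Borel space, or through a countable-algebra density argument exploiting the fact that ``$A$ is $R_{n_0}$-independent mod null'' is hereditary under taking positive-measure subsets. Once measurability is in hand, Fubini together with the $\Gamma$-invariance of the ergodic decomposition takes care of everything else.
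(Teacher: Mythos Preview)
Your backward direction is correct and straightforward. The paper, however, treats both directions at once via a device that also sidesteps the measurable-selection issue you flag in the forward direction. The key is a reformulation lemma: fixing a strict Borel Radon--Nikodym cocycle $\alpha$, for any Borel $A$ set $A_{\epsilon,t,g}=\{a\in A:\ ga\in A,\ |\alpha(g,a)-t|<\epsilon\}$ and $A_t=\bigcap_{n}\bigcup_{g\ne e}A_{1/n,t,g}$; one then checks that $t\in RS(\Gamma\cc(B,\mu))$ if and only if $\mu(A\setminus A_t)=0$ for \emph{every} Borel $A$. Since the set $A_t$ depends only on $A$ and on the fixed cocycle $\alpha$, not on the measure, the same criterion applies verbatim to $\nu$ and to each ergodic component $\eta$, and because $A_t\subset A$ disintegration immediately gives $\nu(A\setminus A_t)=0\Leftrightarrow\eta(A\setminus A_t)=0$ for $\omega$-a.e.\ $\eta$. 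The gain over your approach is that the existential witness-condition is converted into a universal null-set condition, and universal conditions disintegrate with no selection at all. Your route can certainly be completed as well---observe that $\bar A_n:=A\setminus\bigcup_{g\ne e}A_{1/n,t,g}$ is exactly an $R_n$-independent set in your sense, and a countable Borel coloring of the locally countable Borel graph $R_{n_0}$ furnishes countably many independent Borel sets, one of which must have positive $\nu_z$-measure whenever any independent set does, so no genuine selection theorem is needed---but the $A_t$ device packages all of this in a few lines and makes the two directions symmetric.
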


Before proving this result, let us clarify the statement. For any $g\in \Gamma$, the Radon-Nikodym derivative $b \mapsto \frac{d\nu\circ g}{d\nu}(b)$ is only defined almost everywhere and, in particular, it only satisfies the cocycle identity almost everywhere. However, by \cite[Theorem B.9]{Zi84}, there exists a Borel cocycle $\alpha:\Gamma \times B \to \RR_{>0}$ which agrees with the Radon-Nikodym derivative $\frac{d\nu \circ g}{d\nu}(b)$ almost everywhere and satisfies the cocycle equation:
$$\alpha(g_2,g_1x)\alpha(g_1,x)=\alpha(g_2g_1,x)$$
for every $g_1,g_2\in \Gamma$ and {\em every} $x\in X$. Let us fix such a cocycle. By the ergodic decomposition theorem in \cite{GS00}, there is a probability measure $\omega$ on the space of ergodic $\Gamma$-quasi-invariant probability measures on $B$ such that 
$$\nu = \int \eta~d\omega(\eta)$$
and for $\omega$-a.e. $\eta$ and every $g\in \Gamma$,
$$\alpha(g,b) =  \frac{d\eta\circ g}{d\eta}(b)$$
for $\eta$-a.e. $b \in B$. Theorem \ref{thm:nonergodic} applies to any ergodic decomposition of this form. For the rest of this section, fix such an $\alpha$ and $\omega$.

\begin{lem}
Let $\Gamma \cc (B,\nu)$ be a non-singular action on a standard probability space. For $0\le t<\infty, \epsilon>0, A \subset B$ and $g\in \Gamma$, let
$$A_{\epsilon,t,g} = \left\{a \in A:~ ga \in A \textrm{ and } \left| \alpha(g,a) - t\right| < \epsilon\right\}$$
and
$$A_t= \cap_{n=1}^\infty \cup_{g\in \Gamma \setminus \{e\}} A^\alpha_{1/n,t,g}.$$
Then $t$ is in ratio set of $\Gamma \cc (B,\nu)$ if and only if $\nu(A_t)  = \nu(A)$ for every positive measure set $A \subset B$.
\end{lem}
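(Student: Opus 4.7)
The lemma is a direct reformulation of the definition of the ratio set, translating the existential condition on arbitrary positive measure sets into a full measure condition on a single set. I would prove both implications by a short unpacking of the definitions together with countable additivity.

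For the ($\Leftarrow$) direction, assume $\nu(A_t)=\nu(A)$ for every positive measure $A\subset B$. Given such an $A$ and an $\epsilon>0$, pick $n$ with $1/n<\epsilon$. Since
$$A_t \subset \bigcup_{g\in\Gamma\setminus\{e\}} A_{1/n,t,g}$$
has full measure in $A$, countable additivity supplies some $g\neq e$ with $\nu(A_{1/n,t,g})>0$. Taking $A':=A_{1/n,t,g}$ then verifies all the defining conditions of the ratio set: $gA'\subset A$ by construction of $A_{1/n,t,g}$, and $|\alpha(g,a)-t|<1/n<\epsilon$ on $A'$.

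For the ($\Rightarrow$) direction, assume $t$ lies in the ratio set and suppose, for contradiction, that $\nu(A_t)<\nu(A)$ for some positive measure set $A$. Since $A_t$ is a countable intersection, some $n$ gives a positive measure set
$$C := A \setminus \bigcup_{g\in\Gamma\setminus\{e\}} A_{1/n,t,g}.$$
Now apply the ratio set definition to $C$ (rather than to $A$) with tolerance $1/n$: there exist $C'\subset C$ of positive measure and $g\neq e$ such that $gC'\subset C$ and $|\alpha(g,\cdot)-t|<1/n$ on $C'$. Since $C\subset A$, we have $gC'\subset A$, so $C'\subset A_{1/n,t,g}$, contradicting $C\cap A_{1/n,t,g}=\emptyset$.

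The argument is essentially bookkeeping; the only subtle point is that in the ($\Rightarrow$) direction one must apply the ratio set condition to the ``bad'' subset $C$ rather than to $A$ itself, so that the witnesses produced by the definition land back inside $C$ and yield the required contradiction. The hypothesis $0\le t<\infty$ in the displayed definition of $A_{\epsilon,t,g}$ indicates that the lemma is stated for finite $t$ only, so no separate analysis of $t=+\infty$ is needed (the same sort of argument would go through verbatim using $\alpha(g,a)>n$ in place of the proximity condition).
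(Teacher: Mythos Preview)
Your proof is correct and follows the same approach as the paper's. The paper handles the $(\Rightarrow)$ direction exactly as you do, forming $\bar A_n := A \setminus \bigcup_{g\ne e} A_{1/n,t,g}$ and noting that the ratio set condition applied to $\bar A_n$ forces $\nu(\bar A_n)=0$; for the $(\Leftarrow)$ direction the paper simply writes ``the converse is clear,'' which you have unpacked explicitly.
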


\begin{proof}
Suppose $t$ is in the ratio set. Let $A \subset B$ be a set of positive measure. Let 
$$\bar{A}_n = A \setminus \cup_{g\in \Gamma \setminus \{e\}} A_{1/n,t,g}.$$
For any subset $A' \subset \bar{A}_n$ there does not an element $g\in \Gamma \setminus \{e\}$ such that $gA' \subset \bar{A}_n$ and $|\frac{d\nu\circ g}{d\nu}(a) - t| < 1/n$ for a.e. $a\in A'$. Because $t$ is in the ratio set, this implies $\nu(\bar{A}_n)=0$. Thus $\nu(A^\alpha_t)=\nu(A)$. The converse is clear.
\end{proof}

\begin{proof}[Proof of Theorem \ref{thm:nonergodic}]
We will treat the case $t \in [0,\infty)$ only. The case $t=+\infty$ is similar. So suppose $t$ is in the ratio set. Observe that for any $A \subset B$, $A_t \subset A$. So
$$\nu(A) =\int \eta(A)~d\omega(\eta)\ge  \int \eta(A_t)~d\omega(\eta)=\nu(A_t)$$
with equality holding if and only if $\eta(A_t) = \eta(A)$ for $\omega$-a.e. $\eta$.

It follows from the previous lemma that $t$ is in the ratio set if and only if for every measurable $A \subset B$, $\nu(A)=\nu(A_t)$ which, by the equation above, is equivalent to $\eta(A_t)=\eta(A)$ for $\omega$-a.e. $\eta$. By the previous lemma again, this is equivalent to the statement that $t$ is in the ratio set of a.e. ergodic component of $\Gamma \cc (B,\nu)$.


\end{proof}

\begin{cor}
Let $\Gamma \cc (B,\nu)$ be a non-singular action on a standard probability space. Then the ratio set of $\Gamma \cc (B,\nu)$ is either  $\{1\}, \{0,1,\infty\}$, $[0,\infty]$, or $\{0,\infty\} \cup \{\lambda^n:~n\in \ZZ\}$ for some $\lambda \in (0,1)$. Similarly, the stable ratio set of $\Gamma \cc (B,\nu)$ is either $\{1\}, \{0,1,\infty\}$, $[0,\infty]$, or $\{0,\infty\} \cup \{\lambda^n:~n\in \ZZ\}$ for some $\lambda \in (0,1)$. 
\end{cor}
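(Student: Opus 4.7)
The plan is to reduce both claims to the ergodic case via Theorem~\ref{thm:nonergodic}, and then derive the list of possibilities from abstract structural properties: closure under multiplication, inversion, and limits in $[0,\infty]$, together with the symmetry between $0$ and $\infty$.

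For the ratio set, Theorem~\ref{thm:nonergodic} identifies $t \in RS(\Gamma \cc (B, \nu))$ with ``$t \in RS(\Gamma \cc (B, \eta))$ for $\omega$-a.e.\ ergodic component $\eta$'', and I would extract three properties from this characterization. First, if $t_1, t_2 \in RS \setminus \{0, \infty\}$, intersect the two corresponding $\omega$-conull sets of components to find a single conull set on which both $t_i$ lie in the ergodic ratio set $RS_\eta$; since $RS_\eta \setminus \{0, \infty\}$ is a multiplicative subgroup by \cite{FM77}, $t_1 t_2 \in RS_\eta$ on this set, so $t_1 t_2 \in RS$, and inversion is analogous. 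Second, if $t_n \to t$ with each $t_n \in RS$, a countable intersection of conull sets produces a conull set of components on which every $t_n$ belongs to $RS_\eta$, and the ergodic closedness of $RS_\eta$ (Prop.~8.5 of \cite{FM77}) forces $t \in RS_\eta$ on that set, hence $t \in RS$. Third, the componentwise identity $\alpha(g^{-1}, gb) = \alpha(g, b)^{-1}$ gives $0 \in RS$ if and only if $\infty \in RS$.

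Combining the first two properties, $RS \setminus \{0, \infty\}$ is a closed subgroup of $\RR_{>0}$, hence is $\{1\}$, $\{\lambda^n : n \in \ZZ\}$ for some $\lambda \in (0,1)$, or all of $\RR_{>0}$. When this subgroup is nontrivial, its closure in $[0, \infty]$ already contains $0$ and $\infty$, forcing them into $RS$; when it is $\{1\}$, the third property leaves $0$ and $\infty$ either both in or both out of $RS$. This exhausts the four listed cases.

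For the stable ratio set, observe that for each pmp action $\Gamma \cc (X, \mu)$ the product $\Gamma \cc (B \times X, \nu \times \mu)$ is itself a non-singular action on a standard probability space, so the ratio-set part of the corollary already tells us that each $RS_X := RS(\Gamma \cc (B \times X, \nu \times \mu))$ is one of the four listed sets. Then $SRS = \bigcap_X RS_X$ satisfies: $SRS \setminus \{0, \infty\}$ is an intersection of closed subgroups of $\RR_{>0}$, hence is itself a closed subgroup, and $0 \in SRS$ if and only if $\infty \in SRS$ by the componentwise symmetry. The subgroup classification and the same limit-point argument as above then yield the same four cases for $SRS$. The step I expect to be most delicate is the second one above, commuting an ``$\omega$-a.e.\ $\eta$'' qualifier with a limit in $t$; the standard reduction to a countable dense sequence works, but it relies on measurability of the events $\{\eta : t \in RS_\eta\}$, which is ensured by the measurable ergodic decomposition of \cite{GS00} together with the Borel cocycle $\alpha$ of \cite{Zi84} fixed just before Theorem~\ref{thm:nonergodic}.
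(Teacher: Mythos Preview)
Your proposal is correct and follows essentially the same approach as the paper: reduce to the ergodic case via Theorem~\ref{thm:nonergodic}, use the known structural properties of ergodic ratio sets from \cite{FM77}, and then observe that the stable ratio set is an intersection of ratio sets. The paper's proof is very terse (two sentences), and what you have written simply spells out the details it leaves implicit; your concern about measurability is unnecessary, since your countable-intersection argument only uses the existence of $\omega$-conull sets guaranteed by Theorem~\ref{thm:nonergodic}, not measurability of $\{\eta : t \in RS_\eta\}$ itself.
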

\begin{proof}
As explained in the introduction, this result hold for the ratio set if $\Gamma \cc (B,\nu)$ is ergodic. Theorem \ref{thm:nonergodic} now implies this result holds for the ratio set in general. The statement for stable ratio set is an immediate consequence of the statement for the ratio set (since the stable ratio set is an intersection of ratio sets). 
\end{proof}

\begin{remark}
By convention, the type of a non-ergodic action is undefined and the stable type of a non-weakly-mixing action is undefined. 
\end{remark}


\section{A criterion for stable type $III_\lambda$ with $\lambda>0$}\label{sec:AvEP}


The purpose of this section is to prove a criterion for a non-singular action $\Gamma \cc (B,\nu)$ to be stable type $III_\lambda$ for some $\lambda>0$. First we assume there is a metric $d_B$ on $B$ compatible with its Borel structure such that $(B,d_B)$ is a compact topological space. Let $R:\Gamma \times B \to \RR$ be an additive Borel cocycle such that
$$R(g,b)= \log \frac{d\nu\circ g}{d\nu}(b)$$
$\nu$-almost everywhere. We assume that $R$ satisfies the cocycle equation $R(g_2g_1,b)=R(g_2,g_1b)+R(g_1,b)$ everywhere. Such a cocycle exists by  \cite[Theorem B.9]{Zi84}.

\begin{defn}\label{defn:zeta}
A Borel family of nonnegative Borel functions $\{\Upsilon_n\}_{n=1}^\infty$, $\Upsilon_n: \Gamma \times B \times B \to \RR$ is an {\em admissible family} if, when $S_n:=\{(g,b,b'):~\Upsilon_n(g,b,b')>0\}$,
\begin{enumerate}
\item For every $b,n$, $\sum_{g\in \Gamma} \int \Upsilon_n(g,b,b') ~d\nu(b')=1$. 
\item There is a function $\beta:\NN \to \RR$ such that 
\begin{enumerate}
\item $\lim_{n\to\infty} \beta(n) = 0$;
\item for all $(g,b,b')\in S_n$, $d_B(b,b') \le \beta(n)$ and $d_B(g^{-1} b, g^{-1} b') \le \beta(n)$.
\end{enumerate} 
\item There is a constant $C>0$ such that for almost every $(g,b,b') \in S_n$ (with respect to the product of counting measure on $\Gamma$ and $\nu\times\nu$),
$$|R(g^{-1},b)| + |R(g^{-1},b')|   \le C.$$
\item For some constant $C>0$,
\begin{eqnarray*}
 \int \sum_{g\in \Gamma}  \Upsilon_{n}(g,b,b')~d\nu(b) &\le& C\quad  \forall n \textrm{ and for a.e. } b'\\
 \int \sum_{g\in \Gamma}  \Upsilon_n(g,b,gb') \frac{d\nu\circ g}{d\nu}(b') ~d\nu(b)& \le& C\quad  \forall n \textrm{ and for a.e. } b'\\
   \int \sum_{g\in \Gamma}  \Upsilon_n(g,gb,b') \frac{d\nu\circ g}{d\nu}(b)~d\nu(b')& \le& C\quad  \forall n \textrm{ and for a.e. } b.
    \end{eqnarray*}
    \end{enumerate}
\end{defn}
The functions $\Upsilon_n$ are used as kernels for integral operators in the next subsection. The purpose of this section is to prove:

\begin{thm}\label{thm:AvEP}
Let $(B,\nu)$ be a standard probability space. Suppose $\Gamma \cc (B,\nu)$ is a non-singular action and there is an admissible family $\{\Upsilon_n\}_{n=1}^\infty$ for $\Gamma \cc (B,\nu)$. Let $\zeta_{n}$ be the probability measure on $\RR$ defined by
$$\zeta_{n}(E) = \sum_{g \in \Gamma} \iint   1_E\left( R(g^{-1},b') - R(g^{-1},b) \right) \Upsilon_n(g,b,b') ~d\nu(b')d\nu(b).$$
Let $\zeta_\infty$ be any weak* limit of $\{\zeta_n\}_{n=1}^\infty$. For every $T$ in the support of $\zeta_\infty$, $e^T$ is in the stable ratio set of $\Gamma \cc (B,\nu)$.  

\end{thm}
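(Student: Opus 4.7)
I would split the argument into two stages: first show that $e^T \in RS(\Gamma \cc (B,\nu))$, then upgrade to $e^T \in SRS(\Gamma \cc (B,\nu))$ by extending the admissible family to an arbitrary probability-measure-preserving product.

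\emph{Stage 1 (ratio set).} Fix $\epsilon > 0$ and let $I = (T - \epsilon, T + \epsilon)$. The portmanteau theorem gives $\liminf_n \zeta_n(I) \ge \zeta_\infty(I) > 0$, so along a subsequence $\zeta_{n_k}(I) \ge \delta$ for some $\delta > 0$. I argue by contradiction: assume $e^T \notin RS$. By Theorem~\ref{thm:nonergodic} (to reduce to an ergodic component when necessary), there is then a positive-measure $A \subset B$ and $\eta > 0$ with
$$\nu\bigl(\{b \in A : gb \in A,\ |R(g,b) - T| < \eta\}\bigr) = 0 \qquad \text{for every } g \ne e.$$
Substituting $c = g^{-1}b$ and $c' = g^{-1}b'$ in the integral defining $\zeta_n(I)$ introduces Jacobians $\alpha(g,c)\alpha(g,c')$ and recasts the $\delta$-mass as supported on tuples $(g,c,c')$ with $c$ close to $c'$, $gc$ close to $gc'$, individual cocycle values $R(g,c), R(g,c') \in [-C, C]$ by Condition~(3), and $R(g,c) - R(g,c') \approx T$. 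Partition $[-C, C]$ into finitely many subintervals of length $< \eta/4$; pigeonhole isolates a fixed interval $J_k$ carrying a positive share of the mass on tuples with $R(g,c) \in J_k$ and $R(g,c') \in J_k - T$. Condition~(4), an approximate double-stochasticity of the kernels, allows us to marginalize the pair measure onto the first coordinate $c$ with controlled loss, producing a positive-$\nu$-measure set of $c$'s each admitting $g$ with $gc \in A$ and $R(g,c) \in J_k$; a final pigeonhole over the finitely many $J_k$'s contradicts the nullity hypothesis.

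\emph{Stage 2 (stable ratio set).} Given an arbitrary pmp action $\Gamma \cc (X, \mu)$, fix a compatible metric $d_X$ and metrize $B \times X$ by $d_B + d_X$. Choose a refining sequence of finite Borel partitions $\mathcal{P}_n$ of $X$ with $d_X$-diameters tending to zero, let $P_n(x) \in \mathcal{P}_n$ denote the atom containing $x$, and set $\rho_n(x, x') = \mu(P_n(x))^{-1} \mathbf{1}_{P_n(x)}(x')$. Define
$$\tilde\Upsilon_n(g, (b,x), (b', x')) := \Upsilon_n(g, b, b')\, \rho_n(x, x').$$
Because the $X$-action is measure-preserving, the Radon--Nikodym cocycle on $B \times X$ equals $R$ lifted trivially, so conditions (1)--(4) for $\tilde\Upsilon_n$ reduce to the corresponding conditions for $\Upsilon_n$ (the $x$-integrations of $\rho_n$ produce $1$), and the associated measure $\tilde\zeta_n$ equals $\zeta_n$. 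Hence $T \in \supp(\tilde\zeta_\infty)$, and Stage~1 applied to the product action yields $e^T \in RS(\Gamma \cc (B \times X, \nu \times \mu))$. Since $(X, \mu)$ was arbitrary, $e^T \in SRS(\Gamma \cc (B, \nu))$.

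\emph{Main obstacle.} The delicate part is the pair-to-singleton bridge in Stage~1: $\zeta_n$ only records \emph{differences} $R(g^{-1}, b') - R(g^{-1}, b)$, whereas the ratio-set criterion is a statement about the cocycle at a single point $b$. Matching these requires the symmetric mass-bound estimates of Condition~(4) together with careful bookkeeping over the finitely many bounded intervals furnished by Condition~(3). Stage~2 requires no new technical idea beyond regularizing $\delta_x$ by the partition kernels $\rho_n$, and the four conditions defining admissibility were tailored precisely so that this extension is automatic.
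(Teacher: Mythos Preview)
Your Stage~1 has a genuine gap. The failure of $e^T$ to lie in the ratio set produces one \emph{specific} positive-measure set $A\subset B$ on which no element of $\Gamma$ achieves derivative close to $e^T$ while returning to $A$. But the mass $\zeta_n(I)$ is an integral over \emph{all} of $B\times B$; your substitution and pigeonhole give information about tuples $(g,c,c')$ with $R(g,c)-R(g,c')\approx T$ somewhere in $B$, and nothing ties these tuples to the set $A$. The sentence ``producing a positive-$\nu$-measure set of $c$'s each admitting $g$ with $gc\in A$'' is unjustified: you never restricted anything to $A$, and there is no mechanism in Conditions~(1)--(4) forcing $gc$ to land there. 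So no contradiction with the nullity hypothesis on $A$ follows.

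The paper closes this gap by passing to the Maharam extension $B\times\RR$. The witness set $A$ is thickened and $\Gamma$-saturated to a $\Gamma$-\emph{invariant} set $\tilde A=\Gamma\bigl(A\times(-\epsilon,\epsilon)\bigr)\subset B\times\RR$ which is disjoint from its own $T$-translate in the $\RR$-coordinate. One then defines averaging operators $\sW_n,\sX_n,\sY_n,\sZ_n$ on $L^1(\nu\times\theta)$ built from $\Upsilon_n$; the admissibility conditions make them uniformly bounded, and $\Gamma$-invariance of $f=1_{\tilde A}$ forces $\sX_nf=\sW_nf$ and $\sY_nf=\sZ_nf$, whence $\|f-\sZ_nf\|\to 0$. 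But $\sZ_nf(b,t)=\int f(b,t+s)\,d\zeta_{n,b}(s)$, so on $\tilde A$ the disjointness from the $T$-translate forces $\zeta_n((T-\epsilon,T+\epsilon))\to 0$, contradicting $T\in\supp\zeta_\infty$. The $\Gamma$-invariant lift is precisely what converts the local obstruction on $A$ into a global one that $\zeta_n$ can see; your direct pigeonhole on $B$ cannot substitute for it.

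Your Stage~2 is essentially the right idea and matches the paper's strategy, but one technical point fails as written: Condition~(2)(b) for $\tilde\Upsilon_n$ requires $d_X(g^{-1}x,g^{-1}x')$ to be small whenever $x'\in P_n(x)$, and your partition $\mathcal P_n$ is chosen independently of $g$. Since the support of $\Upsilon_n$ may involve infinitely many $g$'s, you cannot uniformly control the $g^{-1}$-diameter of $P_n(x)$. The paper first replaces $(X,\mu)$ by a compact topological model with continuous $\Gamma$-action and comparable ball measures (Proposition~\ref{prop:model}), and then takes the $X$-kernel to be $\kappa(B(k,\rho(n,g)))^{-1}1_{B(k,\rho(n,g))}(k')$ with $\rho(n,g)$ chosen, for each $g$ separately, small enough that $d_K(g^{-1}k,g^{-1}k')\le 1/n$. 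Your partition kernel can be repaired in the same way, but not as stated.
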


\begin{remark}
Because $|R(g^{-1},b)| + |R(g^{-1},b')|$ is uniformly bounded on the support of $\Upsilon_n$, it follows that there is a compact interval $[-2C,2C]$ containing the support of each measure $\zeta_n$. Therefore, a weak* limit point of the sequence $\{\zeta_n\}_{n=1}^\infty$ exists by the Banach-Alaoglu Theorem.
\end{remark}

\begin{cor}\label{cor:AvEP}
If the hypotheses of Theorem \ref{thm:AvEP} are satisfied and, in addition, there is a nonzero $T$ in the support of $\zeta_\infty$ and $\Gamma \cc (B,\nu)$ is ergodic then $\Gamma \cc (B,\nu)$ is not type $III_0$. If $\Gamma \cc (B,\nu)$ is weakly mixing then $\Gamma \cc (B,\nu)$ is not stable type $III_0$.
\end{cor}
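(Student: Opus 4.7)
The plan is essentially to read off the corollary from Theorem~\ref{thm:AvEP} combined with the type classification established in the corollary at the end of \S\ref{sec:nonergodic}, so the work is in organizing the logic rather than producing new estimates.

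First I would note a size restriction on the support of $\zeta_\infty$. Condition (3) of Definition~\ref{defn:zeta} forces $|R(g^{-1},b')-R(g^{-1},b)|\le 2C$ on $S_n$, so every $\zeta_n$ is supported in the compact interval $[-2C,2C]$, and this containment passes to the weak* limit $\zeta_\infty$. Hence any $T$ in the support of $\zeta_\infty$ satisfies $|T|\le 2C<\infty$, and in particular $e^T\in(0,\infty)$. By hypothesis some such $T$ is nonzero, so $e^T\notin\{0,1,\infty\}$.

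Next I would apply Theorem~\ref{thm:AvEP} directly: since the hypotheses of that theorem hold, $e^T$ lies in the stable ratio set $SRS(\Gamma\cc(B,\nu))$. Unwinding the definition of $SRS$ as an intersection over all pmp actions $\Gamma\cc(X,\mu)$, and specializing to the trivial pmp action on a one-point space, one sees $SRS(\Gamma\cc(B,\nu))\subset RS(\Gamma\cc(B,\nu))$. Therefore $e^T$ lies in $RS(\Gamma\cc(B,\nu))$ as well.

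For the first assertion, ergodicity of $\Gamma\cc(B,\nu)$ allows the classification of ratio sets recalled in the introduction and reproved in the corollary of \S\ref{sec:nonergodic}: the ratio set is one of $\{1\}$, $\{0,1,\infty\}$, $\{0,\infty\}\cup\{\lambda^n:n\in\ZZ\}$, or $[0,\infty]$. Since $RS(\Gamma\cc(B,\nu))$ contains $e^T\notin\{0,1,\infty\}$, it cannot equal $\{0,1,\infty\}$, so the type is not $III_0$. For the second assertion I would observe that weak mixing (which implies ergodicity, by taking $X$ a point) is exactly the hypothesis under which the stable ratio set satisfies the same four-case classification, again as recorded in the corollary of \S\ref{sec:nonergodic} and the discussion of stable type. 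Hence $SRS(\Gamma\cc(B,\nu))$ contains $e^T\notin\{0,1,\infty\}$ and so cannot equal $\{0,1,\infty\}$, giving that the stable type is not $III_0$.

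There is essentially no obstacle here; the only point that requires a moment's care is ensuring $T$ is finite (so that $e^T\ne 0,\infty$), which is why I would isolate the uniform bound $|T|\le 2C$ coming from condition (3) at the very start.
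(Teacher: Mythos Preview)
Your argument is correct and is exactly the reasoning the paper leaves implicit: the corollary is stated without proof, relying on Theorem~\ref{thm:AvEP}, the remark that $\zeta_\infty$ is supported in $[-2C,2C]$, and the ratio-set classification from \S\ref{sec:nonergodic}. Your explicit isolation of the bound $|T|\le 2C$ and the use of the trivial pmp action to get $SRS\subset RS$ are the only details one needs to add, and you handle them cleanly.
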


\begin{remark}
In applying Theorem \ref{thm:AvEP} to the Gromov hyperbolic case, we will construct an admissible family $\{\Upsilon_n\}$ in such a way that the support of $\zeta_n$ is bounded away from zero (i.e., $|R(g^{-1},b') - R(g^{-1},b)|$ is bounded away from zero on the support of $\Upsilon_n$ for every $n$). It follows that the stable ratio set is not contained in $\{0,1,\infty\}$.
\end{remark}


\subsection{Operators}\label{sec:operators}

Let $\{\Upsilon_n\}_{n=1}^\infty$ be an admissible family. Let $C>0, R(g,b)$, and so on be as in Definition \ref{defn:zeta}. For $t\in \RR$, let $A_t:\RR \to \RR$ be addition by $t$ (so $A_t(r)=r+t$). Let $\theta$ be a probability measure on $\RR$ equivalent to Lebesgue measure and such that, for some constant $C'$ and every $t_0 \in \RR$ with $|t_0|\le C$,
$$\frac{d \theta \circ A_{t_0}}{d\theta} \le C'$$
almost everywhere. For example, we could choose $\theta$ to satisfy $d\theta := (1/2)e^{-|t|}~dt$. 

$\Gamma$ acts on $B \times \RR$ by $g(b,t) := (gb,t+R(g,b))$ and on $L^1(\nu\times\theta)$ by $g\cdot f:=f\circ g^{-1}$. Define operators $\sW_n,\sX_n,\sY_n,\sZ_n$ on $L^1(\nu\times\theta)$ by: 
\begin{eqnarray*}
\sW_n f(b,t) &:=&\sum_{g\in \Gamma} \int f(b',t)\Upsilon_n(g,b,b')  ~ d\nu(b')\\
\sX_n f(b,t) &:=& \sum_{g\in \Gamma} \int f(g^{-1}b',t + R(g^{-1},b') )\Upsilon_n(g,b,b')  ~ d\nu(b') \\
\sY_n f(b,t) &:=&\sum_{g\in \Gamma} \int  f(g^{-1}b, t + R(g^{-1},b')  )\Upsilon_n(g,b,b')  ~ d\nu(b')\\
\sZ_n f(b,t) &:=&\sum_{g\in \Gamma} \int  f(b, t + R(g^{-1},b') - R(g^{-1},b))\Upsilon_n(g,b,b')  ~ d\nu(b').
\end{eqnarray*}

The main result of this subsection is:
\begin{prop}\label{prop:1Lambda}
Suppose $\{\Upsilon_n\}_{n=1}^\infty$ is an admissible family. Then for any $\Gamma$-invariant $f \in L^1(\nu\times\theta)$,
$$\lim_{n\to\infty} \| f - \sZ_n f\| = 0.$$
\end{prop}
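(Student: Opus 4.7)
The plan is to use the $\Gamma$-invariance of $f$ to identify pairs among the four operators and then reduce to proving two $L^1$-convergence statements that hold for \emph{any} $f \in L^1(\nu \times \theta)$. Indeed, $\Gamma$-invariance $f \circ g^{-1} = f$ reads $f(g^{-1}b, t + R(g^{-1},b)) = f(b,t)$ almost everywhere. Applying this with $b'$ in place of $b$ gives $f(g^{-1}b', t + R(g^{-1},b')) = f(b',t)$, which substituted into the definition of $\sX_n$ yields $\sX_n f = \sW_n f$. Rewriting the same identity as $f(g^{-1}b, s) = f(b, s - R(g^{-1},b))$ and specializing to $s = t + R(g^{-1},b')$ yields $\sY_n f = \sZ_n f$. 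Combined with the triangle inequality, this reduces the proposition to showing $\|f - \sW_n f\|_1 \to 0$ and $\|\sX_n f - \sY_n f\|_1 \to 0$, neither of which requires $\Gamma$-invariance of $f$.

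For pointwise control on a dense class, note that Condition (1) of admissibility rewrites $(\sW_n f - f)(b,t)$ as $\sum_{g \in \Gamma} \int [f(b',t) - f(b,t)]\, \Upsilon_n(g,b,b')\, d\nu(b')$ and $(\sY_n f - \sX_n f)(b,t)$ as $\sum_{g \in \Gamma} \int [f(g^{-1}b, s) - f(g^{-1}b', s)]\, \Upsilon_n(g,b,b')\, d\nu(b')$ with $s = t + R(g^{-1},b')$. In each case the two arguments of $f$ in the $B$-slot are within distance $\beta(n)$ on the support of $\Upsilon_n$, by Condition (2)(b). Thus for any $f_0 \in C_c(B \times \RR)$, both integrands are pointwise bounded by $\omega_{f_0}(\beta(n))$, where $\omega_{f_0}$ is the modulus of continuity of $f_0$ in the $B$-variable and is uniform in $t$ because $f_0$ is uniformly continuous on $B \times \RR$. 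Condition (1) and Fubini then give $\|\sW_n f_0 - f_0\|_1,\ \|\sY_n f_0 - \sX_n f_0\|_1 \le \omega_{f_0}(\beta(n)) \to 0$.

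To extend from the dense subspace $C_c(B \times \RR) \subset L^1(\nu \times \theta)$ to all of $L^1$, I establish uniform $L^1 \to L^1$ bounds on $\sW_n, \sX_n, \sY_n$. For $\sW_n$, Fubini and the first inequality of Condition (4) immediately give $\|\sW_n\|_{1 \to 1} \le C$. For $\sX_n$ (respectively $\sY_n$) I change variables in the $B$-integral by $b' \mapsto gb'$ (respectively $b \mapsto gb$), producing a Radon--Nikodym factor $e^{R(g,b')}$ (respectively $e^{R(g,b)}$) and converting the relevant $\Upsilon_n$-integral into exactly the expression bounded by the second (respectively third) inequality of Condition (4). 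Simultaneously I translate the $t$-variable; since the relevant translation has magnitude at most $C$ by Condition (3), the translation-quasi-invariance of $\theta$ costs only a factor of $C'$. The combined estimate gives $\|\sX_n\|_{1 \to 1},\ \|\sY_n\|_{1 \to 1} \le CC'$, uniformly in $n$.

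A standard $3\epsilon$ argument now closes the proof: given $f \in L^1(\nu \times \theta)$ and $\epsilon > 0$, choose $f_0 \in C_c(B \times \RR)$ with $\|f - f_0\|_1 < \epsilon$, bound the two errors on $f_0$ by $\omega_{f_0}(\beta(n))$, and bound the errors on $f - f_0$ by $(1 + CC')\epsilon$ using the uniform operator bounds. The main technical obstacle I anticipate is the third paragraph: the simultaneous change of variables in $B$ and in $\RR$ required for the $\sX_n$ and $\sY_n$ estimates, where one must track the Radon--Nikodym factors from the $\Gamma$-action on $B$ and the quasi-invariance of $\theta$ in tandem, is precisely where Conditions (3) and (4) interact.
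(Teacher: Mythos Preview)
Your proof is correct and follows essentially the same approach as the paper: the paper first establishes the uniform operator bounds on $\sW_n,\sX_n,\sY_n$ (your paragraph 3), then proves $\|f-\sW_nf\|\to 0$ and $\|\sX_nf-\sY_nf\|\to 0$ for all $f\in L^1$ via the dense subspace $C_c(B\times\RR)$ (your paragraphs 2 and 4), and finally uses $\Gamma$-invariance to identify $\sX_nf=\sW_nf$ and $\sY_nf=\sZ_nf$ (your paragraph 1). The only difference is the order of exposition.
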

First we prove that these operators are uniformly bounded.

\begin{prop}\label{prop:1bounded}
There is a constant $C_1>0$ (independent of $n$) such that the operator norms of $\sW_n,\sX_n$ and $\sY_n$ are bounded by $C_1$.
\end{prop}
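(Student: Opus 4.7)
The plan is to bound each of the three operators $\sW_n, \sX_n, \sY_n$ by estimating $\|\cdot f\|_1$ directly via Tonelli's theorem, performing a change of variables in $\RR$ (using condition (3) on admissibility together with the assumption on $\theta$) and in $B$ (using the Radon--Nikodym derivative), and finally applying one of the three uniform bounds listed in condition (4) of Definition \ref{defn:zeta}. Each of the three bounds in condition (4) is tailored to exactly one of the three operators, so the main conceptual step is matching them correctly.

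First, for $\sW_n$, since the function value $f(b',t)$ does not depend on $g$ or involve $R$, apply the triangle inequality and Tonelli to obtain
$$
\|\sW_n f\|_1 \le \int \int |f(b',t)| \left(\int \sum_{g \in \Gamma} \Upsilon_n(g,b,b') \, d\nu(b)\right) d\nu(b') \, d\theta(t).
$$
The first estimate in condition (4) bounds the inner bracket by $C$, giving $\|\sW_n f\|_1 \le C\|f\|_1$.

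Next, for $\sX_n$ and $\sY_n$, the $\RR$-argument of $f$ is shifted by $R(g^{-1},b')$. On the support of $\Upsilon_n$ this shift is bounded in absolute value by $C$ thanks to condition (3), so the assumption on $\theta$ gives
$$
\int |f(\cdot, t + R(g^{-1},b'))| \, d\theta(t) \le C' \int |f(\cdot, s)| \, d\theta(s)
$$
after the change $s = t + R(g^{-1},b')$. For $\sX_n$, after this $t$-substitution perform the change of variable $b'' = g^{-1}b'$ in the $b'$-integral, introducing a factor of $\frac{d\nu \circ g}{d\nu}(b'')$; what remains is
$$
C' \int \int |f(b'',s)| \left(\sum_{g \in \Gamma} \int \Upsilon_n(g,b,gb'') \frac{d\nu \circ g}{d\nu}(b'') \, d\nu(b)\right) d\nu(b'') \, d\theta(s),
$$
and the second estimate in condition (4) bounds the inner bracket by $C$. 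For $\sY_n$, after the $t$-substitution integrate out $b'$ first (leaving $\int \Upsilon_n(g,b,b')\,d\nu(b')$), then change variable $b'' = g^{-1} b$ in the $b$-integral; the third estimate in condition (4) bounds the resulting quantity by $C$.

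Combining the three cases, any constant $C_1 \ge CC'$ works simultaneously for $\sW_n, \sX_n, \sY_n$, and the constant is independent of $n$ because the constants $C$ and $C'$ in the admissibility conditions and in the choice of $\theta$ are. The main obstacle is purely bookkeeping: ensuring that for each operator the $b$ (or $b'$) change of variables produces exactly the integrand appearing in one of the three inequalities of condition (4), and checking that the $\theta$-shift bound $C'$ really applies because condition (3) gives a uniform bound on $|R(g^{-1},b')|$ on the support of the kernel.
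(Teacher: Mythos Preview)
Your proposal is correct and follows essentially the same approach as the paper: for each operator you apply Tonelli, perform the shift in the $\RR$-variable (invoking condition~(3) and the bound on $\frac{d\theta\circ A_{t_0}}{d\theta}$), change variables in the appropriate $B$-factor, and then match the resulting integrand to one of the three inequalities in condition~(4). The only cosmetic difference is that in the $\sY_n$ case the paper changes the $b$-variable before integrating out $b'$, whereas you do these in the opposite order; since $f(g^{-1}b,s)$ does not depend on $b'$, the two are equivalent.
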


\begin{proof}
Let $f\in L^1(\nu\times\theta)$ be nonnegative.

{\bf Case $\sW_n$}. Because $\sum_{g\in \Gamma} \int  \Upsilon_{n}(g,b,b')~d\nu(b) \le C$,
\begin{eqnarray*}
\| \sW_n f\| &=&   \iint |\sW_n f| ~d\nu d\theta =  \sum_{g\in \Gamma}\iiint f(b',t) \Upsilon_{n}(g,b,b') ~ d\nu(b')  d\nu(b)d\theta(t) \\
&\le&  C \iint f(b',t)  ~ d\nu(b')d\theta(t) = C \|f\|.
\end{eqnarray*}

 {\bf Case $\sX_n$}. 
Because  $\int \sum_{g\in \Gamma}  \Upsilon_n(g,b,gb') \frac{d\nu\circ g}{d\nu}(b') ~d\nu(b) \le C$,
\begin{eqnarray*}
\| \sX_n f\| &=&   \iint |\sX_n f| ~d\nu d\theta \\
&= &  \sum_{g\in \Gamma} \iiint f(g^{-1}b',t + R(g^{-1},b') ) \Upsilon_n(g,b,b') ~ d\nu(b')  d\nu(b)d\theta(t) \\
&= &  \sum_{g\in \Gamma} \iiint f(g^{-1}b',t  ) \Upsilon_n(g,b,b') \frac{d\theta \circ A_{-R(g^{-1},b')}}{d\theta}(t)~ d\nu(b')  d\nu(b)d\theta(t) \\
&\le &  C' \sum_{g\in \Gamma} \iiint f(g^{-1}b',t  ) \Upsilon_n(g,b,b') ~ d\nu(b')  d\nu(b)d\theta(t) \\
&=&  C' \sum_{g\in \Gamma} \iiint f(b',t  ) \Upsilon_n(g,b,gb') \frac{d\nu\circ g}{d\nu}(b') ~ d\nu(b')  d\nu(b)d\theta(t) \\
&\le& CC' \iint f(b',t  ) ~ d\nu(b') d\theta(t) = CC' \|f \|.
\end{eqnarray*}



{\bf Case $\sY_n$}.
Because $ \int \sum_{g\in \Gamma}  \Upsilon_n(g,gb,b') \frac{d\nu\circ g}{d\nu}(b)~d\nu(b') \le C$,
\begin{eqnarray*}
\| \sY_n f\| &=&   \iint |\sY_n f| ~d\nu d\theta \\
&= & \sum_{g\in \Gamma} \iiint f(g^{-1}b,t+ R(g^{-1},b')) \Upsilon_n(g,b,b') ~ d\nu(b')  d\nu(b)d\theta(t) \\
&= & \sum_{g\in \Gamma} \iiint f(g^{-1}b,t) \Upsilon_n(g,b,b') \frac{d\theta \circ A_{-R(g^{-1},b')}}{d\theta}(t) ~ d\nu(b')  d\nu(b)d\theta(t) \\
&\le &C' \sum_{g\in \Gamma} \iiint f(g^{-1}b,t) \Upsilon_n(g,b,b') ~ d\nu(b')  d\nu(b)d\theta(t) \\
&= &C' \sum_{g\in \Gamma} \iiint f(b,t) \Upsilon_n(g,gb,b') \frac{d\nu\circ g}{d\nu}(b)~ d\nu(b')  d\nu(b)d\theta(t) \\
&\le& CC'  \iint f(b,t) ~  d\nu(b)d\theta(t)  = CC' \| f\|.
\end{eqnarray*}


 \end{proof}

\begin{lem}
For every $f \in L^1(\nu\times\theta)$, 
\begin{eqnarray*}
\lim_{n\to\infty} \|f - \sW_n f \|= \lim_{n\to\infty} \| \sX_n f - \sY_n f \|  = 0. 
\end{eqnarray*}
\end{lem}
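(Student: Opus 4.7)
The plan is a standard continuity-plus-density argument. The key observation is that condition (1) of Definition \ref{defn:zeta} lets one rewrite $f - \sW_n f$ as the integral of a difference, and condition (2)(b) forces that difference to be taken between nearby points in $(B,d_B)$.

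First I would prove both identities for $f \in C_c(B\times\RR)$. Since $(B,d_B)$ is a compact metric space, any such $f$ is uniformly continuous; define a modulus of continuity in the $B$-variable by
$$\omega_f(\delta) := \sup\set{|f(b,t) - f(b',t)| : d_B(b,b') \le \delta,\ t\in\RR},$$
so that $\omega_f(\delta)\to 0$ as $\delta\to 0$. Using condition (1), $f(b,t) = \sum_{g}\int f(b,t)\Upsilon_n(g,b,b')\,d\nu(b')$, hence
$$f(b,t) - \sW_n f(b,t) = \sum_{g\in\Gamma} \int [f(b,t)-f(b',t)]\,\Upsilon_n(g,b,b')\,d\nu(b').$$
On the support of $\Upsilon_n$ we have $d_B(b,b')\le\beta(n)$ by (2)(b), so the integrand is bounded by $\omega_f(\beta(n))\cdot\Upsilon_n$. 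Integrating against the probability measure $\nu\times\theta$ and using (1) again gives $\|f-\sW_n f\|_1 \le \omega_f(\beta(n))\to 0$.

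For the second identity, expand
$$\sX_n f(b,t) - \sY_n f(b,t) = \sum_{g\in\Gamma} \int \bigl[f(g^{-1}b',t+R(g^{-1},b')) - f(g^{-1}b,t+R(g^{-1},b'))\bigr]\Upsilon_n(g,b,b')\,d\nu(b').$$
This time condition (2)(b) gives $d_B(g^{-1}b,g^{-1}b')\le\beta(n)$ on the support of $\Upsilon_n$, and the same pointwise bound $\omega_f(\beta(n))$ applies to the bracketed difference, because the second argument of $f$ is \emph{identical} on the two sides and no estimate on $R$ is required. Integrating as before yields $\|\sX_n f - \sY_n f\|_1 \le \omega_f(\beta(n))\to 0$.

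To pass from $C_c(B\times\RR)$, which is dense in $L^1(\nu\times\theta)$, to an arbitrary $f\in L^1(\nu\times\theta)$, choose $f_\vre\in C_c(B\times\RR)$ with $\|f-f_\vre\|_1<\vre$ and run a standard $3\vre$-argument using the uniform operator bounds $\|\sW_n\|,\|\sX_n\|,\|\sY_n\|\le C_1$ from Proposition \ref{prop:1bounded}; taking $n\to\infty$ and then $\vre\to 0$ closes both limits. There is no substantive obstacle here: the lemma is precisely the place where conditions (1) and (2)(b) of Definition \ref{defn:zeta} are used, and they have been designed to make both estimates immediate.
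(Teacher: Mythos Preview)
Your proof is correct and follows essentially the same approach as the paper: establish both limits first for compactly supported continuous $f$ using condition (2)(b) and uniform continuity, then extend to all of $L^1(\nu\times\theta)$ via density and the uniform operator bounds of Proposition \ref{prop:1bounded}. Your version is in fact slightly cleaner, since you extract the explicit bound $\|f-\sW_n f\|_1,\ \|\sX_n f - \sY_n f\|_1 \le \omega_f(\beta(n))$ directly from condition (1), whereas the paper phrases the same step as ``uniform convergence on compact sets plus bounded convergence theorem.''
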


\begin{proof} 
Suppose that $f$ is a continuous function on $B \times \RR$ with compact support. Because $\{\Upsilon_n\}_{n=1}^\infty$ is admissible, if $(g,b,b')$ is such that $\Upsilon_n(g,b,b') >0$ then $d_B(b,b') \le \beta(n)$ where $\lim_{n\to\infty} \beta(n) = 0$. Because $\Upsilon_n$ is a probability density, $\sW_n f $ converges to $f$ uniformly on compact sets. So the bounded convergence theorem implies $\lim_{n\to\infty} \|f - \sW_n f \|= 0$. 

Observe that
\begin{eqnarray*}
&&(\sX_n f - \sY_n f)(b,t)\\
 &=& \sum_{g\in \Gamma}  \int \big[f(g^{-1}b' ,t+R(g^{-1},b')) - f(g^{-1}b, t + R(g^{-1},b'))\big]\Upsilon_n(g,b,b') ~d\nu(b').
 \end{eqnarray*}
Because $d_B(g^{-1} b', g^{-1} b) \le \beta(n)$, uniform continuity of $f$ implies $\sX_nf - \sY_nf$ converges to zero pointwise and uniformly on compact sets. So the bounded convergence theorem implies $\lim_{n\to\infty}\| \sX_n f - \sY_n f \|= 0$. 

Since compactly supported continuous functions are dense in the norm topology on $L^1(\nu\times\theta)$ and the operators $\sW_n,\sX_n,\sY_n$ are uniformly bounded (by the previous proposition) the lemma follows.
\end{proof}
We can now prove Proposition \ref{prop:1Lambda} (which states that for any $\Gamma$-invariant $f \in L^1(\nu\times\theta)$, $\lim_{n\to\infty} \| f - \sZ_n f\| = 0$).

\begin{proof}[Proof of Proposition \ref{prop:1Lambda}]
Because $f$ is $\Gamma$-invariant, 
\begin{eqnarray*}
\sX_n f(b,t) &=& \sum_{g\in \Gamma} \int (g \cdot f)( b' ,t )\Upsilon_n(g,b,b') ~d\nu(b')\\
&=& \sum_{g\in \Gamma} \int f( b' ,t )\Upsilon_n(g,b,b') ~d\nu(b')= \sW_n f(b,t).
\end{eqnarray*}
Also,
\begin{eqnarray*}
\sY_n f(b,t) &=&  \sum_{g\in \Gamma} \int  (g \cdot f)(b, t + R(g^{-1},b')-R(g^{-1},b)) \Upsilon_n(g,b,b') ~d\nu(b')\\
&=&  \sum_{g\in \Gamma} \int  f(b, t + R(g^{-1},b')-R(g^{-1},b)) \Upsilon_n(g,b,b') ~d\nu(b')= \sZ_n f(b,t).
\end{eqnarray*}
The previous lemma now implies
\begin{eqnarray*}
0 &=& \lim_{n\to\infty}  \| \sX_n f - \sY_n f \|  =  \lim_{n\to\infty}  \| \sW_n f - \sZ_n f \| \\
&\ge&  \lim_{n\to\infty}  \|  f - \sZ_n f \| - \|f - \sW_n f\| = \lim_{n\to\infty}  \|  f - \sZ_n f \|.
\end{eqnarray*}

\end{proof}

\subsection{Proof of Theorem \ref{thm:AvEP}}\label{sec:end}

\begin{lem}
Suppose $T \in \RR$ is such that $e^T$ is not in the ratio set of $\Gamma \cc (B,\nu)$. Then there exists an $\epsilon>0$ and a $\Gamma$-invariant, positive measure set $A \subset B\times \RR$ such that for every $(b,t) \in A$ and every $t' \in (-\epsilon,\epsilon)$, $(b,t+T+t') \notin A$. 
\end{lem}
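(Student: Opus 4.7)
The plan is to take $A$ to be the $\Gamma$-saturation in $B \times \RR$ of a thickened rectangle $A_0' \times (0, \eta)$, where $A_0'$ comes from a witness of the hypothesis $e^T \notin RS$ and $\eta$ is a small thickness parameter. The first step is to translate the hypothesis into a usable form. Since the ratio set is closed under inversion (immediate from the identity $\alpha(g^{-1}, gb) = \alpha(g, b)^{-1}$ noted in the introduction), the number $e^{-T}$ also lies outside $RS$, so by the definition of the ratio set one can find $A_0 \subset B$ of positive measure and $\delta > 0$ such that for every $g \in \Gamma \setminus \{e\}$ the obstruction set
$$F_g \;:=\; \{b \in A_0 :\; gb \in A_0,\; |R(g, b) + T| < \delta\}$$
has $\nu$-measure zero. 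I would then delete these null sets by setting $A_0' := A_0 \setminus \bigcup_{g \ne e} F_g$, fix positive constants $\eta, \epsilon$ with $\eta + \epsilon < \min(\delta, |T|)$ (the case $T = 0$ is vacuous, since $1$ lies in the ratio set of every nonsingular action), and define
$$A \;:=\; \bigcup_{g \in \Gamma} g \cdot \bigl(A_0' \times (0, \eta)\bigr).$$

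By construction $A$ is $\Gamma$-invariant and of positive $\nu \times \theta$-measure. The content of the lemma is the shift-disjointness. Suppose toward a contradiction that $(b, t), (b, t + T + t') \in A$ for some $t' \in (-\epsilon, \epsilon)$. Unpacking the definition of $A$ produces $k, k' \in \Gamma$ with $kb, k'b \in A_0'$ and
\begin{align*}
t + R(k, b) &\in (0, \eta), & t + T + t' + R(k', b) &\in (0, \eta).
\end{align*}
Subtracting the two and invoking the cocycle identity $R(k', b) - R(k, b) = R(k'k^{-1}, kb)$, the element $h := k' k^{-1}$ satisfies $|R(h, kb) + T| < \eta + \epsilon < \delta$, while $kb$ and $h(kb) = k'b$ both lie in $A_0'$. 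If $h \ne e$, this forces $kb \in F_h$, contradicting $kb \in A_0' \subset A_0 \setminus F_h$. If $h = e$, the relation collapses to $|T + t'| < \eta$, which contradicts $|T + t'| \ge |T| - \epsilon > \eta$.

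I do not anticipate any serious obstacle; the argument is essentially bookkeeping. The two points that require care are (i) applying the cocycle identity in the right direction, so that the two-point difference $R(k',b) - R(k,b)$ of derivatives at the same point $b$ is repackaged as a single derivative $R(h, kb)$ evaluated at a point of $A_0$, which is what the obstruction sets $F_h$ control, and (ii) choosing $\eta$ and $\epsilon$ small enough that both complementary cases — $h \ne e$ (requiring $\eta + \epsilon < \delta$) and $h = e$ (requiring $\eta + \epsilon < |T|$) — are simultaneously ruled out by a single pair of constants.
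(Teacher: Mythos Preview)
Your proof is correct and follows essentially the same approach as the paper: take the $\Gamma$-saturation of a thin rectangle over a witness set for $e^{\pm T}\notin RS$, then use the cocycle identity to derive a contradiction from a putative pair $(b,t),(b,t+T+t')\in A$. The only cosmetic differences are that the paper works with $e^T$ directly (rather than passing to $e^{-T}$ via inversion symmetry) and absorbs the $h=e$ case into the choice of $\epsilon$ by arranging $3\epsilon<|T|$, whereas you handle that case explicitly.
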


\begin{proof}
Because $e^T$ is not in the ratio set, there exists a positive measure set $B' \subset B$ and an $\epsilon>0$ such that for every $b\in B'$ if $g\in \Gamma$ is such that $gb \in B'$ then 
$$R(g,b) \notin (T-3\epsilon,T+3\epsilon).$$

Let $A = \Gamma (B'\times (-\epsilon,\epsilon)) \subset B \times \RR$. It is clear that $A$ is $\Gamma$-invariant and has positive measure. Every element of $A$ has the form $\gamma(b,t)=(\gamma b, t+R(\gamma,b))$ for some $\gamma \in \Gamma, b \in B', t\in (-\epsilon,\epsilon)$. Suppose, to obtain a contradiction, that $(\gamma b, t+R(\gamma,b) + T + t') \in A$ for some $t' \in (-\epsilon,\epsilon)$. Then there exists $g \in \Gamma$, $b' \in B'$ and $t'' \in (-\epsilon,\epsilon)$ such that
$$(\gamma b, t+R(\gamma,b) + T + t')  = g(b',t'').$$
We multiply both sides on the left by $\gamma^{-1}$ to obtain
$$(b,t+T + t') = \gamma^{-1}g(b',t'') = (\gamma^{-1}gb', t'' + R(\gamma^{-1}g, b')).$$
Therefore $R(\gamma^{-1}g, b') = t+ T + t' -  t'' \in (T-3\epsilon,T+3\epsilon).$ 
This contradicts the choice of $A$. So $A$ satisfies the conclusion as required.
\end{proof}

The next lemma is Theorem \ref{thm:AvEP} with ``stable ratio set'' replaced with ``ratio set''.
\begin{lem}\label{lem:AvEP}
Let $(B,\nu)$ be a standard probability space. Suppose $\Gamma \cc (B,\nu)$ is a non-singular action and there is an admissible family $\{\Upsilon_n\}_{n=1}^\infty$ for $\Gamma \cc (B,\nu)$. Let $\zeta_{n}$ be the probability measure on $\RR$ defined by
$$\zeta_{n}(E) = \sum_{g \in \Gamma} \iint   1_E\left( R(g^{-1},b') - R(g^{-1},b) \right) \Upsilon_n(g,b,b') ~d\nu(b')d\nu(b).$$
Let $\zeta_\infty$ be any weak* limit of $\{\zeta_n\}_{n=1}^\infty$. Then for every $T$ in the support of $\zeta_\infty$, $e^T$ is in the ratio set of $\Gamma \cc (B,\nu)$.  

\end{lem}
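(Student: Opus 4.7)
The plan is to argue by contrapositive: assume $e^T \notin RS(\Gamma\cc(B,\nu))$, and deduce $T\notin\supp(\zeta_\infty)$. The previous lemma supplies an $\epsilon>0$ and a $\Gamma$-invariant positive-measure set $A\subset B\times\RR$ such that for every $(b,t)\in A$ and every $t'\in(-\epsilon,\epsilon)$, the point $(b,t+T+t')$ lies outside $A$. Setting $f=1_A\in L^1(\nu\times\theta)$, the $\Gamma$-invariance of $A$ yields $\Gamma$-invariance of $f$, so Proposition~\ref{prop:1Lambda} gives $\|f-\sZ_n f\|\to 0$.

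The next step is to convert this $L^1$-decay into a decay estimate for the measures $\zeta_n$. For $(b,t)\in A$ and any pair $(g,b')$ with $R(g^{-1},b')-R(g^{-1},b)\in(T-\epsilon,T+\epsilon)$, the avoidance property forces $f\bigl(b,\,t+R(g^{-1},b')-R(g^{-1},b)\bigr)=0$. Splitting the defining sum-integral of $\sZ_n f$ along this interval shows pointwise on $A$ that $\sZ_n f(b,t)\le 1-Q_n(b)$, where
\[
Q_n(b) := \sum_{g\in\Gamma}\int \mathbf{1}_{(T-\epsilon,T+\epsilon)}\bigl(R(g^{-1},b')-R(g^{-1},b)\bigr)\,\Upsilon_n(g,b,b')\,d\nu(b').
\]
Integrating $f-\sZ_n f\ge Q_n(b)\cdot\mathbf{1}_A$ against $\nu\times\theta$ gives
\[
\int_B Q_n(b)\,\theta(A_b)\,d\nu(b)\le \|f-\sZ_n f\|\longrightarrow 0,
\]
where $A_b=\{t:(b,t)\in A\}$. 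On the other hand, Fubini and the definition of $\zeta_n$ yield $\int Q_n\,d\nu=\zeta_n((T-\epsilon,T+\epsilon))$, and the Portmanteau theorem for weak$^*$ convergence gives $\liminf_n\zeta_n((T-\epsilon,T+\epsilon))\ge\zeta_\infty((T-\epsilon,T+\epsilon))=:c$, with $c>0$ whenever $T\in\supp(\zeta_\infty)$.

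To turn these two inputs into a contradiction I use a Markov-style splitting: for each $\delta>0$ let $B_\delta=\{b:\theta(A_b)\ge\delta\}$; then $\int_{B_\delta}Q_n\,d\nu\le\delta^{-1}\int Q_n\theta(A_b)\,d\nu\to 0$, and combining this with $\liminf_n\int Q_n\,d\nu\ge c$ yields $\liminf_n\int_{B\setminus B_\delta}Q_n\,d\nu\ge c$; since $Q_n\le 1$ this forces $\nu(B\setminus B_\delta)\ge c$ for every $\delta>0$. The main obstacle is therefore to establish $\nu(B\setminus B_\delta)\to 0$ as $\delta\to 0^+$, i.e.\ that $\theta(A_b)>0$ for $\nu$-almost every $b$. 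When $\Gamma\cc(B,\nu)$ is ergodic, the $\Gamma$-invariant set $B^\ast:=\{b:\theta(A_b)>0\}$ contains $B'$ and thus has full $\nu$-measure, completing the contradiction. For the non-ergodic case I would invoke Theorem~\ref{thm:nonergodic} to reduce to the ergodic one: it suffices to show $e^T\in RS(\Gamma\cc(B,\eta))$ for $\omega$-a.e.\ ergodic component $\eta$, and the avoidance set $A$ can be concentrated on the union of those components witnessing the failure of the ratio-set condition; the subtlety is that the admissible family is normalized with respect to $\nu$ rather than each individual $\eta$, which must be reconciled by a conditioning argument along the fibers of the ergodic decomposition.
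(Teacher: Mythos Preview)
Your strategy matches the paper's: argue by contradiction, invoke the previous lemma for the avoidance set $A$, set $f=1_A$, apply Proposition~\ref{prop:1Lambda}, and bound $\|f-\sZ_nf\|$ from below in terms of $\zeta_n((T-\epsilon,T+\epsilon))$. The paper does this last step in one line: writing $\sZ_n f(b,t)=\int f(b,t+t')\,d\zeta_{n,b}(t')$, it asserts
\[
\int_A \zeta_{n,b}\bigl((T-\epsilon,T+\epsilon)\bigr)\,d\nu(b)\,d\theta(t)\;=\;(\nu\times\theta)(A)\cdot\zeta_n\bigl((T-\epsilon,T+\epsilon)\bigr),
\]
justified only by $\int\zeta_{n,b}\,d\nu(b)=\zeta_n$. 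Your Markov splitting via $B_\delta=\{b:\theta(A_b)\ge\delta\}$ is a legitimate substitute for this identity (the left side is really $\int Q_n(b)\,\theta(A_b)\,d\nu(b)$, and the factorization would require $Q_n$ and $\theta(A_\cdot)$ to be uncorrelated). In the ergodic case your argument is complete and correct: $B^\ast=\{b:\theta(A_b)>0\}$ is $\Gamma$-invariant (since $A_{gb}$ is a translate of $A_b$ and $\theta$ is equivalent to Lebesgue), contains $B'$, hence has full $\nu$-measure; then $\nu(B\setminus B_\delta)\to 0$ as $\delta\downarrow 0$, contradicting $\nu(B\setminus B_\delta)\ge c$.

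Your non-ergodic paragraph, however, is only a plan. You correctly name the obstacle---the admissible family is normalized against $\nu$, not against an individual component $\eta$---but you do not carry out the ``conditioning argument along the fibers.'' In particular, restricting $\nu$ to a $\Gamma$-invariant union of components generally breaks condition~(1) of admissibility (the integral $\int\Upsilon_n(g,b,\cdot)\,d\nu$ need not remain $1$ after restriction), and the route via Theorem~\ref{thm:nonergodic} still has to produce, for $\omega$-a.e.\ $\eta$ in the bad set, a usable analogue of $\zeta_n$ and of Proposition~\ref{prop:1Lambda}. The paper does not split into cases here; it relies on the displayed identity throughout and so offers no roadmap for your reduction. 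As written, the non-ergodic case is a genuine gap.
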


\begin{proof}
Let $T$ be an element of the support of $\zeta_\infty$. To obtain a contradiction, suppose that the ratio set of $\Gamma \cc (B,\nu)$ does not contain $e^T$. 

Let $A \subset B\times \RR$ and $\epsilon>0$ be as in the previous lemma. Let $f$ be the characteristic function of $A$. Note
\begin{eqnarray*}
\sZ_nf (b,t) &=&\sum_{g\in \Gamma} \int  f(b, t + R(g^{-1},b') - R(g^{-1},b))\Upsilon_n(g,b,b')  ~ d\nu(b')=  \int f(b,t+t') ~d\zeta_{n,b}(t')
\end{eqnarray*}
where $\zeta_{n,b}$ is the probability measure on $\RR$ given by
$$\zeta_{n,b}(E) = \sum_{g \in \Gamma} \int   1_E\left( R(g^{-1},b') - R(g^{-1},b) \right) \Upsilon_n(g,b,b') ~d\nu(b').$$
By Proposition \ref{prop:1Lambda}, 
\begin{eqnarray*}
0 &=& \lim_{n\to\infty} \| f - \sZ_n f\| = \lim_{n\to\infty} \int \left| f(b,t) -  \int f(b,t+t') ~d\zeta_{n,b}(t') \right| ~d\nu(b)d\theta(t)\\
&\ge&  \lim_{n\to\infty} \int_A \left| f(b,t) -  \int f(b,t+t') ~d\zeta_{n,b}(t') \right| ~d\nu(b)d\theta(t)\\
&\ge&  \lim_{n\to\infty} \int_A  \zeta_{n,b}((T-\epsilon,T+\epsilon)) ~d\nu(b)d\theta(t)\\
&=& \lim_{n\to\infty} \nu\times \theta(A)\zeta_n((T-\epsilon,T+\epsilon)).
\end{eqnarray*}
The second inequality holds because, by the previous lemma, if $(b,t)\in A$ and $t' \in (T-\epsilon,T+\epsilon)$ then $(b,t+t') \notin A$ so $f(b,t)-f(b,t+t')=1$. The last equality holds because $\int \zeta_{n,b} ~d\nu(b) = \zeta_n$. Because $0<\nu\times\theta(A)$, $T$ must not be in the support of $\zeta_\infty$. This contradiction implies the lemma.
\end{proof}

The strategy for proving Theorem \ref{thm:AvEP} from Lemma \ref{lem:AvEP} is to show that, given any pmp action $\Gamma \cc (K,\kappa)$ there exists a topological model for this action and an admissible family $\{\Upsilon'_n\}_{n=1}^\infty$ for the product action $\Gamma \cc (B\times K,\nu\times\kappa)$ so that if $T$ is in the support of $\zeta_\infty$, then $T$ is also in the support of $\zeta'_\infty$ where $\zeta'_\infty$ is the measure corresponding to $\{\Upsilon'_n\}_{n=1}^\infty$. It will be helpful to know that $\Gamma \cc (K,\kappa)$ has a particularly nice topological model; which is the import of the next result.  

\begin{prop}\label{prop:model}
Let $\Gamma \cc (X,\mu)$ be an ergodic pmp action. Then there exists a compact metric space $(K,d_K)$ with a Borel probability measure $\kappa$ and a continuous action $\Gamma \cc K$ such that
\begin{itemize}
\item $\Gamma \cc (X,\mu)$ is measurably conjugate to $\Gamma \cc (K,\kappa)$
\item for every $\epsilon>0$ and $x,y \in K$,
$$1/3 \le \frac{\kappa(B(x,\epsilon))}{\kappa(B(y,\epsilon))} \le 3$$
where for example, $B(x,\epsilon)=\{z\in K:~d_K(x,z)\le \epsilon\}$.
\end{itemize}
\end{prop}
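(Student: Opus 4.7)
The plan is to first produce a continuous topological model of the action on a totally disconnected compact metric space via Gelfand duality, and then to replace its metric by an ultrametric built from a refining sequence of clopen partitions of nearly equal $\kappa_0$-mass, engineered so that balls of any fixed radius coincide with partition atoms. Concretely, choose a countable, $\Gamma$-invariant Boolean subalgebra of measurable subsets of $X$ that separates points modulo $\mu$; the associated indicator functions generate a countable $\Gamma$-invariant unital $*$-subalgebra $\cA \subset L^\infty(X,\mu)$ whose norm-closure is a separable commutative unital $C^*$-algebra. Its Gelfand dual $K_0$ is a totally disconnected compact metric space carrying a canonical continuous $\Gamma$-action, and $\mu$ corresponds to a Borel probability measure $\kappa_0$ with $\Gamma \cc (K_0,\kappa_0) \cong \Gamma \cc (X,\mu)$. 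If $(X,\mu)$ is purely atomic, ergodicity forces it to be a finite uniform probability space and the proposition is immediate with the discrete metric; henceforth assume $\kappa_0$ is non-atomic.

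Next I would construct inductively a refining sequence of clopen partitions $\cP_0 = \{K_0\} \le \cP_1 \le \cP_2 \le \cdots$ of $K_0$ satisfying (i) $|\cP_n| = 2^n$; (ii) every atom of $\cP_n$ has $\kappa_0$-mass in $[2^{-n}/\sqrt{3},\,\sqrt{3}\cdot 2^{-n}]$; and (iii) $\max_{A \in \cP_n} \operatorname{diam}_{d_0}(A) \to 0$. At the inductive step, given an atom $A \in \cP_n$, first partition $A$ into finitely many clopen pieces of $d_0$-diameter less than some $\varepsilon_n$ and $\kappa_0$-mass less than some $\delta_n$ (possible because $K_0$ is compact and totally disconnected and $\kappa_0$ is non-atomic), then greedily distribute these pieces into two clopen classes whose total masses differ by at most $\delta_n$. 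Choosing $\varepsilon_n,\delta_n \to 0$ sufficiently fast (e.g.\ $\delta_n \le 2^{-n-2}$) ensures that (i)--(iii) propagate through the induction.

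Finally, define the ultrametric $d_K$ on $K_0$ by $d_K(x,y) = 2^{-n}$ where $n \ge 0$ is the largest index with $x,y$ in a common atom of $\cP_n$ (and $d_K(x,x)=0$). Condition (iii) forces the $d_K$-balls, each of which is an atom of some $\cP_n$, to form a neighborhood basis for the original $d_0$-topology; hence $d_K$ induces the same topology as $d_0$ and the $\Gamma$-action remains continuous. For any $\epsilon>0$ and any $x \in K_0$, the closed $d_K$-ball $B(x,\epsilon)$ is an atom of $\cP_{n(\epsilon)}$, where $n(\epsilon)$ depends only on $\epsilon$ and not on $x$; so condition (ii) immediately yields $\kappa_0(B(x,\epsilon))/\kappa_0(B(y,\epsilon)) \le (\sqrt{3})^2 = 3$.

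The main obstacle in this outline is the inductive construction of clopen partitions with simultaneously controlled measures \emph{and} diameters; this uses the non-atomicity of $\kappa_0$ (to produce clopen subdivisions of arbitrarily small mass) together with compactness and total disconnectedness of $K_0$ (to produce clopen refinements of arbitrarily small $d_0$-diameter). Topological equivalence of $d_K$ and $d_0$, continuity of the $\Gamma$-action in the new metric, and the ball-measure ratio bound then all follow mechanically from the ultrametric structure.
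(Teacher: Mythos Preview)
Your strategy matches the paper's: pass to a continuous model on a totally disconnected compact space and define an ultrametric from nested clopen partitions of controlled mass, so that closed $d_K$-balls of any fixed radius are exactly the atoms of one partition. The paper works on $\{0,1\}^{\NN}$ and sandwiches its partitions $\alpha_k$ between cylinder partitions, $\beta_k \le \alpha_k \le \beta_{m_{k+1}}$, with each atom of mass in $[2^{-n_k-1},\,3\cdot 2^{-n_k-1}]$ for a flexible sequence $n_k$. Your execution, however, has two gaps.

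First, the greedy bisection does not force diameters to shrink: each atom of $\cP_{n+1}$ is a union of small clopen pieces inside its parent $A\in\cP_n$, and such a union can still have $d_0$-diameter equal to $\operatorname{diam}_{d_0}(A)$. So (iii) may fail, $\bigvee_n\cP_n$ need not separate points, and $d_K$ may be only a pseudo-metric (so $(K_0,d_K)$ need not be homeomorphic to $(K_0,d_0)$ and the $\Gamma$-action need not be $d_K$-continuous). The paper handles this by requiring $\alpha_k$ to refine the cylinder partition $\beta_k$; you would need to interleave genuine diameter-reducing refinements with the mass-balancing step, which forces you to drop the constraint $|\cP_n|=2^n$.

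Second, the bound (ii) does not propagate with $\delta_n\le 2^{-n-2}$. The recursion $m_{n+1}=m_n/2+e_n$ with $|e_n|\le\delta_n/2$ gives $|2^n m_n-1|\le\sum_{k<n}2^k\delta_k$, which with $\delta_k=2^{-k-2}$ equals $n/4$ and is unbounded; to keep $m_n\in[2^{-n}/\sqrt3,\,\sqrt3\cdot2^{-n}]$ for all $n$ you need $\sum_k 2^k\delta_k\le 1-1/\sqrt3$, hence $\delta_k$ must decay strictly faster than $2^{-k}$. The paper sidesteps error accumulation altogether by choosing a fresh target $2^{-n_{k+1}}$ at each stage and using a one-shot coarsening lemma: refine to a very fine cylinder partition $\beta_{m_{k+1}}$, then regroup its atoms within each piece of $\alpha_k\vee\beta_{k+1}$ to land in $[2^{-n_{k+1}-1},\,3\cdot2^{-n_{k+1}-1}]$ exactly, so nothing propagates.
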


\begin{lem}
Let $(X,\mu)$ be a finite measure space, $n \in \N$, $\mu(X)\ge 2^{-n-1}$ and $\beta$ a finite Borel partition of $X$ such that for every $B \in \beta$, $\mu(B) \le 2^{-n-1}$. Then there exists a partition $\alpha$ with $\alpha \le \beta$ and $|\mu(A) - 2^{-n}| \le 2^{-n-1}$ for every $A \in \alpha$.
\end{lem}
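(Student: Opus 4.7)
The target interval for each atom of $\alpha$ is $[2^{-n}-2^{-n-1},\,2^{-n}+2^{-n-1}] = [2^{-n-1},\,3\cdot 2^{-n-1}]$, so the plan is to greedily merge atoms of $\beta$ into blocks whose measure lies in $[2^{-n-1},\,2^{-n}]$, and then fix up the last block if it turned out too small.

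First I would enumerate the atoms of $\beta$ as $B_1, B_2, \ldots, B_k$ in some fixed order. Then I would build blocks $A_1, A_2, \ldots$ one at a time as follows. To form $A_j$, append $B_i$'s in order to the current block (starting with the smallest unused index) and stop as soon as the accumulated measure first reaches $2^{-n-1}$. Because every single atom has measure at most $2^{-n-1}$, the measure of $A_j$ just before the last atom was appended was strictly less than $2^{-n-1}$, so after appending it the measure lies in $[2^{-n-1},\,2^{-n-1}+2^{-n-1}]=[2^{-n-1},\,2^{-n}]$. Continuing, every completed block $A_1,\dots,A_{m-1}$ has measure in $[2^{-n-1},2^{-n}]$, which is already inside the target interval.

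The only thing that can go wrong is the final block $A_m$: the remaining atoms might not suffice to push its accumulated measure up to $2^{-n-1}$, so we might end with $\mu(A_m)<2^{-n-1}$. In that case I would simply merge $A_m$ into $A_{m-1}$. The merged block has measure at most $2^{-n}+2^{-n-1}=3\cdot 2^{-n-1}$ and at least $\mu(A_{m-1})\ge 2^{-n-1}$, so it lies in the target interval. The only remaining case is $m=1$, i.e.\ the greedy process never completed a block; this means the whole space satisfies $\mu(X)<2^{-n-1}+$(one more atom)$\le 2^{-n}$, but then the hypothesis $\mu(X)\ge 2^{-n-1}$ gives $\mu(X)\in[2^{-n-1},2^{-n}]\subset[2^{-n-1},3\cdot 2^{-n-1}]$, and we may take $\alpha=\{X\}$.

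I do not expect any serious obstacle here; the only subtlety is the edge case of the last block, handled by the merge step above. The partition $\alpha$ so constructed is coarser than $\beta$ (each element of $\alpha$ is a union of $\beta$-atoms) and every $A\in\alpha$ satisfies $|\mu(A)-2^{-n}|\le 2^{-n-1}$, as required.
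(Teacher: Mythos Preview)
Your argument is correct and rests on the same greedy idea as the paper's proof: accumulate atoms of $\beta$ until the running total first reaches $2^{-n-1}$, which produces a block of measure in $[2^{-n-1},2^{-n}]$. The paper organizes this as an induction on $|\beta|$---peel off one such block $A$, observe that $\mu(X\setminus A)\ge 2^{-n-1}$, and recurse---while you iterate and then merge a too-small final block into its predecessor; the inductive version sidesteps that merge because its base case is precisely $\mu(X)\in[2^{-n-1},3\cdot 2^{-n-1}]$. One minor clean-up: your $m=1$ sub-case in the ``go wrong'' scenario is actually vacuous, since if the first block never reached $2^{-n-1}$ then $\mu(X)<2^{-n-1}$, contrary to hypothesis.
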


\begin{proof}
We prove this by induction on the cardinality of $\beta$. If $|\beta|=1$ then set $\alpha=\beta$. If $|\mu(X) - 2^{-n}| \le 2^{-n-1}$ then we may set $\alpha$ equal to the trivial partition. So let us assume that $\mu(X)> 2^{-n} + 2^{-n-1}$ and $|\beta| \ge 2$. Then there exists a set $A \subset X$ which is a union of atoms of $\beta$ such that $2^{-n-1} \le \mu(A) \le 2^{-n}$.  Then $\mu(X \setminus A) \ge 2^{-n-1}$. So the induction hypothesis, applied to $X\setminus A$ and $\beta' =\{B\in \beta:~ B \cap A = \emptyset\}$ implies the existence of a partition $\alpha' \le \beta'$ such that $|\mu(A') - 2^{-n}| \le 2^{-n-1}$ for every $A' \in \alpha'$. Set $\alpha= \alpha' \cup \{A\}$ to finish the lemma.
\end{proof}

\begin{proof}[Proof of Proposition \ref{prop:model}]
If $(X,\mu)$ is purely atomic then because the action is ergodic, there exist elements $x_1,\ldots, x_n \in X$ such that $\mu(\{x_i\})=1/n$ for each $i$. In this case, we may let $K=\{1,\ldots,n\}$, $d_K(i,j)=1$ if $i\ne j$ and $\Gamma$ acts on $K$ by $g \cdot i = j \Leftrightarrow gx_i=x_j$. So we may assume $(X,\mu)$ is purely non-atomic. 

It is well-known that there exists a topological model for the action. Briefly, this is achieved by constructing a $\Gamma$-invariant separable subalgebra $\cA$ of $L^\infty(X,\mu)$ which is dense in $L^2(X,\mu)$ and then letting $\hat{X}$ denote the maximal ideal space of $\cA$. This procedure produces  a fully supported Borel probability measure $\kappa$ on the Cantor set, which we realize as the product space $\{0,1\}^\N$, and a continuous action $\Gamma \cc \{0,1\}^\N$ such that $\Gamma \cc (\{0,1\}^\N,\kappa)$ is measurably conjugate with $\Gamma \cc (X,\mu)$. 

For $k\ge 1$, let $\beta_k$ be the partition of $\{0,1\}^\N$ determined by the condition: for any $x,y \in \{0,1\}^\N$, $x$ and $y$ are in the same partition element of $\beta_k$ if and only if $x_i=y_i$ for all $1\le i \le k$. Also, let $\beta_0$ be the trivial partition.

We claim that there exist sequences $\{n_k\}_{k=0}^\infty, \{m_k\}_{k=0}^\infty$ of nonnegative integers and finite partitions $\{\alpha_{k}\}_{k=0}^\infty$ satisfying $(\forall k \ge 1)$
\begin{enumerate}
\item $n_{k+1} >n_k$, $m_{k+1} > m_k$;
\item $\beta_{k} \le \alpha_{k} \le \alpha_{k+1} \le \beta_{m_{k+1}}$;
\item for every $A \in \alpha_k$, $|\kappa(A) - 2^{-n_k}| \le 2^{-n_k-1}$;
\end{enumerate}
We will prove this by induction on $k$. The base case is handled by setting $n_0=m_0=0$ and $\alpha_0$ equal to the trivial partition. So suppose that $n_0,\ldots, n_k, m_0,\ldots, m_{k}$ and $\alpha_0, \ldots, \alpha_k$ have been chosen satisfying the above. Although $m_{k+1}$ has not been chosen at this stage, we do require that $\alpha_{k}$ is refined by $\beta_p$ for some $p$. 

Let $n_{k+1}>n_k$ be an integer so that for every $A \in \alpha_{k} \vee \beta_{k+1}$, $\kappa(A) \ge 2^{-n_{k+1}-1}$ (this exists because $\kappa$ is fully supported). Let $m_{k+1}>m_k$ be a sufficiently large integer so that $\beta_{m_{k+1}}$ refines $\alpha_{k}$ and every $B \in \beta_{m_{k+1}}$ satisfies $\kappa(B)\le 2^{-n_{k+1}-1}$. By applying the previous lemma to each atom $X$ of $\alpha_{k} \vee \beta_{k+1}$ and the restriction of $\beta_{m_{k+1}}$ to $X$, we see that there exists a partition $\alpha_{k+1}$ satisfying
\begin{itemize}
\item $\alpha_{k} \vee \beta_{k+1} \le \alpha_{k+1} \le \beta_{m_{k+1}}$ 
\item for every $A \in \alpha_{k+1}$, $|\kappa(A) - 2^{-n_{k+1}}| \le 2^{-n_{k+1} -1}$.
\end{itemize}
This establishes the claim.

Now set $K=\{0,1\}^\N$ and $d_K(x,y) = \frac{1}{k+1}$ where $k\ge 0$ is the largest integer such that $x$ and $y$ are in the same atom of $\alpha_k$. Because $\beta_k \le \alpha_k$, it follows that $\bigvee_{k=1}^\infty \alpha_k$ is the partition into points, which implies $d_K$ is a metric (instead of a pseudo-metric). Because $\alpha_k \le \beta_{m_{k+1}}$, it follows that each atom of $\alpha_k$ is clopen and therefore $d_K$ is continuous with respect to the product topology on $\{0,1\}^\N$. Finally, we note that for any $x \in K$ and any $\epsilon>0$ if $k$ is the smallest nonnegative integer such that $\frac{1}{k+1} \le \epsilon$ then $B(x,\epsilon)=B(x,\frac{1}{k+1}) \in \alpha_k$. Therefore 
$$|\kappa(B(x,\epsilon)) - 2^{-n_k}| \le 2^{-n_k-1}.$$
So if $y\in K$ is any other point then
$$1/3 \le \frac{\kappa(B(x,\epsilon))}{\kappa(B(y,\epsilon))} \le 3.$$
\end{proof}

\begin{proof}[Proof of Theorem \ref{thm:AvEP}]

By Theorem \ref{thm:nonergodic} it suffices to show that $e^T$ is in the ratio set of $\Gamma \cc (B\times K, \nu\times \kappa)$ whenever $\Gamma \cc (K,\kappa)$ is an {\em ergodic} pmp action. So let $\Gamma \cc (K,\kappa)$ be an ergodic pmp action. By Proposition \ref{prop:model}, we may assume that $(K,d_K)$ is a compact metric space such that for every $\epsilon>0$ and $x,y \in K$,
$$1/3 \le \frac{\kappa(B(x,\epsilon))}{\kappa(B(y,\epsilon))} \le 3.$$


Given an integer $n\ge 1$ and $g\in \Gamma$, let $0<\rho(n,g)<1/n$ be such that for every $x,y \in K$ with $d_K(x,y)\le \rho(n,g)$, $d_K(g^{-1}x,g^{-1}y) \le 1/n$. 

Define $\Upsilon'_n:\Gamma \times B\times K \times B \times K \to \RR$ by 
$$\Upsilon'_n(g,b,k,b',k') := \frac{1_{B( k,\rho(n,g))}(k')\Upsilon(g,b,b')}{ \kappa(B(k,\rho(n,g))) }.$$
It is an easy exercise using the above estimates to check that $\{\Upsilon'_n\}_{n=1}^\infty$ is an admissible family for $G \cc (B\times K,\nu \times \kappa)$ with $d_{B\times K}$, a metric on $B\times K$, given by $d_{B\times K}((b,k),(b',k'))=d_B(b,b')+ d_K(k,k')$. Because $\Gamma \cc (K,\kappa)$ is measure-preserving,
$$R(g,b,k):= \log \frac{d(\nu\times \kappa)\circ g}{d(\nu\times \kappa)}(b,k) = R(g,b).$$
So for any $E \subset \RR$,
\begin{eqnarray*}
\zeta_{n}(E) &=& \sum_{g \in \Gamma} \iint   1_E\left( R(g^{-1},b') - R(g^{-1},b) \right) \Upsilon_n(g,b,b') ~d\nu(b')d\nu(b)\\
&=&  \sum_{g \in \Gamma} \iint   1_E\left( R(g^{-1},b',k') - R(g^{-1},b,k) \right) \Upsilon'_n(g,b,k,b',k') ~d\nu\times \kappa(b',k')d\nu \times \kappa(b,k).
\end{eqnarray*}
So Lemma \ref{lem:AvEP} above implies the ratio set of the action $\Gamma \cc (B \times K,\nu\times \kappa)$ contains $e^T$.  Since $\Gamma  \cc (K,\kappa)$ is arbitrary, this proves the result.

\end{proof}

\section{Gromov hyperbolic spaces}\label{sec:Gromov}  

The purpose of this section is to set notation and review Gromov hyperbolic spaces.

Let $(X,d_X)$ be a metric space. The {\em Gromov product} of $x,y \in X$ relative to $z\in X$ is
$$(x|y)_z := \frac{1}{2}\Big( d_X(x,z) + d_X(y,z) - d_X(x,y)\Big).$$
For $\delta>0$, the space $(X,d_X)$ is {\em $\delta$-hyperbolic} if 
\begin{eqnarray}\label{eqn:gromov}
(x|y)_w \ge \min\{ (x|z)_w, (y|z)_w\} - \delta,\quad  \forall x,y,w,z \in X.
\end{eqnarray}

\subsection{The Gromov boundary}

Let $(X,d_X)$ be a $\delta$-hyperbolic space. A sequence $\{x_i\}_{i=1}^\infty$ in $X$ is a {\em Gromov sequence} if
$$\lim_{i,j \to \infty} (x_i|x_j)_z = +\infty$$
for some (and hence, any) basepoint $z\in X$. Two Gromov sequences $\{x_i\}_{i=1}^\infty$, $\{y_i\}_{i=1}^\infty $ are {\em equivalent} if $\lim_{i\to\infty} (x_i|y_i)_z = +\infty$ with respect to some (and hence any) basepoint $z$. It is an exercise to show that this defines an equivalence relation (assuming $(X,d_X)$ is $\delta$-hyperbolic). The {\em Gromov boundary} is the space of equivalence classes of Gromov sequences. We denote it by $\partial X$.  Let $\overline{X}$ denote $X \cup \partial X$. 

The Gromov product extends to $\partial X$ as follows. Let $p,z \in X$ and $\xi, \eta \in \partial X$. Define
$$(\xi|p)_z := \inf \liminf_{i \to \infty} (x_i|p)_z, \quad (\xi|\eta)_z := \inf \liminf_{i \to \infty} (x_i|y_i)_z$$
where the infimums are over all sequences $\{x_i\}_{i=1}^\infty \in \xi, \{y_i\}_{i=1}^\infty\in \eta$. By \cite[ Lemma 5.11]{Va05}
\begin{eqnarray}\label{eqn:gromovproduct}
\limsup_{i \to \infty} (x_i|y_i)_z -2\delta \le (\xi|\eta)_z \le \liminf_{i \to \infty} (x_i|y_i)_z
\end{eqnarray}
for any sequences $\{x_i\}_{i=1}^\infty \in \xi, \{y_i\}_{i=1}^\infty\in \eta$. These inequalities also hold if $\eta = p \in X$ and $y_i$ is any sequence with $\lim_{i\to\infty} y_i=p$. According to \cite[Proposition 5.12]{Va05},  inequality (\ref{eqn:gromov}) extends to $x,y \in \partial X$.

In \cite{BH99} it is shown that if $\epsilon>0$ is sufficiently small and $\bar{d}_\epsilon:\overline{X} \times \overline{X} \to \RR$ is defined by
$$\bar{d}_\epsilon(\xi,\eta):=e^{-\epsilon (\xi|\eta)_z}$$
then there exists a metric $\bar{d}$ on $\overline{X}$ and constants $A,B>0$ such that $A\bar{d}_\epsilon \le \bar{d} \le B\bar{d}_\epsilon.$ Any such metric is called a {\em visual metric}. 


\subsection{Quasi-geodesics and quasi-isometries}\label{sec:defn}

\begin{defn}
Let $(X,d_X)$, $(Y,d_Y)$ be metric spaces. For $\lambda \ge 1$ and $c\ge 0$, a map $\phi:X \to Y$ is a {\em $(\lambda,c)$-quasi-isometric embedding} if for all $x,y \in X$,
$$\lambda^{-1} d_X(x,y) - c \le d_Y(\phi(x),\phi(y)) \le \lambda d_X(x,y) + c.$$
\end{defn}

\begin{defn}
Let $(X,d_X)$ be a Gromov hyperbolic space. A $(\lambda,c)$-quasi-isometric embedding $q$ of an interval $I \subset \RR$ into $X$  is called a {\em $(\lambda,c)$-quasi-geodesic}. If $I$ is a finite interval and its endpoints are mapped to $x,y \in X$ respectively, then we say it is a quasi-geodesic {\em from $x$ to $y$}. If $I=(-\infty, \infty)$ and $\lim_{t \to -\infty} q(t) = \xi_-, 
\lim_{t \to +\infty} q(t) = \xi_+$, then we say $q$ is a quasi-geodesic from $\xi_-$ to $\xi_+$. A similar definition holds for half-infinite intervals.

The space $(X,d_X)$ is {\em $(\lambda,c)$-quasi-geodesic} if for every pair of points $x,y \in X\cup \partial X$ there exists a $(\lambda,c)$-quasi-geodesic from $x$ to $y$.  By abuse of notation, we sometimes identify a quasi-geodesic with its image. For example, it is convenient to denote by $[x,y]$ a $(\lambda,c)$-quasi-geodesic from $x$ to $y$ ($x,y \in X$).
\end{defn}



\begin{defn}[Hyperbolic groups]
$(\Gamma, d)$ is a {\em hyperbolic group} if $d$ is a left-invariant metric on $\Gamma$, $\Gamma$ is a countable discrete group, and $(\Gamma,d)$ is a proper $\delta$-hyperbolic metric space for some $\delta>0$. It is {\em uniformly quasi-geodesic} if it is $(1,c)$-quasi-geodesic for some $c>0$. It is {\em non-elementary} if it is not a finite extension of a cyclic group. 
\end{defn}

\subsection{Quasi-conformal measures and horofunctions}

Let $(X,d_X)$ be a $\delta$-hyperbolic metric space. Choose a basepoint $x_0\in X$. 
\begin{lem}
Let $\xi \in \partial X$ and suppose $\{y_i\}, \{z_i\} \subset X$ are two sequences converging to $\xi$ (w.r.t. the topology on $\overline{X}$). Then for any $w \in X$,
$$\limsup_{i\to\infty} \left| d_X(y_i,w) - d_X(y_i,x_0) - \Big( d_X(z_i,w) - d_X(z_i,x_0) \Big) \right| \le 4\delta.$$
\end{lem}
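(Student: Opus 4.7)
The plan is to rewrite the quantity inside the absolute value in terms of Gromov products based at $x_0$ and then apply the four-point $\delta$-hyperbolicity inequality (\ref{eqn:gromov}). Using the identity $2(y_i|w)_{x_0} = d_X(y_i,x_0) + d_X(w,x_0) - d_X(y_i,w)$, one immediately obtains $d_X(y_i,w) - d_X(y_i,x_0) = d_X(w,x_0) - 2(y_i|w)_{x_0}$, and similarly with $z_i$ in place of $y_i$. Subtracting, the $d_X(w,x_0)$ terms cancel and the expression inside the absolute value collapses to $2\bigl((z_i|w)_{x_0} - (y_i|w)_{x_0}\bigr)$. So the lemma reduces to showing
$$\limsup_{i\to\infty}\,\bigl|(y_i|w)_{x_0} - (z_i|w)_{x_0}\bigr|\, \le\, 2\delta.$$

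Next I would record two elementary facts. Since $\{y_i\}$ and $\{z_i\}$ both converge to the same $\xi \in \partial X$, they are equivalent Gromov sequences, so $(y_i|z_i)_{x_0} \to +\infty$ as $i \to \infty$. On the other hand, from the triangle inequality $d_X(y_i,w) \ge d_X(y_i,x_0) - d_X(w,x_0)$ one sees that $(y_i|w)_{x_0} \le d_X(w,x_0)$ for every $i$, and symmetrically $(z_i|w)_{x_0} \le d_X(w,x_0)$. Thus these two Gromov products stay uniformly bounded in $i$ while $(y_i|z_i)_{x_0}$ tends to infinity.

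Finally, I would apply the $\delta$-hyperbolicity inequality (\ref{eqn:gromov}) to the four points $y_i, w, z_i$ with basepoint $x_0$ to obtain
$$(y_i|w)_{x_0} \ge \min\{(y_i|z_i)_{x_0},\, (z_i|w)_{x_0}\} - \delta.$$
For all sufficiently large $i$, the first term in the minimum exceeds the second by the two facts above, so the minimum equals $(z_i|w)_{x_0}$, yielding $(y_i|w)_{x_0} \ge (z_i|w)_{x_0} - \delta$. Swapping the roles of $y_i$ and $z_i$ gives the reverse inequality, so $\bigl|(y_i|w)_{x_0} - (z_i|w)_{x_0}\bigr| \le \delta$ eventually. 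Doubling gives the bound $2\delta$, which is comfortably within the required $4\delta$ (the looser $4\delta$ in the statement presumably anticipates use in the less sharp form $(\xi|\eta)_z$ extending to the boundary, cf. (\ref{eqn:gromovproduct})). This is a direct manipulation of Gromov products, and I do not foresee any genuine obstacle.
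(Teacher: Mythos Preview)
Your proof is correct and follows essentially the same approach as the paper: both reduce the expression to $2\bigl|(y_i|w)_{x_0} - (z_i|w)_{x_0}\bigr|$ via the same Gromov-product identity, then invoke hyperbolicity. The only difference is that the paper cites the boundary estimate (\ref{eqn:gromovproduct}) directly, while you apply the four-point inequality (\ref{eqn:gromov}) together with $(y_i|z_i)_{x_0}\to\infty$; your route is slightly more elementary and yields the sharper constant $2\delta$.
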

\begin{proof}
Observe that
$$d_X(y_i,w) - d_X(y_i,x_0)  = d_X(w,x_0) - 2(y_i| w)_{x_0}.$$
A similar statement holds for $z_i$ in place of $y_i$. Thus,
$$\left| d_X(y_i,w) - d_X(y_i,x_0) - \Big( d_X(z_i,w) - d_X(z_i,x_0) \Big) \right|= 2\left| (y_i|w)_{x_0} - (z_i|w)_{x_0} \right|.$$
The lemma now follows from (\ref{eqn:gromovproduct}).
\end{proof}
For $\xi \in \partial X$, define $h_\xi:X \to \RR$ by
$$h_\xi(z): = \inf \liminf_{n\to\infty} d_X(z,y_i) - d_X(y_i,x_0)$$
where the infimum is over all sequences $\{y_i\} \subset X$ which converges to $\xi$. This is the {\em horofunction} associated to $\xi$ (and the basepoint $x_0$). By the previous lemma, if $\{y_i\}$ is any sequence converging to $\xi$ and $z \in X$ is arbitrary then
\begin{eqnarray}\label{eqn:horo}
\limsup_{i\to\infty} \left|h_\xi(z) - \Big(d_X(z,y_i) - d_X(y_i,x_0)\Big) \right| \le 4 \delta.
\end{eqnarray}

\begin{defn}[Quasi-conformal measure]\label{defn:qc}
Suppose $(\Gamma,d)$ is a Gromov hyperbolic group. A Borel probability measure $\nu$ on $\partial \Gamma$ is {\em quasi-conformal} if there are constants $\fh,C>0$ such that for any $g\in \Gamma$ and a.e. $\xi \in \partial \Gamma$,
$$C^{-1} \exp(-\fh h_\xi(g^{-1})) \le \frac{d\nu  \circ g}{d\nu}(\xi) \le C \exp(-\fh h_\xi(g^{-1})).$$
We will call $\fh>0$ the {\em quasi-conformal constant} associated to $\nu$.
\end{defn}

It is well-known that if $d$ comes from a word metric on $\Gamma$ (or more generally, any geodesic metric) then there is a quasi-conformal measure on $\partial \Gamma$ \cite{Co93}. More generally: 

\begin{lem}\label{lem:conformal}
Let $(\Gamma,d)$ be a non-elementary, uniformly quasi-geodesic, hyperbolic group. Then there exists a quasi-conformal measure $\nu$ on $\partial \Gamma$. Moreover, any two quasi-conformal measures are equivalent. Also if $\fh$ is the quasi-conformal constant of $\nu$ then there is a constant $C>0$ such that
\begin{enumerate}
\item If $B(g,r)$ denotes the ball of radius $r$ centered at $g\in \Gamma$ then 
$$C^{-1} e^{ \fh r} \le |B(g,r)| \le C e^{\fh r},\quad \forall g\in \Gamma, r>0.$$

\item $$C^{-1}e^{-\fh n} \le \nu\left(\{ \xi' \in \partial \Gamma:~ (\xi|\xi')_e \ge n \}\right) \le C e^{-\fh n}, \quad \forall n>0, \xi \in \partial \Gamma.$$
\end{enumerate}
\end{lem}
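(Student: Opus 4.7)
The plan is to follow the Patterson--Sullivan construction adapted by Coornaert \cite{Co93} to Gromov hyperbolic groups, modified only slightly to accommodate the fact that $d$ is uniformly quasi-geodesic rather than genuinely geodesic.

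First I would construct a candidate measure. Let $\fh := \limsup_{n\to\infty} \frac{1}{n}\log |\{g\in \Gamma: d(e,g)\le n\}|$ be the exponential growth rate; non-elementarity together with the quasi-geodesic assumption gives $0<\fh<\infty$. For $s>\fh$ the Poincar\'e series $\sum_{g\in \Gamma} e^{-sd(e,g)}$ converges, and by Patterson's trick (multiplying each term by a slowly varying factor if necessary) we may arrange that it diverges at $s=\fh$. Define the probability measure on $\Gamma\cup\partial\Gamma$
$$\mu_s := \frac{1}{\sum_{h\in \Gamma} e^{-sd(e,h)}}\sum_{g\in \Gamma} e^{-sd(e,g)}\delta_g.$$
As $s\searrow \fh$, divergence at $\fh$ forces every weak* limit $\nu$ to be supported on $\partial\Gamma$. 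A direct computation using the horofunction estimate (\ref{eqn:horo}) shows that any such limit satisfies the quasi-conformal relation of Definition \ref{defn:qc} with parameter $\fh$, since the ratio $\mu_s\circ g/\mu_s$ at a point $h$ equals $\exp(-s(d(e,g^{-1}h)-d(e,h)))$ which converges along any sequence $h\to \xi$ to $\exp(-\fh h_\xi(g^{-1}))$ up to a bounded factor.

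For the volume estimate (1), the upper bound $|B(e,r)| \le Ce^{\fh r}$ follows from the convergence of the Poincar\'e series at any $s>\fh$ via dyadic summation. The lower bound is Coornaert's theorem on purely exponential growth: one uses the non-elementary hypothesis to find a quasi-convex free sub-semigroup whose orbit injects into $\Gamma$ with controlled shadows; transferring the density of $\nu$ back via the quasi-conformal relation lower bounds $|B(e,r)|$. Left-invariance of $d$ then gives $|B(g,r)|=|B(e,r)|$. For the shadow estimate (2), I would fix $\xi\in\partial\Gamma$ and a $(1,c)$-quasi-geodesic ray from $e$ to $\xi$; let $g_n$ be a point on this ray with $d(e,g_n)\approx n$. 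Hyperbolicity and (\ref{eqn:gromovproduct}) give that the set $\{\xi':(\xi|\xi')_e\ge n\}$ coincides, up to bounded error in $n$, with $S_n:=\{\xi':h_{\xi'}(g_n^{-1})\le -n+O(\delta,c)\}$. By the quasi-conformal relation, $\nu(S_n)$ is comparable to $e^{-\fh n}\cdot\nu(g_n^{-1}S_n)$, and $g_n^{-1}S_n$ contains and is contained in visual balls about $g_n^{-1}\xi$ of uniformly bounded radius, which have $\nu$-mass bounded between two positive constants by a standard compactness/covering argument combined with the volume estimate (1).

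For uniqueness up to equivalence, given two quasi-conformal measures $\nu_1,\nu_2$ with constants $\fh_1,\fh_2$, evaluating the identity $\log(d\nu_i\circ g/d\nu_i)=-\fh_i h_\xi(g^{-1})+O(1)$ along $g_n\to\xi$ with $h_\xi(g_n^{-1})\to-\infty$ forces $\fh_1=\fh_2$; then $d\nu_1/d\nu_2$ is $\Gamma$-invariant up to a uniformly bounded multiplicative factor, and ergodicity of $\Gamma\cc(\partial\Gamma,\nu_1)$ (a consequence of double-ergodicity on $\partial\Gamma\times\partial\Gamma$, a standard Hopf-style argument for hyperbolic groups) forces this derivative to be essentially bounded above and below, so $\nu_1\sim\nu_2$. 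The main obstacle is the absence of honest geodesics: every step above rests on the fact that in a $\delta$-hyperbolic space any $(1,c)$-quasi-geodesic fellow-travels a geodesic (in any geodesic completion) up to Hausdorff distance depending only on $\delta,c$, so each sharp equality in Coornaert's treatment must be systematically replaced by an estimate valid up to additive error $O(\delta,c)$. The most delicate such replacement is the identification of $\{(\xi|\xi')_e\ge n\}$ with $S_n$ in the shadow estimate, which requires careful bookkeeping with (\ref{eqn:gromovproduct}) and (\ref{eqn:horo}).
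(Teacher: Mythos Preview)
Your proposal is essentially correct in outline, but it takes a very different route from the paper. The paper's proof of this lemma is two lines: it cites \cite[Theorem 2.3]{BHM11}, which provides exactly the existence, uniqueness, ball-growth and shadow estimates in the stated generality of \emph{quasi-ruled} hyperbolic spaces, and then observes (Lemma~\ref{lem:quasi-ruled}) that a $(1,c)$-quasi-geodesic space is automatically quasi-ruled. In other words, the paper simply verifies a hypothesis and invokes a black box; you instead propose to unpack that black box and redo the Patterson--Sullivan/Coornaert construction by hand. That is a legitimate and self-contained strategy, and the careful bookkeeping you anticipate (replacing geodesics by $(1,c)$-quasi-geodesics via Morse-type stability, tracking the additive $O(\delta,c)$ errors through the shadow lemma) is exactly what \cite{BHM11} carries out.

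Two points in your sketch deserve tightening. First, in the shadow estimate you write $h_{\xi'}(g_n^{-1})$ where the relevant quantity is $h_{\xi'}(g_n)$; the identification of $\{\xi':(\xi|\xi')_e\ge n\}$ with a horoball-type set goes through $2(\xi'|g_n)_e\doteq d(e,g_n)-h_{\xi'}(g_n)$ (cf.\ Lemma~\ref{lem:only2}), not through $g_n^{-1}$. Second, and more substantively, your uniqueness argument is circular as written: you form $d\nu_1/d\nu_2$ and argue it is almost invariant, but this presupposes $\nu_1\ll\nu_2$, which is precisely what you are trying to prove. The standard fix is to use the shadow lemma directly: once both $\nu_i$ satisfy $\nu_i(\{\xi':(\xi|\xi')_e\ge n\})\asymp e^{-\fh n}$, the two measures are uniformly comparable on a basis of the topology, hence mutually absolutely continuous with bounded Radon--Nikodym derivative, and no appeal to ergodicity is needed. (Your argument that $\fh_1=\fh_2$ is also cleaner via item~(1): both constants equal the exponential growth rate of $\Gamma$, which is intrinsic.)
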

\begin{proof}
This follows immediately from \cite[Theorem 2.3]{BHM11} and the fact that any non-elementary uniformly quasi-geodesic hyperbolic group is a proper quasi-ruled hyperbolic space by Lemma \ref{lem:quasi-ruled} below.
\end{proof}

The paper \cite{BHM11} contains many results for hyperbolic spaces under the assumption that these spaces are {\em quasi-ruled}. To be precise a metric space $(X,d_X)$ is quasi-ruled if  there are  constants $(\tau,\lambda,c)$ such that $(X,d_X)$ is $(\lambda,c)$-quasi-geodesic and for any $(\lambda,c)$-quasi-geodesic $\gamma: [a,b] \to X$ and any $a \le s \le t \le u \le b$,
$$d_X(\gamma(s),\gamma(t)) + d_X(\gamma(t),\gamma(u)) - d_X(\gamma(s),\gamma(u)) \le 2\tau.$$
\begin{lem}\label{lem:quasi-ruled}
If $(X,d_X)$ is $(1,c)$-quasi-geodesic then it is quasi-ruled.
\end{lem}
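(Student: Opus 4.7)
The plan is to reduce the quasi-ruled inequality to the defining inequalities of a $(1,c)$-quasi-isometric embedding. Observe that the quantity to be bounded,
\[
d_X(\gamma(s),\gamma(t)) + d_X(\gamma(t),\gamma(u)) - d_X(\gamma(s),\gamma(u)),
\]
is twice the Gromov product $(\gamma(s)\,|\,\gamma(u))_{\gamma(t)}$, and a $(1,c)$-quasi-geodesic is by definition almost isometric to its parameter interval with additive error $c$. So one should take the triple of parameters witnessing the quasi-ruled property to be $(\tau,\lambda,c') = (3c/2,\,1,\,c)$: the quasi-geodesic hypothesis itself supplies $(\lambda,c')=(1,c)$.

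For the main estimate, I would fix a $(1,c)$-quasi-geodesic $\gamma\colon [a,b]\to X$ and points $a\le s\le t\le u\le b$, and simply combine three one-line inequalities from the definition:
\begin{align*}
d_X(\gamma(s),\gamma(t)) &\le (t-s) + c,\\
d_X(\gamma(t),\gamma(u)) &\le (u-t) + c,\\
d_X(\gamma(s),\gamma(u)) &\ge (u-s) - c.
\end{align*}
Adding the first two and subtracting the third, the linear terms $(t-s)+(u-t)-(u-s)$ cancel exactly and leave a net additive error of $3c$. Thus the quasi-ruled bound holds with $2\tau = 3c$, i.e.\ $\tau = 3c/2$.

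There really is no obstacle here; the lemma is just a sanity check that the strong hypothesis $\lambda=1$ automatically promotes a quasi-geodesic to a quasi-ruled one. The only thing worth noting is that the same argument fails for general $(\lambda,c)$ with $\lambda>1$, because then the analogue of the cancellation $(t-s)+(u-t)-(u-s)=0$ is replaced by a term of order $(\lambda-\lambda^{-1})(u-s)$, which is unbounded in $u-s$. This is precisely why the hypothesis $\lambda=1$ (as opposed to merely $\lambda$ close to $1$) is essential, and why this lemma is the bridge that lets the authors invoke the results of \cite{BHM11} for uniformly quasi-geodesic hyperbolic groups in Lemma \ref{lem:conformal}.
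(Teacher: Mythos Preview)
Your proof is correct and is essentially identical to the paper's own argument: the paper simply states that for any $(1,c)$-quasi-geodesic $\gamma$ and $a\le s\le t\le u\le b$ one has $d_X(\gamma(s),\gamma(t)) + d_X(\gamma(t),\gamma(u)) - d_X(\gamma(s),\gamma(u)) \le 3c$, and hence sets $\tau = 3c/2$. Your three displayed inequalities are exactly the computation behind that one-line assertion.
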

\begin{proof}
If $\gamma:[a,b] \to X$ is any $(1,c)$-quasi-geodesic then for any $a \le s \le t \le u \le b$,
$$d_X(\gamma(s),\gamma(t)) + d_X(\gamma(t),\gamma(u)) - d_X(\gamma(s),\gamma(u)) \le 3c.$$
So we may set $\tau=3c/2$.
\end{proof}

\section{The stable ratio set of the Gromov boundary}\label{sec:automatic} 

For the rest of the paper, we assume $(\Gamma,d)$ is a non-elementary, uniformly quasi-geodesic, hyperbolic group and $\nu$ is a quasi-conformal measure on $\partial \Gamma$ with quasi-conformal constant $\fh>0$.

Theorem \ref{thm:main} is an immediate consequence of Theorem \ref{thm:AvEP}, Corollary \ref{cor:AvEP} and the next lemma.  
\begin{lem}\label{lem:admissible}
Let $Q>0$ be a constant. Then there exists an admissible family $\{\Upsilon_n\}_{n=1}^\infty$ for $\Gamma \cc (\partial \Gamma, \nu)$ such that if $\Upsilon_n(g,\xi,\xi')>0$ then $|R(g^{-1},\xi)-R(g^{-1},\xi')| \ge Q$, where 
$$R(g,\xi) =  \log \frac{d\nu\circ g}{d\nu}(\xi).$$
\end{lem}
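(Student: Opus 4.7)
I will construct $\Upsilon_n$ as a normalized uniform distribution on a geometrically defined subset of $\Gamma \times \partial \Gamma \times \partial \Gamma$ encoding a branching-tripod configuration at scale $n$. Choose $C_0 := Q/(4\fh) + K_0$, where $K_0 = K_0(\delta, c)$ is large enough to absorb the $O(1)$ errors appearing below, and let $R_0 > 0$ be a fixed constant depending only on $\delta, c$. Declare the support $S_n$ of $\Upsilon_n$ to consist of all triples $(g, \xi, \xi')$ such that $d(e, g) \in [2n, 2n+1)$, $(g|\xi)_e \in [n + C_0 - R_0, n + C_0 + R_0]$, $(\xi|\xi')_e \in [n - C_0 - R_0, n - C_0 + R_0]$, and $(g|\xi')_e \le (\xi|\xi')_e + R_0$ (so $\xi'$ separates from $\xi$ strictly before $g$ does). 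Then set $\Upsilon_n(g, \xi, \xi') := Z_n(\xi)^{-1} 1_{S_n}(g, \xi, \xi')$, with $Z_n(\xi)$ the normalizing constant required by Definition \ref{defn:zeta}(1); measurability is routine since $S_n$ is defined in terms of Borel-measurable Gromov products.

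\textbf{Controlling the $R$-cocycle and its difference.} Quasi-conformality of $\nu$ together with (\ref{eqn:horo}) gives $R(g^{-1}, \xi) = -\fh d(e, g) + 2\fh (g|\xi)_e + O(1)$. On $S_n$ this yields $R(g^{-1}, \xi) = 2\fh C_0 + O(R_0 + 1)$. The Gromov four-point condition applied to $\{e, g, \xi, \xi'\}$, combined with $(g|\xi)_e > (\xi|\xi')_e + \delta$ (which holds since $C_0 \gg R_0$ once $K_0$ is large), forces $(g|\xi')_e = (\xi|\xi')_e + O(\delta)$, hence $R(g^{-1}, \xi') = -2\fh C_0 + O(R_0 + 1)$. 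Thus condition (3) of Definition \ref{defn:zeta} holds with a uniform constant, and $|R(g^{-1}, \xi) - R(g^{-1}, \xi')| = 4\fh C_0 + O(1) \ge Q$ once $K_0$ is chosen large, giving the final clause of the lemma. The fellow-traveling in Definition \ref{defn:zeta}(2) follows because both $(\xi|\xi')_e$ and $(\xi|\xi')_g = (g^{-1}\xi | g^{-1}\xi')_e$ are at least $n - C_0 - O(1)$ (the second by the same four-point argument transported to basepoint $g$), so both $d_B$-distances tend to zero in the visual metric.

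\textbf{Normalization via shadow volumes.} Lemma \ref{lem:conformal}(1) shows that the cardinality of $g \in \Gamma$ with $d(e, g) \in [2n, 2n+1)$ and $(g|\xi)_e \in [n + C_0 - R_0, n + C_0 + R_0]$ is $\asymp e^{\fh(n - C_0)}$, uniformly in $\xi$; via the shadow correspondence at the interior point $a_n(\xi) := \gamma_\xi(n + C_0)$ of a Borel-chosen quasi-geodesic from $e$ to $\xi$, this is the cardinality of a ball of radius $n - C_0 + O(R_0)$ around $a_n(\xi)$. Lemma \ref{lem:conformal}(2) gives $\nu(\{\xi' : (\xi|\xi')_e \in [n - C_0 - R_0, n - C_0 + R_0]\}) \asymp e^{-\fh(n - C_0)}$ uniformly in $\xi$. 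Multiplying, $Z_n(\xi)$ is bounded above and below uniformly in $(n, \xi)$, so $\Upsilon_n \le O(1) \cdot 1_{S_n}$.

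\textbf{Integral bounds and the main obstacle.} The three bounded-marginal estimates in Definition \ref{defn:zeta}(4) reduce, after absorbing the bounded factor $\frac{d\nu \circ g}{d\nu} = e^{R(g, \cdot)} = O(1)$ on the support, to uniform upper bounds on the $\nu$- or counting-mass of inverted slices of $S_n$: for fixed $b'$ bound the mass of $\{(g, \xi) : (g, \xi, b') \in S_n\}$, and similarly for the slices with $gb'$ or $gb$ held fixed. The genuinely subtle step is these inverted counts: one must recognize each such slice (up to bounded terms) as an intersection of a shadow of $b'$ with a sphere or horosphere of $g$, and estimate its mass by Lemma \ref{lem:conformal} together with repeated four-point manipulations relating Gromov products at different basepoints. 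This is precisely where the sphere-and-horosphere intersection estimates and the quasi-identities promised in the introduction are needed. Once Lemma \ref{lem:admissible} is established, Theorem \ref{thm:main} follows from Theorem \ref{thm:AvEP} and Corollary \ref{cor:AvEP}: by construction every $\zeta_n$, hence any weak-$*$ limit $\zeta_\infty$, is supported in $\RR \setminus (-Q, Q)$ and so contains a nonzero $T$, producing $e^T$ in the stable ratio set.
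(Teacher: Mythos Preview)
Your proposal is essentially the paper's own construction: the paper also defines $\Upsilon_n$ as a normalized indicator on a tripod configuration at scale $n$ (parametrized via $d(e,g)$, $h_\xi(g)$, and $(\xi'|g)_e$ rather than your Gromov-product coordinates, and with the roles of $\xi$ and $\xi'$ interchanged so that $g$ fellow-travels $\xi'$ longer than $\xi$), and verifies Definition~\ref{defn:zeta} via exactly the sphere--horosphere count you invoke (Lemma~\ref{lem:vol2}) together with the shadow estimate of Lemma~\ref{lem:conformal}. One technical caveat: your unit-width annulus $d(e,g)\in[2n,2n+1)$ and the window $R_0$ must be taken sufficiently wide for the \emph{lower} bounds in those volume/shadow estimates to hold---the paper introduces a large parameter $\rho$ for precisely this reason (cf.\ the hypotheses $a\ge a_0$, $T_2-T_1\ge T_0$ in Lemma~\ref{lem:vol2} and the annular-shadow argument in Lemma~\ref{lem:Y}).
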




\begin{defn}
Let $\rho>0$ be a constant to be chosen later. For $\xi \in \partial \Gamma$, let $Y_n(\xi)$ be the set of all $g\in \Gamma$ such that $d(g,e) \in (2n-2\rho, 2n)$ and $h_\xi(g) \in (2\rho, 4\rho)$. For $g\in \Gamma$, let $Z_n(g)$ be the set of all $\xi'\in \partial \Gamma$ such that $(\xi' | g)_e  \in (n, n+\rho)$. Define $\Upsilon_n:\Gamma \times \partial \Gamma \times \partial \Gamma \to \RR$ by
$$\Upsilon_n(g,\xi,\xi') := \frac{1_{Y_n(\xi)}(g)}{|Y_n(\xi)|}\frac{1_{Z_n(g)}(\xi')}{\nu(Z_n(g))}.$$
We will show that $\{\Upsilon_n\}$ is admissible (if $\rho$ is sufficiently large).
\end{defn}

\begin{convention}
Given real numbers $x$ and $y$ we write $x \lessdot y$ if there is a constant $C>0$, depending only on $(\Gamma,d)$ and $\nu$, such that $x \le y + C$. We write $x \doteq y$ if both $x \lessdot y$ and $y \lessdot x$. We write $x \lesssim y$ if there is a constant $C>1$ depending only of $(\Gamma,d)$ and $\nu$, such that $x \le Cy$. We write $x \asymp y$ if both $x\lesssim y$ and $y \lesssim x$. 

We also write $x \lessdot_\rho y$ to indicate that the implied constant may depend on $\rho$. Thus $x \lessdot_\rho y$ means there is a constant $C>0$, depending only on $(\Gamma,d)$, $\nu$ and $\rho$ such that $x \le y+ C$. The notations $x \doteq_\rho y, x \lesssim_\rho y, x\asymp_\rho y$ are defined similarly.
\end{convention}

The three lemmas below are proven in the next three sections.
\begin{lem}\label{lem:vol2}
There exist constants $a_0, T_0>0$ such that for every $\xi \in \partial \Gamma$ if $a \ge a_0$, $r\ge \max(|T_1|, |T_2|)-2c$ and $T_2-T_1 \ge T_0$ then
$$ \left|S(e;r-a,r) \cap h_\xi^{-1}([T_1,T_2]) \right| \asymp e^{\fh (r+T_2)/2} $$
where $S(e;r-a,r) = \{g \in \Gamma:~ r-a < d(e,g)\le r\}$.
\end{lem}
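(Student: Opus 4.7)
The plan is to convert the horofunction condition on $g$ into a Gromov-product condition on the direction from $e$ to $g$, and then count lattice points by transferring the count to $\nu$-measures on $\partial\Gamma$ via the shadow lemma together with Lemma \ref{lem:conformal}. Throughout, write $\ell:=d(e,g)$. The standard quasi-identity $h_\xi(g)\doteq \ell-2(\xi|g)_e$, valid in any $\delta$-hyperbolic space, turns the condition $h_\xi(g)\in[T_1,T_2]$ into $(\xi|g)_e \in [(\ell-T_2)/2,\,(\ell-T_1)/2]$, up to a universal additive error. The hypothesis $r\ge \max(|T_1|,|T_2|)-2c$ is exactly what is needed to keep these intervals in the regime where the shadow/Gromov-product quasi-identities are tight; the hypothesis $T_2-T_1\ge T_0$ will ensure the corresponding boundary annuli are non-degenerate; and $a\ge a_0$ guarantees that the radial annulus is thick enough to contain a vertex of any quasi-geodesic through a given boundary direction.

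For the upper bound, for each counted $g$ let $O(g)=\{\eta\in\partial\Gamma:(\eta|g)_e>\ell-C\}$ be the shadow of $g$. Quasi-conformality of $\nu$ combined with Lemma \ref{lem:conformal}(1) gives the shadow estimate $\nu(O(g))\asymp e^{-\fh \ell}\asymp_{a} e^{-\fh r}$. The hyperbolicity inequality \eqref{eqn:gromov} forces $(\xi|\eta)_e \doteq (\xi|g)_e$ for $\eta\in O(g)$, so all shadows of counted $g$'s lie in a single annular set
\[
U:=\bigl\{\eta\in\partial\Gamma:(\xi|\eta)_e\in\bigl[(r-T_2)/2-C',\,(r-T_1)/2+C'\bigr]\bigr\}.
\]
Shadows at a fixed radius have bounded multiplicity on $\partial\Gamma$, so summing this multiplicity across the $O(a)=O(1)$ radii in $(r-a,r]$ yields $\sum_g \nu(O(g))\lesssim \nu(U)$. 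By Lemma \ref{lem:conformal}(2), the tail estimate gives $\nu(U)\lesssim e^{-\fh(r-T_2)/2}$. Combining,
\[
\bigl|S(e;r-a,r)\cap h_\xi^{-1}([T_1,T_2])\bigr|\cdot e^{-\fh r} \;\lesssim\; e^{-\fh(r-T_2)/2},
\]
which is the desired upper bound $\lesssim e^{\fh(r+T_2)/2}$.

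For the lower bound, work with the sub-annulus $V:=\{\eta:(\xi|\eta)_e\in[(r-T_2)/2+C'',\,(r-T_1)/2-C'']\}$; choosing $T_0$ large enough, the hypothesis $T_2-T_1\ge T_0$ ensures $V$ has comparable measure to $U$, namely $\nu(V)\asymp e^{-\fh(r-T_2)/2}$ by Lemma \ref{lem:conformal}(2). For each $\eta\in V$, pick a $(1,c)$-quasi-geodesic from $e$ to $\eta$ and (using $a\ge a_0$) choose a vertex $g(\eta)$ on it with $d(e,g(\eta))\in(r-a,r]$. Since $(\xi|\eta)_e\le (r-T_1)/2-C''$ is bounded away from $d(e,g(\eta))$ thanks to $r\ge\max(|T_1|,|T_2|)-2c$, the standard quasi-identity $(\xi|g(\eta))_e \doteq \min((\xi|\eta)_e,d(e,g(\eta)))$ gives $(\xi|g(\eta))_e\doteq(\xi|\eta)_e$, hence $h_\xi(g(\eta))\in[T_1,T_2]$ after re-selecting $C''$. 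The map $\eta\mapsto g(\eta)$ has each fibre contained in $O(g(\eta))$, of measure $\asymp_a e^{-\fh r}$, so
\[
\bigl|S(e;r-a,r)\cap h_\xi^{-1}([T_1,T_2])\bigr|\cdot e^{-\fh r} \;\gtrsim\; \nu(V)\;\asymp\; e^{-\fh(r-T_2)/2},
\]
giving the matching lower bound.

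The main obstacle is the accurate execution of the shadow lemma and the quasi-identities for Gromov products and horofunctions in the uniformly quasi-geodesic (rather than geodesic) setting, particularly the bounded-multiplicity statement used in the upper bound and the existence of a vertex on a quasi-geodesic at a specified radius in the lower bound. Both rely on the quasi-ruled structure supplied by Lemma \ref{lem:quasi-ruled} and the shadow estimates from \cite{BHM11}. The precise numerical thresholds $a_0$, $T_0$ emerge from these error constants and from the requirement that the annular subset $V$ actually be nonempty and comparable in measure to $U$.
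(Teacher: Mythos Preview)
Your approach is correct in spirit but takes a genuinely different route from the paper. The paper never touches shadows or boundary annuli: it shows by an elementary two--ball argument that $B(e,r)\cap h_\xi^{-1}(-\infty,T]$ is, up to bounded error, a \emph{ball} of radius $(r+T)/2$ (Corollary~\ref{cor:ellipse}), reads off its cardinality from the ball--growth estimate of Lemma~\ref{lem:conformal}(1), and then obtains both the horoball--slab count (Lemma~\ref{lem:vol1}) and the annular count by subtraction. This is shorter and yields implicit constants that are uniform in $a\ge a_0$, as the statement requires. Your Patterson--Sullivan/shadow argument is a legitimate alternative and has the advantage of being the ``standard'' technology for such counts, but as written it loses the $a$--uniformity: the bounded--multiplicity step sums over $O(a)$ unit shells, and the annulus $U$ must be widened by $a/2$ to accommodate all $\ell\in(r-a,r]$, so your upper bound carries an $a\,e^{\fh a/2}$ factor. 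This is easily repaired by using $S(e;r-a,r)\subset B(e,r)$ and running the shadow count only at the single scale $r$.

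There is also a genuine slip in the lower bound. You assert that $(\xi|\eta)_e\le (r-T_1)/2-C''$ is bounded away from $d(e,g(\eta))>r-a$ ``thanks to $r\ge\max(|T_1|,|T_2|)-2c$'', but that hypothesis only gives $(r-T_1)/2\le r+c$, which need not be below $r-a$. In the extremal regime $T_1\approx -r$ your quasi-identity $(\xi|g(\eta))_e\doteq(\xi|\eta)_e$ can fail and the min lands on $\ell$ instead. The fix is to pick $g(\eta)$ with $d(e,g(\eta))$ in a \emph{fixed} window $(r-c',r]$ (possible once $a\ge a_0\ge c'$), and to shrink $V$ to a sub-annulus of fixed width near $(r-T_2)/2$ rather than all the way up to $(r-T_1)/2$; Lemma~\ref{lem:conformal}(2) still gives $\nu(V)\asymp e^{-\fh(r-T_2)/2}$ provided $T_0$ is chosen large enough. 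With these adjustments your argument goes through, but the paper's ``horoball $\cap$ ball $=$ ball'' trick gets there with less bookkeeping.
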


\begin{lem}\label{lem:Z}
If $\Upsilon_n(g,\xi,\xi')>0$ and $\rho$ is sufficiently large (depending on $(\Gamma,d)$ and $\nu$) then
$$h_{g^{-1}\xi'}(g^{-1}) \gtrdot_\rho 0, \quad h_{g^{-1}\xi}(g^{-1}) \gtrdot_\rho 0, \quad h_{\xi'}(g) \in (-5\rho, \rho), \quad (\xi|\xi')_e \ge n - 5\rho, \quad (g^{-1}\xi|g^{-1}\xi')_e \ge n-10\rho.$$
\end{lem}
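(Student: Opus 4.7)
The plan is to reduce all five assertions to arithmetic on the three prescribed ranges $d(e,g) \in (2n-2\rho, 2n)$, $h_\xi(g) \in (2\rho, 4\rho)$, $(\xi'|g)_e \in (n, n+\rho)$, by first establishing three standard ``quasi-identities'' that hold up to a universal additive constant depending only on $\delta$ and the quasi-geodesic constants. These are: (a) $(\eta|g)_e \doteq \tfrac{1}{2}(d(e,g) - h_\eta(g))$ for $\eta \in \partial \Gamma$, $g \in \Gamma$; (b) the horofunction cocycle $h_{g\eta}(z) \doteq h_\eta(g^{-1}z) - h_\eta(g^{-1})$, which specializes (via $g \mapsto g^{-1}$, $z = g^{-1}$, and $h_\eta(e) = 0$) to $h_{g^{-1}\eta}(g^{-1}) \doteq -h_\eta(g)$; and (c) the change-of-basepoint identity $(g^{-1}\eta|g^{-1}\eta')_e = (\eta|\eta')_g \doteq (\eta|\eta')_e + \tfrac{1}{2}\bigl(h_\eta(g) + h_{\eta'}(g)\bigr)$. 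Each of these follows by picking sequences $y_i \to \eta$ (and $z_i \to \eta'$) in $\Gamma$ and applying (\ref{eqn:horo}) and (\ref{eqn:gromovproduct}) termwise to the relevant definitions; the single substitution $d(g,y_i) \doteq h_\eta(g) + d(y_i,e)$ is the mechanism driving all three.

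With (a)--(c) in hand, I would prove the five bounds in the following order. First, applying (a) with $\eta = \xi'$ gives $h_{\xi'}(g) \doteq d(e,g) - 2(\xi'|g)_e$, which lies in $(-4\rho, 0)$ up to the universal constant, so for $\rho$ large $h_{\xi'}(g) \in (-5\rho, \rho)$. Second and third, the bounds $h_{g^{-1}\xi}(g^{-1}), h_{g^{-1}\xi'}(g^{-1}) \gtrdot_\rho 0$ follow from (b): $h_{g^{-1}\xi}(g^{-1}) \doteq -h_\xi(g) > -4\rho$ and $h_{g^{-1}\xi'}(g^{-1}) \doteq -h_{\xi'}(g) > -\rho$, each a $\rho$-dependent constant. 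Fourth, to prove $(\xi|\xi')_e \ge n - 5\rho$, apply (a) with $\eta = \xi$ to place $(\xi|g)_e$ in $(n - 3\rho, n - \rho)$ up to a constant, combine with $(\xi'|g)_e > n$, and invoke the $\delta$-hyperbolicity inequality $(\xi|\xi')_e \gtrdot \min\bigl((\xi|g)_e, (\xi'|g)_e\bigr)$ (extended to the boundary via Proposition~5.12 of \cite{Va05}). Fifth, to prove $(g^{-1}\xi|g^{-1}\xi')_e \ge n - 10\rho$, apply (c) with $\eta = \xi$, $\eta' = \xi'$ together with the bounds already established:
\[(g^{-1}\xi|g^{-1}\xi')_e \doteq (\xi|\xi')_e + \tfrac{1}{2}\bigl(h_\xi(g) + h_{\xi'}(g)\bigr) \gtrdot (n - 5\rho) + \tfrac{1}{2}(2\rho - 5\rho) = n - \tfrac{13\rho}{2},\]
which exceeds $n - 10\rho$ once $\rho$ is sufficiently large.

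The main obstacle is nothing more than bookkeeping: each of (a)--(c) and each invocation of $\delta$-hyperbolicity introduces its own universal additive constant, and one must check that the asymmetric gap built into each asserted inequality (a gap of $\rho$ between the natural bound $-4\rho$ and the stated $-5\rho$ in the first bullet, and a gap of $7\rho/2$ between $13\rho/2$ and $10\rho$ in the last) is wide enough to absorb them all. This is precisely the content of the hypothesis ``$\rho$ sufficiently large depending on $(\Gamma,d)$ and $\nu$.'' No new geometric idea is required beyond the standard Gromov-hyperbolic toolkit.
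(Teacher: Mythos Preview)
Your proposal is correct and essentially identical to the paper's proof. Your quasi-identities (a), (b), (c) are precisely the content of the paper's Lemma~\ref{lem:only2} (together with the one-line sequence computation $h_{g^{-1}\xi'}(g^{-1}) \doteq -h_{\xi'}(g)$ that the paper carries out inside the proof of Lemma~\ref{lem:Z}), and you derive the five assertions in the same order, with the same arithmetic, and with the same appeal to the extended $\delta$-inequality (\ref{eqn:gromov}).
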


\begin{lem}\label{lem:Y}
If $\rho$ is sufficiently large (dependent only on $(\Gamma,d)$ and $\nu$) then for any $g\in \Gamma$ with $d(e,g) \in (2n-2\rho,2n)$, $\nu(Z_n(g)) \asymp_\rho e^{-\fh n}$.
\end{lem}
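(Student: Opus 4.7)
The plan is to reduce the estimate for $\nu(Z_n(g))$ to the shadow-type bound Lemma \ref{lem:conformal}(2), by replacing the mixed Gromov product $(\xi'|g)_e$ (which involves the interior point $g$) with the boundary product $(\xi'|\xi_g)_e$ for a suitable $\xi_g \in \partial \Gamma$ lying ``past $g$'' as seen from $e$.

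First I would construct $\xi_g \in \partial \Gamma$ with $(\xi_g|g)_e \doteq d(e,g)$. Since $(\Gamma,d)$ is non-elementary and uniformly quasi-geodesic, one can extend a $(1,c)$-quasi-geodesic segment from $e$ through $g$ to a $(1,c')$-quasi-geodesic ray terminating at some $\xi_g \in \partial \Gamma$; the standard estimate for the Gromov product of an interior point of a quasi-geodesic ray with its endpoint at infinity then yields $(\xi_g|g)_e \doteq d(e,g)$, so $(g|\xi_g)_e \in (2n-2\rho, 2n) + O(1)$.

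Next I would apply the $\delta$-hyperbolicity inequality (\ref{eqn:gromov}), extended to the Gromov boundary via Proposition 5.12 of \cite{Va05} together with (\ref{eqn:gromovproduct}) (using an approximating sequence to allow $\xi_g\in\partial\Gamma$ in the ``$z$''-slot), in both directions: for any $\xi' \in \partial \Gamma$,
\begin{align*}
(\xi'|g)_e &\gtrdot \min\{(\xi'|\xi_g)_e,\; (g|\xi_g)_e\}, \\
(\xi'|\xi_g)_e &\gtrdot \min\{(\xi'|g)_e,\; (g|\xi_g)_e\}.
\end{align*}
Because $(g|\xi_g)_e \doteq 2n$, once $n$ exceeds a threshold $n_\rho = O(\rho)$, both minima are achieved by their first argument whenever that argument is $\leq n+\rho+O(1)$. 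Consequently $|(\xi'|g)_e - (\xi'|\xi_g)_e| \leq C_0$ for some $C_0$ depending only on $(\Gamma,d)$, which gives the sandwich
$$\{\xi' : (\xi'|\xi_g)_e \in (n+C_0,\, n+\rho-C_0)\} \;\subseteq\; Z_n(g) \;\subseteq\; \{\xi' : (\xi'|\xi_g)_e \in (n-C_0,\, n+\rho+C_0)\}.$$
Applying Lemma \ref{lem:conformal}(2) to the outer set gives $\nu(Z_n(g)) \lesssim_\rho e^{-\fh n}$ immediately. For the lower bound, expressing the inner set as a difference of two shadows and invoking Lemma \ref{lem:conformal}(2) again,
$$\nu(Z_n(g)) \;\geq\; \nu(\{(\xi'|\xi_g)_e \geq n+C_0\}) - \nu(\{(\xi'|\xi_g)_e \geq n+\rho-C_0\}) \;\geq\; C^{-1} e^{-\fh(n+C_0)} - C e^{-\fh(n+\rho-C_0)},$$
which is $\gtrsim_\rho e^{-\fh n}$ once $\rho$ is chosen large enough that the subtracted term is at most half the first. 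The finitely many $n \leq n_\rho$ can be handled case-by-case (each gives a strictly positive measure by a similar construction of a witness $\xi'$) and absorbed into the $\rho$-dependent implicit constants.

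The main obstacle is the careful bookkeeping of additive constants through the boundary extension of the hyperbolicity inequality, and the clean construction of $\xi_g$ in a merely uniformly quasi-geodesic (rather than geodesic) hyperbolic group. Given the quasi-ruled structure from Lemma \ref{lem:quasi-ruled} and the boundary comparison (\ref{eqn:gromovproduct}), both reduce to routine tracking of constants of the sort already used throughout Section \ref{sec:Gromov}.
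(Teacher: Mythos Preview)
Your argument is correct and follows the same high-level plan as the paper: reduce to the shadow estimate of Lemma~\ref{lem:conformal}(2) via the hyperbolicity inequality~(\ref{eqn:gromov}). The difference lies in the choice of boundary reference point. You use a single $\xi_g$ with $(\xi_g|g)_e \doteq d(e,g)$ and derive a two-sided sandwich
\[
\{\xi':(\xi'|\xi_g)_e \in (n+C_0,\,n+\rho-C_0)\}\ \subset\ Z_n(g)\ \subset\ \{\xi':(\xi'|\xi_g)_e \in (n-C_0,\,n+\rho+C_0)\}.
\]
The paper handles the two bounds with two different points: for the upper bound it simply fixes any $\xi\in Z_n(g)$ and gets $Z_n(g)\subset\{\xi':(\xi|\xi')_e\ge n-\rho\}$ in one line; for the lower bound it invokes Lemma~\ref{lem:exist1} to produce $\xi'$ with $(\xi'|g)_e\in[n+\rho/3,\,n+\rho/2]$ and shows $\{\xi'':(\xi'|\xi'')_e\ge n+\rho\}\subset Z_n(g)$, so no difference of shadows is needed. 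Your construction of $\xi_g$ by extending a quasi-geodesic is fine, but note the paper already supplies it: equation~(\ref{eqn:g}) in the proof of Lemma~\ref{lem:exist1} gives $(g\xi'|g)_e\doteq d(e,g)$ for $\xi'$ furnished by Lemma~\ref{lem:exist0}, so $\xi_g:=g\xi'$ would do and avoids the ray-extension discussion. Both proofs implicitly need $n$ large relative to $\rho$ (yours so that $(\xi_g|g)_e$ dominates in the minima, the paper's so that Lemma~\ref{lem:exist1} applies with $r\approx n+\rho/2$); this is harmless since only $n\to\infty$ matters for Lemma~\ref{lem:admissible}.
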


\begin{proof}[Proof of Lemma \ref{lem:admissible} given Lemmas \ref{lem:vol2}-\ref{lem:Y}]
We assume $\rho>0$ is large enough so that the conclusions to the lemmas above hold. We check the conditions of Definition \ref{defn:zeta} one at a time. The first requirement is immediate:
$$\sum_{g\in \Gamma} \int \Upsilon_n(g,\xi,\xi') ~d\nu(\xi') = \sum_{g\in \Gamma} \int\frac{1_{Y_n(\xi)}(g)}{|Y_n(\xi)|}\frac{1_{Z_n(g)}(\xi')}{\nu(Z_n(g))} ~d\nu(\xi')=1.$$
 By Lemma \ref{lem:Z} if $(g,\xi,\xi')$ is such that $\Upsilon_n(g,\xi,\xi')>0$ then $(\xi|\xi')_e \ge n-5\rho$ and $(g^{-1}\xi|g^{-1}\xi')_e \ge n-10\rho$. This proves the second requirement (with respect to any visual metric). The  definition of $Y_n$ and Lemma \ref{lem:Z} imply that if $(g,\xi,\xi') \in S_n$ (the support of $\Upsilon_n$) then $|h_\xi(g)| + |h_{\xi'}(g)| \le 10\rho$.  By the definition of quasi-conformality, $R(g^{-1},\xi) \doteq -\fh h_\xi(g)$. So this proves  the third requirement.

To prove the fourth requirement, fix $n\ge 0$ and $\xi' \in \partial \Gamma$. By Lemma \ref{lem:Y},
\begin{eqnarray*}
 \int \sum_{g\in \Gamma}  \Upsilon_{n}(g,\xi,\xi')~d\nu(\xi) &=& \int  \frac{1}{|Y_n(\xi)|} \sum_{g\in Y_n(\xi)} \frac{1_{Z_n(g)}(\xi')}{\nu(Z_n(g))} ~d\nu(\xi)\\
 &\lesssim_\rho & e^{\fh n}  \int  \frac{1}{|Y_n(\xi)|} \sum_{g\in Y_n(\xi)} 1_{Z_n(g)}(\xi')  ~d\nu(\xi)\\
 &=&  e^{\fh n}  \int  \frac{|\{g\in Y_n(\xi):~\xi' \in Z_n(g)\}|}{|Y_n(\xi)|} ~d\nu(\xi)\\
 &\le&  e^{\fh n}  \nu(\{\xi:~ \exists g\in Y_n(\xi), \xi' \in Z_n(g)\})\\
  &\le& e^{\fh n}  \nu(\{\xi:~ (\xi|\xi')_e \ge n-5\rho\}) \lesssim_\rho 1. 
  \end{eqnarray*} 
The last two inequalities above follow from Lemmas \ref{lem:Z} and \ref{lem:conformal}. This proves the first inequality of the fourth requirement of Definition \ref{defn:zeta}.  

To prove the second inequality, note that Lemma \ref{lem:Z} implies that if $\Upsilon_{n}(g,\xi,g\xi')>0$ then $h_{\xi'}(g^{-1}) \gtrdot_\rho 0$. So the quasi-conformality of $\nu$ implies
 $$\frac{d\nu \circ g}{d\nu}(\xi') \asymp e^{-\fh h_{\xi'}(g^{-1})} \lesssim_\rho 1.$$
Lemma \ref{lem:vol2} implies $|Y_n(\xi)| \asymp_\rho e^{\fh n}$ and Lemma \ref{lem:Y} implies $\nu(Z_n(g)) \asymp_\rho e^{-\fh n}$. By Lemma \ref{lem:Z}, 
  \begin{eqnarray*}
 \int \sum_{g\in \Gamma}  \Upsilon_{n}(g,\xi,g\xi')\frac{d\nu \circ g}{d\nu}(\xi')~d\nu(\xi)   &\lesssim_\rho& \int \sum_{g\in \Gamma}  \Upsilon_{n}(g,\xi,g\xi')~d\nu(\xi)\\
  &=& \int  \frac{1}{|Y_n(\xi)|} \sum_{g\in Y_n(\xi)} \frac{1_{Z_n(g)}(g\xi')}{\nu(Z_n(g))} ~d\nu(\xi)\\
 &\lesssim_\rho &  \int   \sum_{g\in Y_n(\xi)} 1_{Z_n(g)}(g\xi')  ~d\nu(\xi)\\
 &=&  \sum_{g\in \Gamma} \nu(\{\xi |~ g\in Y_n(\xi), ~ g\xi' \in Z_n(g)\})\lesssim_\rho 1.
 \end{eqnarray*} 
To see the last inequality above, note that if $g \in Y_n(\xi)$ and $g\xi' \in Z_n(g)$ then Lemma \ref{lem:Z} implies $(\xi|g\xi')_e \gtrdot_\rho n$. So Lemma \ref{lem:conformal} implies $\nu(\{\xi |~ g\in Y_n(\xi), ~ g\xi' \in Z_n(g)\}) \lesssim_\rho e^{-\fh n}$. By Lemma \ref{lem:Z}, the number of nonzero terms in the sum is at most 
$$|\{g\in \Gamma:~ d(g,e) \in (2n-2\rho,2n), h_{g\xi'}(g) \in (-5\rho,\rho) \}|.$$
By (\ref{eqn:horo}) if $\{x_n\}$ is a sequence in $\Gamma$ converging to $\xi'$ then
\begin{eqnarray}\label{equation}
h_{g\xi'}(g) \doteq \liminf_{n\to\infty} d(gx_n,g)-d(gx_n,e) = \liminf_{n\to\infty} d(x_n,e)-d(x_n,g^{-1}) \doteq - h_{\xi'}(g^{-1}).
\end{eqnarray}
Therefore, the number of nonzero terms is at most
 $$|\{g\in \Gamma:~ d(g,e) \in (2n-2\rho,2n), h_{\xi'}(g^{-1}) \in (-6\rho,6\rho) \}|$$
 when $\rho$ is sufficiently large.  Lemma \ref{lem:vol2} implies that this is $\lesssim_\rho e^{\fh n}$. This proves the second inequality (of the 4th requirement of definition \ref{defn:zeta}). 

For the last inequality note, we use the quasi-conformality of $\nu$ to say that if $\Upsilon_{n}(g,g\xi,\xi')>0$ then Lemma \ref{lem:Z} implies
 $$\frac{d\nu \circ g}{d\nu}(\xi) \asymp e^{-\fh h_{\xi}(g^{-1})} \lesssim_\rho 1.$$
By  Lemma \ref{lem:vol2} $|Y_n(g\xi)| \lesssim_\rho e^{\fh n}$. So,
   \begin{eqnarray*}
 \int \sum_{g\in \Gamma}  \Upsilon_{n}(g,g\xi,\xi')\frac{d\nu \circ g}{d\nu}(\xi)~d\nu(\xi') &\lesssim_\rho& \int \sum_{g\in \Gamma}  \Upsilon_{n}(g,g\xi,\xi')~d\nu(\xi')\\
&=& \int \sum_{g\in \Gamma} \frac{1_{Y_n(g\xi)}(g) 1_{Z_n(g)}(\xi')}{|Y_n(g\xi)|\nu(Z_n(g))} ~d\nu(\xi')\\
&=&  \sum_{g\in \Gamma} \frac{1_{Y_n(g\xi)}(g)}{|Y_n(g\xi)|} \\
&\lesssim_\rho& e^{-\fh n} |\{g\in \Gamma:~ g\in Y_n(g\xi)\}|  \lesssim_\rho 1.
   \end{eqnarray*} 
 To see the last inequality above, note that if $g \in Y_n(g\xi)$ then $d(e,g) \in (2n-2\rho,2n)$ and $h_{g\xi}(g) \in (2\rho,4\rho)$. Equation (\ref{equation}) shows $h_{g\xi}(g) \doteq -h_\xi(g^{-1})$. So if $\rho$ is sufficiently large then Lemma \ref{lem:vol2} implies
 $$|\{g\in \Gamma:~ g\in Y_n(g\xi)\}| \le |\{g\in \Gamma:~d(e,g) \in (2n-2\rho,2n), ~h_\xi(g^{-1}) \in (-6\rho,6\rho)\}| \lesssim_\rho e^{\fh n}.$$
 This proves the last inequality. So $\{\Upsilon_n\}_{n=1}^\infty$ is an admissible family. 

Finally observe that by Lemma \ref{lem:Z}, if $(g,\xi,\xi') \in S_n$ then $|h_\xi(g)-h_{\xi'}(g)| \ge \rho$. By definition of quasi-conformality, $R(g^{-1},\xi) \doteq -\fh h_\xi(g)$. So by choosing $\rho$ sufficiently large, we can require that $|R(g^{-1},\xi) - R(g^{-1},\xi')| \ge Q$ for all $(g,\xi,\xi') \in S_n$ as required.

\end{proof}

\section{Proof of Lemma \ref{lem:vol2} }\label{sec:volume} 


Throughout this section, we assume $(\Gamma,d)$ is a proper $(1,c)$-quasi-geodesic $\delta$-hyperbolic group. For $g \in \Gamma$ and $r>0$, let $B(g,r)$ denote the closed ball of radius $r$ centered at $g$. 

\begin{lem}
Let $p,q \in \Gamma$ and $r_p,r_q>0$ be such that $|r_p-r_q| \le d(p,q)-2c$. Then there exists an element $x \in \Gamma$ on a $(1,c)$-quasi-geodesic between $p$ and $q$ such that 
$$B\Big(x, \frac{r_p+r_q -d(p,q)}{2}-2c\Big) \subset B(p,r_p) \cap B(q,r_q) \subset B\Big(x, \frac{r_p+r_q -d(p,q)}{2} + 6c+2\delta\Big).$$
\end{lem}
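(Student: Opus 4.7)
Set $D := d(p,q)$, $s := (r_p+r_q-D)/2$, $t_p := r_p - s = (D+r_p-r_q)/2$, and $t_q := r_q - s = (D+r_q-r_p)/2$, so $t_p + t_q = D$ and the hypothesis $|r_p - r_q| \le D - 2c$ gives $\min(t_p, t_q) = (D - |r_p-r_q|)/2 \ge c$. In a tree one would have $B(p,r_p) \cap B(q,r_q) = B(x,s)$ exactly for the point $x$ on $[p,q]$ with $d(p,x) = t_p$, so the plan is to take $x$ as a quasi-geodesic analogue of this point. Fix any $(1,c)$-quasi-geodesic $\gamma : [0,L] \to \Gamma$ from $p$ to $q$. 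From the quasi-isometry inequality at the endpoints, $|L - D| \le c$, hence $L \ge D - c \ge t_p$, so $x := \gamma(t_p) \in \Gamma$ is well-defined and lies on $\gamma$. The same inequality gives $|d(p,x) - t_p| \le c$ and $|d(q,x) - (L - t_p)| \le c$, hence $|d(q,x) - t_q| \le 2c$.

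The inner inclusion is routine: for $y$ with $d(y,x) \le s-2c$, the triangle inequality yields $d(y,p) \le (s-2c) + (t_p + c) = r_p - c$ and $d(y,q) \le (s-2c) + (t_q + 2c) = r_q$. The outer inclusion is where hyperbolicity enters. For $y \in B(p,r_p) \cap B(q,r_q)$ I would invoke the Gromov four-point condition (\ref{eqn:gromov}) at basepoint $p$, namely $(x|y)_p \ge \min\{(x|q)_p, (q|y)_p\} - \delta$, and then expand $d(x,y) = d(p,x) + d(p,y) - 2(x|y)_p$. To control $(x|q)_p$ I would apply Lemma \ref{lem:quasi-ruled} (with $\tau = 3c/2$), which gives $d(p,x) + d(x,q) - D \le 3c$ and hence $(x|q)_p = d(p,x) - (d(p,x)+d(x,q)-D)/2 \ge d(p,x) - 3c/2$. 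Splitting on which term achieves the minimum, I expect $d(x,y)$ to be bounded either by $d(p,y) - d(p,x) + 3c + 2\delta \le r_p - (t_p - c) + 3c + 2\delta = s + 4c + 2\delta$, or by $d(p,x) - D + d(q,y) + 2\delta \le (t_p + c) - D + r_q + 2\delta = s + c + 2\delta$ (using $r_p + r_q - D = 2s$ in the last step). Either way $d(x,y) \le s + 6c + 2\delta$.

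The main obstacle is the outer inclusion, since it is the only step that actually uses hyperbolicity and must be compatible with the quasi-geodesic (not geodesic) nature of $\Gamma$. The slack of $2c$ between my computed bound $s + 4c + 2\delta$ and the claimed $s + 6c + 2\delta$ is exactly what absorbs the discrepancies coming from Lemma \ref{lem:quasi-ruled} and from discretizing the quasi-geodesic parameter at $t_p$.
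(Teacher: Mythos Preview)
Your proof is correct and follows essentially the same approach as the paper: both choose $x = \gamma(t_p)$ on a $(1,c)$-quasi-geodesic, obtain the inner inclusion from the triangle inequality, and obtain the outer inclusion from a single application of the four-point condition (\ref{eqn:gromov}). The only technical difference is the choice of basepoint in that application---the paper takes $w=x$ and uses $(p|q)_x \le 2c$ to force one of $(p|y)_x,(q|y)_x$ to be small, whereas you take $w=p$; your version actually yields the slightly sharper bound $s+4c+2\delta$, comfortably within the stated $s+6c+2\delta$.
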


\begin{proof}
Let $\gamma:[a,b] \to \Gamma$ be a $(1,c)$-quasi-geodesic from $p$ to $q$ for some interval $[a,b] \subset \RR$. Let $t=a + \frac{d(p,q)+r_p-r_q}{2}$. Since $\gamma$ is a $(1,c)$-quasi-geodesic, $|(b-a)-d(p,q)| \le c$. So $t \in [a,b]$. Let $x = \gamma(t)$. Note
$$\Big|d(x,p) - \frac{d(p,q)+r_p-r_q}{2}\Big| \le 2c \textrm{ and } \Big|d(x,q) - \frac{d(p,q)+r_q-r_p}{2}\Big| \le 2c.$$
The first inclusion now follows from the triangle inequality. 

 Let $y \in B(p,r_p) \cap B(q,r_q)$. By definition of $\delta$-hyperbolicity,
 $$2c \ge (p|q)_x \ge \min\{ (p|y)_x, (q|y)_x\} - \delta.$$
 Suppose that $\min\{ (p|y)_x, (q|y)_x\}  = (p|y)_x \le 2c+\delta$. Then
 \begin{eqnarray*}
 d(y,x) &=& 2(p|y)_x - d(p,x) +d(p,y) \le 4c+2\delta - \frac{d(p,q)+r_p-r_q}{2} + 2c + r_p\\
  &=& 6c+2\delta + \frac{r_p+r_q -d(p,q)}{2}.
  \end{eqnarray*}
By symmetry the above inequality holds if $\min\{ (p|y)_x, (q|y)_x\}  = (q|y)_x$. This proves the second inequality.
 \end{proof}

\begin{cor}\label{cor:ellipse}
 Let $\xi\in \partial \Gamma$. Let $0<r$ and $T \le r-2c$. Then there exists an element $x \in \Gamma$ such that
\begin{eqnarray*}
B\Big(x, \frac{r+T}{2}-2c\Big) &\subset& B(e,r) \cap h^{-1}_\xi(-\infty,T+4\delta]\\
 B(p,r) \cap h^{-1}_\xi(-\infty,T-4\delta]  &\subset& B\Big(x, \frac{r+T}{2} + 6c+2\delta\Big).
 \end{eqnarray*}
\end{cor}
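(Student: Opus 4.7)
The idea is to realize the horoball $h^{-1}_\xi(-\infty, T]$ as a limit of ordinary balls $B(y_i, r_i)$ for a sequence $\{y_i\}\subset \Gamma$ converging to $\xi$, and then apply the previous lemma to the pair of balls $B(e,r)$ and $B(y_i,r_i)$.

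First I would fix any sequence $\{y_i\}\subset \Gamma$ with $y_i \to \xi$, so $d(e,y_i)\to\infty$, and set $r_i := T + d(e,y_i)$. The assumption $T \le r-2c$ gives, for all $i$ large enough,
$$|r-r_i| \;=\; d(e,y_i) + T - r \;\le\; d(e,y_i) - 2c,$$
which is precisely the hypothesis of the previous lemma applied to $p=e,\, q=y_i,\, r_p=r,\, r_q=r_i$. Thus there exists $x_i \in \Gamma$ on a $(1,c)$-quasi-geodesic from $e$ to $y_i$ with
$$B\Bigl(x_i,\tfrac{r+T}{2}-2c\Bigr) \;\subset\; B(e,r)\cap B(y_i,r_i) \;\subset\; B\Bigl(x_i,\tfrac{r+T}{2}+6c+2\delta\Bigr),$$
using the identity $\frac{r+r_i-d(e,y_i)}{2} = \frac{r+T}{2}$.

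Next, since $x_i$ lies on a quasi-geodesic from $e$ to $y_i$, its distance from $e$ is within $2c$ of $\frac{d(e,y_i)+r-r_i}{2} = \frac{r-T}{2}$, a constant. Properness of $(\Gamma,d)$ lets me pass to a subsequence along which $x_i$ is constantly equal to some $x \in \Gamma$. This $x$ will be the element claimed by the corollary.

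Finally, I convert between the metric-ball condition and the horoball condition using inequality (\ref{eqn:horo}). For the first inclusion: if $z \in B(x,\tfrac{r+T}{2}-2c)$, then $z\in B(y_i,r_i)$ for all $i$ in our subsequence, so $d(z,y_i)-d(e,y_i)\le T$; taking $\liminf$ and applying (\ref{eqn:horo}) gives $h_\xi(z) \le T+4\delta$, and $z\in B(e,r)$ by construction. For the second inclusion: if $z$ lies in $B(e,r)\cap h^{-1}_\xi(-\infty,T-4\delta]$ (reading the ``$B(p,r)$'' in the statement as $B(e,r)$), then (\ref{eqn:horo}) yields $d(z,y_i)-d(e,y_i) \le h_\xi(z)+4\delta \le T$ for all large $i$, so $z\in B(y_i,r_i)$, hence $z\in B(x,\tfrac{r+T}{2}+6c+2\delta)$ by the previous lemma.

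The only subtle point is the passage to a constant subsequence, which relies on the distance bound on $x_i$ obtained from its position on the quasi-geodesic; the $\pm 4\delta$ slack in the horoballs in the statement is exactly what absorbs the error in (\ref{eqn:horo}) when converting between the finite-point and boundary-point formulations, so no further sharp estimate is needed.
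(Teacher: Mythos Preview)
Your proof is correct and follows essentially the same approach as the paper: approximate the horoball by balls $B(y_i, d(e,y_i)+T)$, apply the previous lemma, extract a limit point of the resulting centers $x_i$ using properness, and pass to the limit via (\ref{eqn:horo}). Your write-up is in fact more explicit than the paper's (which simply says ``the corollary now follows by taking limits''), and you correctly identified the typo $B(p,r)$ for $B(e,r)$ in the statement.
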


\begin{proof}
Let $\{q_n\}_{n=0}^\infty$ be a sequence in $\Gamma$ converging to $\xi$. Let $r_n=d(e,q_n)+T$. Since $\lim_{n\to\infty} d(e,q_n) = +\infty$,  $|r - r_n| \le d(e,q_n)-2c$ for all sufficiently large $n$. So the previous lemma implies the existence of $x_n\in \Gamma$ such that 
$$B\Big(x_n, \frac{r+T}{2}-2c\Big) \subset B(e,r) \cap B(q_n,r_n) \subset B\Big(x_n, \frac{r+T}{2} + 6c+2\delta\Big).$$
Since $x_n \in B(e,r)$ for all $n$ and $\Gamma$ is proper, there exists a limit point $x$ of $\{x_n\}_{n=1}^\infty$ that lies in $B(e,r)$. The corollary now follows by taking limits as $n\to\infty$ and using (\ref{eqn:horo}). 
\end{proof}

\begin{lem}\label{lem:vol1}
There exists a constants $T_0>0$ such that if $T_2\ge T_1$ are such that $T_2-T_1\ge T_0$, $\xi\in \partial \Gamma$ and $r\ge \max(|T_1|, |T_2|)-2c$ then
$$ |B(e,r) \cap h^{-1}_\xi[T_1,T_2] |  \asymp e^{\fh (r+T_2)/2}.$$
where $\fh >0$ is the quasi-conformality constant from Lemma \ref{lem:conformal}.
\end{lem}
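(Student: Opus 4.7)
The plan is to use Corollary \ref{cor:ellipse} to sandwich the set $B(e,r) \cap h_\xi^{-1}[T_1,T_2]$ between two metric balls whose cardinalities are controlled by Lemma \ref{lem:conformal}(1), which gives $|B(g,r')| \asymp e^{\fh r'}$. The key observation is that each superlevel/sublevel set $B(e,r)\cap h_\xi^{-1}(-\infty,T]$ is coarsely a ball of radius $\frac{r+T}{2}$ centered at some point $x$ on a quasi-geodesic from $e$ to $\xi$, up to additive error $O_{\delta,c}(1)$.

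For the upper bound, I would write
\[
|B(e,r)\cap h_\xi^{-1}[T_1,T_2]| \le |B(e,r)\cap h_\xi^{-1}(-\infty,T_2]|
\]
and apply the second inclusion of Corollary \ref{cor:ellipse} with $T=T_2+4\delta$: the left-hand side sits inside a ball of radius $\frac{r+T_2+4\delta}{2}+6c+2\delta$, hence has cardinality $\lesssim e^{\fh(r+T_2)/2}$ by Lemma \ref{lem:conformal}(1). For the lower bound, I would use
\[
|B(e,r)\cap h_\xi^{-1}[T_1,T_2]| \ge |B(e,r)\cap h_\xi^{-1}(-\infty,T_2]| - |B(e,r)\cap h_\xi^{-1}(-\infty,T_1)|.
\]
The first term is bounded below by applying the first inclusion of Corollary \ref{cor:ellipse} with $T=T_2-4\delta$: it contains a ball of radius $\frac{r+T_2-4\delta}{2}-2c$, so by Lemma \ref{lem:conformal}(1) has cardinality $\ge C_1 e^{\fh(r+T_2)/2}$ for some $C_1>0$. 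The second (subtracted) term is bounded above by applying the second inclusion with $T=T_1+4\delta$: it is contained in a ball of radius $\frac{r+T_1+4\delta}{2}+6c+2\delta$, hence has cardinality $\le C_2 e^{\fh(r+T_1)/2}$ for some $C_2>0$.

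Combining these gives
\[
|B(e,r)\cap h_\xi^{-1}[T_1,T_2]| \ge C_1 e^{\fh(r+T_2)/2} - C_2 e^{\fh(r+T_1)/2} = \bigl(C_1 - C_2 e^{-\fh(T_2-T_1)/2}\bigr) e^{\fh(r+T_2)/2}.
\]
Choosing $T_0 := (2/\fh)\log(2C_2/C_1)$, the hypothesis $T_2-T_1\ge T_0$ forces $C_2 e^{-\fh(T_2-T_1)/2}\le C_1/2$, so the lower bound becomes $(C_1/2)e^{\fh(r+T_2)/2}$, as required.

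The only obstacle is a bookkeeping check that the hypothesis $T\le r-2c$ of Corollary \ref{cor:ellipse} is satisfied at each application. The condition $r\ge \max(|T_1|,|T_2|)-2c$ in the statement handles this up to an additive constant depending only on $\delta$ and $c$; if necessary one absorbs the slack by enlarging $T_0$ slightly, which does not affect the conclusion. Everything else is routine application of $|B(x,s)|\asymp e^{\fh s}$ to the two radii above.
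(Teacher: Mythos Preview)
Your proposal is correct and follows essentially the same route as the paper: both arguments combine Corollary \ref{cor:ellipse} with Lemma \ref{lem:conformal}(1) to obtain $|B(e,r)\cap h_\xi^{-1}(-\infty,T]| \asymp e^{\fh(r+T)/2}$, and then take the difference of two such estimates. Your version is in fact more explicit than the paper's about the shifts by $\pm 4\delta$ when invoking the corollary and about the choice of $T_0$; the residual bookkeeping issue you flag (the hypothesis $T\le r-2c$ of Corollary \ref{cor:ellipse} being off by an additive $O(\delta+c)$ from what $r\ge \max(|T_1|,|T_2|)-2c$ provides) is present in the paper's proof as well and is harmless.
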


\begin{proof}
By Corollary \ref{cor:ellipse} and Lemma \ref{lem:conformal}, for any $T$, if $r\ge |T|-2c$ then
$$|B(e,r) \cap h^{-1}_\xi(-\infty,T] | \asymp  e^{\fh (r+T/2)}.$$
So if $r\ge \max(|T_1|, |T_2|)-2c$ then
\begin{eqnarray*}
|B(e,r) \cap h^{-1}_\xi[T_1,T_2] | &\ge & |B(e,r) \cap h^{-1}_\xi(-\infty,T_2] | - |B(e,r) \cap h^{-1}_\xi(-\infty,T_1] |\\
&\gtrsim& e^{\fh (r+T_2/2)}
\end{eqnarray*}
if $T_2-T_1$ is sufficiently large. On the other hand, 
\begin{eqnarray*}
|B(e,r) \cap h^{-1}_\xi[T_1,T_2] | &\le & |B(e,r) \cap h^{-1}_\xi(-\infty,T_2] | - |B(e,r) \cap h^{-1}_\xi(-\infty,T_1-1] |\\
&\lesssim& e^{\fh (r+T_2/2)}
\end{eqnarray*}
if $T_2-T_1$ is sufficiently large.
\end{proof}

{\bf Lemma \ref{lem:vol2}.} {\em
There exist constants $a_0, T_0>0$ such that for every $\xi \in \partial \Gamma$ if $a \ge a_0$, $r\ge \max(|T_1|, |T_2|)-2c$ and $T_2-T_1 \ge T_0$ then
$$ \left|S(e;r-a,r) \cap h_\xi^{-1}[T_1,T_2] \right| \asymp e^{\fh (r+T_2)/2} $$
where $S(e;r-a,r) = \{g \in \Gamma:~ r-a < d(e,g)\le r\}$.}

\begin{proof}
Let $T_0>0$ be as in the previous lemma. The upper bound follows from the previous lemma and the inclusion $S(e;r-a,r) \subset B(e,r)$. Also
\begin{eqnarray*}
 |S(e;r-a,r) \cap h^{-1}_\xi[T_1,T_2] | &= & |B(e,r) \cap h^{-1}_\xi[T_1,T_2] |  -  |B(e,r-a) \cap h^{-1}_\xi[T_1,T_2] |  \\
 &\gtrsim& e^{\fh (r+T_2)/2} - e^{\fh (r+T_2-a)/2} \asymp e^{\fh (r+T_2)/2}
 \end{eqnarray*}
 if $a$ is sufficiently large. 
\end{proof}

\section{Proof of Lemma \ref{lem:Z} }

\begin{lem}\label{lem:only2}
For any $\xi,\xi' \in\partial \Gamma$ and any $g,f \in \Gamma,$
$$2(\xi|g)_f \doteq h_{\xi}(f)+d(g,f) - h_\xi(g).$$
$$2(\xi|\xi')_e + h_\xi(g) + h_{\xi'}(g) \doteq 2(\xi|\xi')_g \doteq 2(g^{-1}\xi|g^{-1}\xi')_e.$$
\end{lem}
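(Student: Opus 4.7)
The plan is to prove all three equalities directly from the definitions, using equation (\ref{eqn:gromovproduct}) to compare Gromov products on $\partial \Gamma$ with limits of Gromov products in $\Gamma$, and using equation (\ref{eqn:horo}) to compare horofunctions with the corresponding quantities along a converging sequence. All three $\doteq$-identities will be seen to be algebraic manipulations of the definition $2(x|y)_z = d(x,z) + d(y,z) - d(x,y)$, which is exact in $\Gamma$; the $\doteq$ is forced only by the boundary extensions.

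For the first identity, I would fix a sequence $\{x_i\} \subset \Gamma$ with $x_i \to \xi$ and write
$$2(x_i|g)_f \;=\; d(x_i,f) + d(g,f) - d(x_i,g) \;=\; d(g,f) + \bigl[d(x_i,f) - d(x_i,e)\bigr] - \bigl[d(x_i,g) - d(x_i,e)\bigr].$$
Taking $\liminf$ as $i \to \infty$ and applying (\ref{eqn:horo}) to the two bracketed expressions replaces them by $h_\xi(f)$ and $h_\xi(g)$ up to additive constants; then (\ref{eqn:gromovproduct}) replaces $\liminf 2(x_i|g)_f$ by $2(\xi|g)_f$ up to an additive constant.

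For the first half of the second identity, I would fix sequences $x_i \to \xi$, $y_i \to \xi'$ and add and subtract $d(x_i,e) + d(y_i,e)$ in the identity $2(x_i|y_i)_g = d(x_i,g) + d(y_i,g) - d(x_i,y_i)$, which yields
$$2(x_i|y_i)_g \;=\; \bigl[d(x_i,g) - d(x_i,e)\bigr] + \bigl[d(y_i,g) - d(y_i,e)\bigr] + 2(x_i|y_i)_e.$$
Again I pass to $\liminf$ and apply (\ref{eqn:horo}) and (\ref{eqn:gromovproduct}) to identify each piece with $h_\xi(g)$, $h_{\xi'}(g)$, and $2(\xi|\xi')_e$ respectively, while the left-hand side is compared to $2(\xi|\xi')_g$ via (\ref{eqn:gromovproduct}).

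For the second half, left-invariance of $d$ gives the \emph{exact} equality $(x|y)_g = (g^{-1}x|g^{-1}y)_e$ for $x,y \in \Gamma$; applying this along the sequences $x_i \to \xi$, $y_i \to \xi'$ (so that $g^{-1}x_i \to g^{-1}\xi$, $g^{-1}y_i \to g^{-1}\xi'$) and invoking (\ref{eqn:gromovproduct}) twice yields the $\doteq$ between $2(\xi|\xi')_g$ and $2(g^{-1}\xi|g^{-1}\xi')_e$. No real obstacle arises: the only care needed is the bookkeeping of the additive $\delta$- and $c$-dependent constants coming from (\ref{eqn:horo}) and (\ref{eqn:gromovproduct}), which is precisely the content of the convention $\doteq$.
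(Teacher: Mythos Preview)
Your proposal is correct and follows essentially the same approach as the paper's own proof: both reduce each $\doteq$-identity to the exact Gromov-product identity in $\Gamma$, add and subtract $d(x_i,e)$ (and $d(y_i,e)$) to produce horofunction-type expressions, and then invoke (\ref{eqn:gromovproduct}) and (\ref{eqn:horo}) to pass to the boundary. The only cosmetic difference is that the paper writes the second identity with a double limit $\liminf_{n,m\to\infty}$ over two independent sequences, whereas you use a single index; either is justified by (\ref{eqn:gromovproduct}).
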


\begin{proof}
Let $x_i$ be a sequence in $\Gamma$ converging to $\xi$. By (\ref{eqn:gromovproduct}) and (\ref{eqn:horo}),
\begin{eqnarray*}
2(\xi|g)_f &\doteq&  \liminf_{n\to\infty} 2(x_n|g)_f =  \liminf_{n\to\infty} d(x_n,f)+d(g,f)-d(x_n,g)\\
&=&  \liminf_{n\to\infty} d(x_n,f)-d(x_n,e)+d(g,f)+d(x_n,e)-d(x_n,g) \doteq  h_{\xi}(f)-h_\xi(g)+d(g,f).
\end{eqnarray*}
Similarly, if $x'_i$ is a sequence in $\Gamma$ converging to $\xi'$ then
\begin{eqnarray*}
2(\xi|\xi')_g &\doteq& \liminf_{n,m\to\infty} 2(x_n|x_m')_g = \liminf_{n,m\to\infty} d(x_n,g)+d(x'_m,g)-d(x_n,x'_m)\\
&=&  \liminf_{n,m\to\infty} d(g^{-1}x_n,e)+d(g^{-1}x'_m,e)-d(g^{-1}x_n,g^{-1}x'_m) \doteq 2(g^{-1}\xi|g^{-1}\xi')_e.
\end{eqnarray*}
Similarly,
 \begin{eqnarray*}
2(\xi|\xi')_g &\doteq& \liminf_{n,m\to\infty} 2(x_n|x_m')_g =  \liminf_{n,m\to\infty} d(x_n,g)+d(x'_m,g)-d(x_n,x'_m)\\
&=&  \liminf_{n,m\to\infty} d(x_n,g) - d(x_n,e)+d(x'_m,g)-d(x'_m,e) +d(x_n,e) + d(x'_m,e) - d(x_n,x'_m)\\
&\doteq& h_\xi(g) + h_{\xi'}(g) + 2(\xi|\xi')_e.
\end{eqnarray*}
\end{proof}


{\bf Lemma \ref{lem:Z}}. {\em
If $\Upsilon_n(g,\xi,\xi')>0$ and $\rho$ is sufficiently large (depending on $(\Gamma,d)$ and $\nu$) then
$$h_{g^{-1}\xi'}(g^{-1}) \gtrdot_\rho 0, \quad h_{g^{-1}\xi}(g^{-1}) \gtrdot_\rho 0, \quad h_{\xi'}(g) \in (-5\rho, \rho), \quad (\xi|\xi')_e \ge n - 5\rho, \quad (g^{-1}\xi|g^{-1}\xi')_e \ge n-10\rho.$$}
\begin{proof}
Because $\Upsilon_n(g,\xi,\xi')>0$, we have
$$d(e,g) \in (2n-2\rho, 2n),  \quad h_\xi(g) \in (2\rho,4\rho), \quad (\xi'|g)_e \in (n,n+\rho).$$
By Lemma \ref{lem:only2}
$$2(\xi'|g)_e \doteq d(e,g) - h_{\xi'}(g).$$
So 
$$-4\rho \lessdot h_{\xi'}(g)  \lessdot 0.$$
So $h_{\xi'}(g) \in (-5\rho, \rho)$ (assuming $\rho$ is sufficiently large). 

Let $\{x_n\}$ be a sequence in $\Gamma$ converging to $\xi'$. By (\ref{eqn:horo}), 
$$h_{g^{-1}\xi'}(g^{-1})\doteq \liminf_{n\to\infty} d(g^{-1}x_n,g^{-1}) - d(g^{-1}x_n,e) =  \liminf_{n\to\infty} d(x_n,e) - d(x_n,g) \doteq -h_{\xi'}(g) \ge -\rho.$$
This shows $h_{g^{-1}\xi'}(g^{-1}) \gtrdot_\rho 0$. The proof that $h_{g^{-1}\xi}(g^{-1}) \gtrdot_\rho 0$ is similar.

By Lemma \ref{lem:only2} $2(\xi|g)_e \doteq d(e,g) - h_\xi(g)$. So
$$n - 3 \rho \lessdot (\xi|g)_e \lessdot n-\rho.$$
By (\ref{eqn:gromov}), 
$$(\xi|\xi')_e \gtrdot \min\{ (\xi|g)_e, (\xi'|g)_e\} \gtrdot n-3\rho.$$
This proves $(\xi|\xi')_e \ge n - 5\rho$ (for sufficiently large $\rho$). To prove the last inequality, observe that by Lemma \ref{lem:only2},
$$2(g^{-1}\xi|g^{-1}\xi')_e \doteq 2(\xi|\xi')_e + h_\xi(g) + h_{\xi'}(g).$$
So the previous inequalities imply $(g^{-1}\xi|g^{-1}\xi')_e \ge n-10\rho$ if $\rho$ is sufficiently large.
\end{proof}

\section{Proof of Lemma \ref{lem:Y}}

\begin{lem}\label{lem:exist0}
For any $x \in \Gamma \cup \partial \Gamma$ there exists a $\xi \in \partial \Gamma$ with $(x|\xi)_e \doteq 0$.
\end{lem}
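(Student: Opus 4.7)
The strategy is to exploit the non-elementarity of $\Gamma$ by fixing two distinct reference boundary points and then applying the four-point hyperbolicity inequality to show that $x$ cannot have large Gromov product with both of them simultaneously.

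Since $(\Gamma,d)$ is a non-elementary hyperbolic group, $\partial \Gamma$ is infinite; in particular one may fix two distinct points $\eta_1,\eta_2 \in \partial \Gamma$. Let
$$D := (\eta_1|\eta_2)_e,$$
which is a finite constant depending only on $(\Gamma,d)$ and the fixed pair $\eta_1,\eta_2$, since $\eta_1 \ne \eta_2$. For any $x \in \Gamma \cup \partial \Gamma$, apply inequality (\ref{eqn:gromov}) with the triple $\eta_1,\eta_2,x$ and basepoint $e$, extended to the boundary as noted after (\ref{eqn:gromovproduct}) (via \cite[Proposition 5.12]{Va05}):
$$D \;=\; (\eta_1|\eta_2)_e \;\gtrdot\; \min\bigl\{(x|\eta_1)_e,\,(x|\eta_2)_e\bigr\}.$$
Let $\xi \in \{\eta_1,\eta_2\}$ be the one that attains this minimum. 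Then $(x|\xi)_e \lessdot 0$.

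For the reverse bound, the Gromov product is manifestly nonnegative: for $y,z \in \Gamma$ the triangle inequality gives $(y|z)_e \ge 0$, and this passes to the extended definition $\inf\liminf_{i\to\infty}(y_i|z_i)_e$ when $y$ or $z$ is in $\partial \Gamma$. Hence $(x|\xi)_e \ge 0 \gtrdot 0$. Combining the two directions yields $(x|\xi)_e \doteq 0$, as required.

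\textbf{Where the care is needed.} The proof is essentially a one-step application of hyperbolicity, and the only subtle point is justifying (\ref{eqn:gromov}) when three of the four points lie on $\partial \Gamma$. This is exactly the content of \cite[Proposition 5.12]{Va05} (cited in the paper just after (\ref{eqn:gromovproduct})); even if the additive constant that appears in the extended inequality is larger than $\delta$, it still depends only on $(\Gamma,d)$, which is precisely what $\gtrdot$ permits, so the argument goes through unchanged.
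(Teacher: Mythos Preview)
Your proof is correct and is essentially the same argument as the paper's: both use non-elementarity to get (at least) two distinct boundary points and then apply the extended four-point inequality (\ref{eqn:gromov}). The only cosmetic difference is that the paper argues by contradiction (if no uniform bound exists, the inequality forces $(\xi|\xi')_e=+\infty$ for all $\xi,\xi'$, collapsing $\partial\Gamma$ to a point), whereas you give the direct contrapositive by fixing $\eta_1,\eta_2$ in advance.
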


\begin{proof}
If the lemma is false then for every $n$ there exists $x_n \in \Gamma \cup \partial \Gamma$ such that $(\xi|x_n)_e \ge n$ for all $\xi \in \partial \Gamma$. By (\ref{eqn:gromov}), for any $\xi,\xi' \in \partial \Gamma$, 
$$(\xi | \xi')_e \gtrdot \min \{ (\xi|x_n)_e, (\xi'|x_n)_e \}\ge n$$
which implies $(\xi | \xi')_e = +\infty$ for all $\xi, \xi' \in \partial \Gamma$. But $(\xi | \xi')_e = +\infty$ implies $\xi = \xi'$ so $\partial \Gamma$ consists of only a single point. This contradicts our assumption that $\Gamma$ is non-elementary and therefore the boundary is infinite.
\end{proof}

\begin{lem}\label{lem:exist1}
There exists a constant $C>0$ such that if $g \in \Gamma, r>0$ and $r +C < d(e,g) $ then there exists $\xi''\in \partial \Gamma$ with $(\xi'' | g)_e \doteq r$.
\end{lem}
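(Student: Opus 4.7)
The plan is to find a boundary point $\xi''$ whose direction from $e$ tracks that of $g$ for approximately distance $r$ before diverging. I proceed in three steps.

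First, I will establish the existence of a boundary point $\xi_g \in \partial\Gamma$ lying in ``$g$'s direction'' from $e$, meaning $(g|\xi_g)_e \doteq d(e,g)$. To build $\xi_g$, I apply Lemma \ref{lem:exist0} to $g^{-1}$ to obtain $\zeta \in \partial\Gamma$ with $(g^{-1}|\zeta)_e \doteq 0$, and choose any sequence $\{g_n'\}\subset\Gamma$ converging to $\zeta$. By (\ref{eqn:gromovproduct}), $(g^{-1}|g_n')_e \doteq 0$ for all large $n$. Setting $h_n := g g_n'$ and expanding the Gromov products using left-invariance of $d$ yields $(g|h_n)_e \doteq d(e,g)$ as well as $(h_n|h_m)_e \to +\infty$, so $\{h_n\}$ is a Gromov sequence. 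Any subsequential limit $\xi_g \in \partial\Gamma$ satisfies $(g|\xi_g)_e \doteq d(e,g)$ by (\ref{eqn:gromovproduct}).

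Second, I apply Lemma \ref{lem:conformal}(2), which yields $\nu(\{\xi'' : (\xi_g|\xi'')_e \ge m\}) \asymp e^{-\fh m}$. For a constant $K>0$ chosen sufficiently large depending only on $\fh$ and the implicit constants in $\asymp$, the annular set $\{\xi'' : (\xi_g|\xi'')_e \in [r, r+K]\}$ has positive $\nu$-measure and is therefore nonempty. Picking any $\xi''$ in this set produces $(\xi_g|\xi'')_e \doteq r$.

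Finally, I apply the hyperbolic four-point inequality (\ref{eqn:gromov}), extended to boundary points as described in Section \ref{sec:Gromov}, twice. The first application gives $(g|\xi'')_e \gtrdot \min\{(g|\xi_g)_e,(\xi_g|\xi'')_e\} \doteq \min(d(e,g), r) = r$. The second, $(\xi_g|\xi'')_e \gtrdot \min\{(\xi_g|g)_e, (g|\xi'')_e\}$, combined with $(\xi_g|g)_e \doteq d(e,g)$ and $(\xi_g|\xi'')_e \doteq r$, forces the minimum to be $(g|\xi'')_e$ once the constant $C$ in the hypothesis $r+C<d(e,g)$ is taken larger than the additive errors hidden in $\doteq$; this gives $(g|\xi'')_e \lessdot r$. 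Combining the two bounds yields $(\xi''|g)_e \doteq r$, as required.

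The main obstacle is the first step: the existence of a boundary point $\xi_g$ ``close to $g$'' in direction is not an immediate consequence of any previously stated lemma, and requires pairing Lemma \ref{lem:exist0} with the translation trick $h_n := gg_n'$ and a Gromov-sequence computation. The remaining steps are routine manipulations of the quasi-conformality estimate and the four-point condition, with $C$ chosen to absorb all additive errors arising in $\doteq$.
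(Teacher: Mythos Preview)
Your proof is correct. The first step coincides with the paper's opening move: the paper also applies Lemma \ref{lem:exist0} to $g^{-1}$ to get $\xi'$ with $(\xi'|g^{-1})_e\doteq 0$ and then works with $g\xi'$, which is exactly your $\xi_g$ (you simply unpack the action of $g$ on the boundary via sequences rather than invoking it directly). The final step, applying the four-point inequality (\ref{eqn:gromov}) twice with the separation hypothesis $r+C<d(e,g)$, is also the same in both arguments.

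Where you genuinely diverge is in producing a boundary point at prescribed Gromov product $r$ from $\xi_g$. The paper stays purely metric: it runs a $(1,c)$-quasi-geodesic from $e$ to $g\xi'$, takes the point $f$ at parameter $r$, applies Lemma \ref{lem:exist0} a second time at $f$ to get a direction $\xi$ diverging from $f^{-1}g\xi'$, and then shows $(f\xi|g\xi')_e\doteq r$ via horofunction estimates. You instead invoke the measure estimate of Lemma \ref{lem:conformal}(2) to see that the shell $\{\xi'':(\xi_g|\xi'')_e\in[r,r+K]\}$ has positive $\nu$-measure for $K$ large enough, hence is nonempty. Your route is shorter and avoids both the quasi-geodesic construction and the second call to Lemma \ref{lem:exist0}; the price is that it imports the quasi-conformal measure into what the paper treats as a purely coarse-geometric statement. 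Since $\nu$ is already fixed throughout \S\ref{sec:automatic}, this is harmless here.
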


\begin{proof}
By the previous lemma there exists an element $\xi' \in \partial \Gamma$ with $(\xi' | g^{-1})_e \doteq 0$. By Lemma \ref{lem:only2},
$$0 \doteq 2(\xi' | g^{-1})_e \doteq d(e,g) - h_{\xi'}(g^{-1}).$$
By another application of Lemma \ref{lem:only2} and (\ref{equation}) this implies
\begin{eqnarray}\label{eqn:g}
2(g\xi' | g)_e \doteq d(e,g) - h_{g\xi'}(g) \doteq d(e,g) + h_{\xi'}(g^{-1}) \doteq 2d(e,g).
\end{eqnarray}

Let $\gamma:[0,\infty] \to \overline{\Gamma}$ be a $(1,c)$-quasi-geodesic with $\gamma(0)=e$ and $\gamma(\infty) =g\xi'$. Let $f= \gamma(r)$. So $d(e,f)\doteq r$. By (\ref{eqn:horo})
\begin{eqnarray*}
h_{g\xi'}(f) &\doteq & \liminf_{t\to\infty} d(\gamma(t), f) - d(\gamma(t),e) =\liminf_{t\to\infty} d(\gamma(t), \gamma(r)) - d(\gamma(t),e)\\
&\doteq& \liminf_{t\to\infty} t-r -t = -r.
\end{eqnarray*}
By Lemma \ref{lem:only2},
$$(f|g\xi')_e \doteq (1/2)( d(e,f) - h_{g\xi'}(f) ) \doteq r.$$
By Lemma \ref{lem:exist0} there exists an element $\xi \in \partial \Gamma$ with $(\xi | f^{-1}g\xi')_e \doteq 0$. We will show that $\xi'' := f\xi$ satisfies the lemma.

Since $|h_{f\xi}(f)| \le d(e,f)$,
\begin{eqnarray*}
(f\xi|f)_e \doteq (1/2)( d(e,f) - h_{f\xi}(f) ) \le d(e,f) \doteq r.
\end{eqnarray*}
By (\ref{eqn:gromov}), 
$$(f\xi | g\xi')_e \gtrdot \min \{ (f\xi | f)_e, (f | g\xi')_e \} \gtrdot r.$$
An application of Lemma \ref{lem:only2} and $(\xi | f^{-1}g\xi')_e \doteq 0$ yields
\begin{eqnarray}\label{eqn:thing2}
2r \lessdot 2(f \xi | g\xi')_e \doteq 2(\xi | f^{-1}g\xi')_e - h_{f\xi}(f) - h_{g\xi'}(f) \doteq  - h_{f\xi}(f) +r.
\end{eqnarray}
So $h_{f\xi}(f) \lessdot -r$. Since $h_{f\xi}(f) \ge - d(e,f) \doteq -r$ we have $h_{f\xi}(f) \doteq -r$. So (\ref{eqn:thing2}) implies
$$(f\xi | g\xi')_e \doteq r.$$
Since $(g\xi' | g)_e \doteq d(e,g)$ (by (\ref{eqn:g})), there is a constant $C>0$ so that if $d(e,g) > r+C$ then $(f\xi | g\xi')_e \le (g\xi' | g)_e$. Assuming this, we have by (\ref{eqn:gromov}),
$$(f\xi | g)_e \gtrdot \min \{ (f\xi| g\xi')_e, (g\xi' | g)_e \} = (f\xi | g\xi')_e \doteq r.$$
Also, 
$$r \doteq (f\xi| g\xi')_e\gtrdot \min \{(f\xi | g)_e  , (g\xi' | g)_e \}.$$
Since $r +C\le d(e,g) \doteq (g\xi' | g)_e$, we must have that if $C>0$ is large enough then $r \gtrdot (f\xi | g)_e$. As we have already shown the opposite inequality, we now have $r \doteq (f\xi | g)_e$. This proves the lemma by setting $\xi'':=f\xi$. 
\end{proof}

{\bf Lemma \ref{lem:Y}}. {\em
If $\rho$ is sufficiently large (dependent only on $(\Gamma,d)$ and $\nu$) then for any $g\in \Gamma$ with $d(e,g) \in (2n-2\rho,2n)$, $\nu(Z_n(g)) \asymp_\rho e^{-\fh n}$. }

\begin{proof}

Let $\xi,\xi' \in Z_n(g)$ (i.e., $(\xi|g)_e, (\xi'|g)_e \in (n,n+\rho)$). By (\ref{eqn:gromov}),
$$(\xi|\xi')_e \gtrdot \min\{ (\xi|g)_e, (\xi'|g)_e\} >n.$$
So, fixing $\xi$, we see that $Z_n(g) \subset \{\xi':~ (\xi|\xi')_e \ge n-\rho\}$ if $\rho$ is sufficiently large. By Lemma \ref{lem:conformal}, the later set has measure $\lesssim_\rho e^{-\fh n}$. This shows $\nu(Z_n(g)) \lesssim_\rho e^{-\fh n}$. 

To prove the opposite inequality, by Lemma \ref{lem:exist1}, there exists an element $\xi'\in \partial \Gamma$ satisfying $n+\rho/3 \le (\xi'|g)_e \le n+\rho/2$ (if $\rho$ is sufficiently large). Let $\xi'' \in \partial \Gamma$ be such that $(\xi' | \xi'') \ge n+\rho$. By (\ref{eqn:gromov}),
$$(\xi'' | g)_e \gtrdot \min\{ (\xi' | \xi'')_e, (\xi' | g)_e \} \ge n + \rho/3.$$
Also,
$$(\xi'|g)_e \gtrdot \min\{ (\xi'' | g)_e, (\xi'' |\xi')_e \}.$$
If $\rho$ is sufficiently large, then it cannot be true that $n+\rho/2 \ge (\xi'|g)_e \gtrdot (\xi''|\xi')_e \ge n+\rho$. So we must have that 
$$n+\rho/2 \ge (\xi'|g)_e \gtrdot (\xi'' |g)_e.$$
We have now shown that
$$n+\rho/2 \gtrdot (\xi''|g)_e \gtrdot n+\rho/3.$$
It follows that if $\rho$ is large enough then $(\xi''|g)_e \in (n,n+\rho)$, i.e., $\xi'' \in Z_n(g)$. Thus
$$Z_n(g) \supset \{\xi''\in\partial \Gamma:~ (\xi''|\xi')_e \ge n+\rho\}.$$
It follows from Lemma \ref{lem:conformal} that $\nu(Z_n(g)) \gtrsim_\rho e^{-\fh n}$. This proves the opposite inequality.
 
\end{proof}

{\small

}

\end{document}